\newtheorem*{theorem*}{Theorem}
\newtheorem{theorem}{Theorem}
\newtheorem{lemma}[theorem]{Lemma}
\newtheorem{prop}[theorem]{Proposition}
\newtheorem{cor}[theorem]{Corollary}
\newtheorem*{thmA}{Theorem A}
\newtheorem*{thmB}{Theorem B}
\newtheorem*{thmC}{Theorem C}
\newtheoremstyle{myremark}{}{9pt}{\upshape}{}{\bfseries}{.}{ }{}
\theoremstyle{myremark}
\newtheorem{remark}[theorem]{Remark}
\theoremstyle{definition}
\newtheorem{defn}[theorem]{Definition}
\newcommand{\R}{\mathbb{R}}
\newcommand{\N}{\mathbb{N}}
\newcommand{\Z}{\mathbb{Z}}
\newcommand{\diff}{d}
\newcommand{\vol}{\mathrm{vol}\,}
\newcommand{\Oo}{\mathcal{O}}
\newcommand{\Or}{\mathrm{O}}
\newcommand{\To}{\rightarrow}
\newcommand{\Orb}{\mathcal{O}}
\definecolor{figureblue}{RGB}{0,0,100} 
\definecolor{figurered}{RGB}{200,0,0}
\title[Sharp systolic inequalities for rotationally symmetric $2$-orbifolds]{Sharp systolic inequalities for \\ rotationally symmetric $2$-orbifolds}
\author[C. Lange]{Christian Lange}
\address{Christian Lange\newline\indent Mathematical Institute of the University of Munich\newline\indent Theresienstr. 39, D-80333 Munich, Germany}
\email{lange@math.lmu.de, clange.math@gmail.com}
\author[T. Soethe]{Tobias Soethe}
\address{Tobias Soethe\newline\indent Ruhr-Universit\"at Bochum, Fakult\"at f\"ur Mathematik\newline\indent Universitätsstr. 150, D-44780 Bochum, Germany}
\email{tobias.soethe@ruhr-uni-bochum.de}
\subjclass{53C22, 57R18, 55R55}
\begin{document}

\begin{abstract} We show that suitably defined systolic ratios are globally bounded from above on the space of rotationally symmetric spindle orbifolds and that the upper bound is attained precisely at so-called Besse metrics, i.e. Riemannian orbifold metrics all of whose geodesics are closed.
\end{abstract}\maketitle	

\section{Introduction}\label{sec:Introduction}

Systolic geometry studies the relation between the length of shortest closed geodesics and the volume of the ambient space. The systole of a Riemannian $2$-sphere is defined as the length of its shortest nontrivial closed geodesic. Its systolic ratio is the square of the systole divided by the sphere's area. This ratio can become arbitrarily small. On the other hand, it is globally bounded from above due to a result of Croke \cite{croke88}. The optimal upper bound is conjectured to be $2\sqrt 3$, which is approached for a sequence of spheres converging to the so-called Calabi-Croke singular sphere, two copies of  an equilateral triangle glued together along their boundary \cite{croke88}.

A Riemannian metric on $S^2$ is called Zoll if all its prime geodesics are closed and have the same length. The round metric on $S^2$ is one example of such a Zoll metric, but the space of Zoll metrics on $S^2$ is in fact infinite-dimensional. Already in 1908 an infinite family of rotationally symmetric Zoll metrics on $S^2$ was constructed by Zoll \cite{Zoll:1908}. The systolic ratio of any Zoll metric on $S^2$ is $\pi$, see e.g. \cite{abbondandolo17}. In \cite{abbondandolo17} Abbondandolo et al. show that for spheres with positive- and sufficiently pinched curvature the systolic ratio is bounded from above by $\pi$, and that the upper bound is attained if and only if the sphere is Zoll. For general Riemannian spheres they obtain this conclusion in a $C^3$-neighborhood of any Zoll metric on $S^2$ \cite{abbondandolo18_2}. Later, they proved that on spheres of revolution the systolic ratio is bounded from above by $\pi$ with equality if and only if the metric is Zoll \cite{abbondandolo18}.

A notion less restrictive than Zoll is that of Besse metric or flow. A (geodesic) flow (and its defining metric) is called Besse, if all its orbits are periodic. This definition does not impose constraints on the periods. However, it implies the existence of a common period under very general assumptions, for instance for Reeb flows \cite{Sullivan:1978}. A conjecture of Berger states that every Besse Riemannian metric on a simply connected manifold is Zoll. This conjecture was confirmed for $S^2$ by Gromoll and Grove \cite{gromoll81} and for $S^n$ with $n\ge 4$ by Radeschi and Wilking \cite{radeschi17}. On the other hand, there are simply connected orbifolds that admit Besse metrics which are not Zoll. For instance, such Besse orbifolds can be constructed as surfaces of revolution homeomorphic to $S^2$ with two cyclic orbifold singularities \cite{besse78}. We call such orbifolds \emph{spindles} and denote them as $S^2(m,n)$, where $m$ and $n$ are the order of the two cyclic singularities. In this case one can show that the shortest closed geodesic is an equator and that any other prime closed geodesic is $\frac{m+n}{2-\alpha}$-times longer than this equator, with $\alpha=0$ for $m+n$ even and $\alpha=1$ for $m+n$ odd. This is a special case of a more general rigidity result about the length spectrum of a Besse $2$-orbifold \cite{lange20}.

One might expect that Besse metrics can be characterized as global maximizers of the systolic ratio among rotationally symmetric $2$-orbifolds. However, this naive generalization fails, see Section 3.\ref{sec:surfacerevolution}. Nevertheless, we are able to obtain a generalization as follows. The unit tangent bundle $T^1\Oo$ of a spindle orbifold $\Oo= S^2(m,n)$ is a smooth $3$-manifold diffeomorphic to a lens space of type $L(m+n,1)$ \cite{lange20}, and as such it has fundamental group $\Z_{m+n}$. We define the contractible systolic ratio as the quantity
 \begin{align*}
  \rho_\text{contr}(\Oo) = \frac{\ell_\text{min,contr}^2}{\mathrm{area}(\Oo)} \, ,
 \end{align*}
 where $\ell_\text{min,contr}$ denotes the length of the shortest closed geodesic whose lift to the unit tangent bundle is contractible. For the contractible systolic ratio we prove the following result.

\begin{thmA}\label{thm:A}
 Let $\Oo=S^2(m,n)$ be a rotationally symmetric spindle orbifold. Then the contractible systolic ratio is bounded from above by
 \begin{align*}
  \rho_{\mathrm{contr}}(\Oo) \le 2(m+n)\pi \, .
 \end{align*}
 Moreover, the upper bound is attained if and only if $\Oo$ is Besse.
\end{thmA}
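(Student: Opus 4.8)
The plan is to reformulate the statement as a contact-systolic inequality on the universal cover of $T^1\Oo$, and then to exploit the rotational symmetry to reduce it to an explicit one-parameter inequality. First I would pass to the universal cover $p\colon S^3\to T^1\Oo=L(m+n,1)$ (of degree $m+n$) and pull back the Liouville contact form $\lambda$ to $\tilde\lambda=p^*\lambda$, whose Reeb flow covers the geodesic flow. A closed geodesic of $\Oo$ has contractible lift to $T^1\Oo$ exactly when that lift lifts further to a \emph{closed} Reeb orbit of $\tilde\lambda$ on $S^3$; conversely every closed $\tilde\lambda$-orbit projects to such a geodesic, its period equalling the traversed length. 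Hence $\ell_{\mathrm{min,contr}}(\Oo)=T_{\min}(\tilde\lambda)$, the minimal period of the Reeb flow of $\tilde\lambda$. Since $\int_{T^1\Oo}\lambda\wedge d\lambda=2\pi\,\mathrm{area}(\Oo)$, pulling back gives $\mathrm{area}(\Oo)=\mathrm{vol}(\tilde\lambda)/\big(2\pi(m+n)\big)$ with $\mathrm{vol}(\tilde\lambda)=\int_{S^3}\tilde\lambda\wedge d\tilde\lambda$. Therefore
\begin{align*}
\rho_{\mathrm{contr}}(\Oo)=2\pi(m+n)\,\frac{T_{\min}(\tilde\lambda)^2}{\mathrm{vol}(\tilde\lambda)},
\end{align*}
and the theorem becomes the sharp contact-systolic inequality $T_{\min}(\tilde\lambda)^2\le\mathrm{vol}(\tilde\lambda)$ on $S^3$, with equality iff $\tilde\lambda$ is Zoll. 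As a consistency check, the round $S^2=S^2(1,1)$ gives $\ell_{\mathrm{min,contr}}=2\cdot 2\pi=4\pi$ and $\rho_{\mathrm{contr}}=4\pi=2(m+n)\pi$, attained as it should be.

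The rotational symmetry is what makes this tractable. The $S^1$-action on $\Oo$ lifts to an $S^1$-action on $S^3$ commuting with the Reeb flow of $\tilde\lambda$, so this Reeb flow is completely integrable, the second integral being the Clairaut invariant $c$. Away from the exceptional fibres the phase space is then foliated by invariant $2$-tori $\{c=\mathrm{const}\}$, on each of which the flow is linear, and I would introduce action–angle coordinates in which both $\mathrm{vol}(\tilde\lambda)$ and the periods of closed orbits are encoded by a single planar curve $c\mapsto(I_1(c),I_2(c))$ in the plane of action variables. Concretely, writing the metric as a surface of revolution with profile $\rho$, the angular advance per radial oscillation is the rotation function $\Theta(c)=\int\frac{c\,dr}{\rho\sqrt{\rho^2-c^2}}$, which together with the cone orders $m,n$ — entering through the behaviour of the meridians $c=0$ at the two singular fibres — determines the slope of the action curve and hence exactly which tori carry closed orbits and of what period.

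The heart of the argument is the resulting sharp planar inequality. In these coordinates $\mathrm{vol}(\tilde\lambda)$ equals a universal constant times the symplectic area enclosed by the action curve, while the closed Reeb orbits are those on tori whose rotation vector is rational, their periods being the corresponding actions; $T_{\min}(\tilde\lambda)$ is the least of these, and $T_{\min}^2\le\mathrm{vol}$ reduces to a convexity/monotonicity estimate for $\Theta(c)$. I expect this step to be the main obstacle: proving the bound uniformly over all admissible profiles, and correctly incorporating the two exceptional fibres (the equator $c=\rho_{\max}$, where the torus collapses to a single short orbit, and the cone-point meridians $c=0$, where the cone orders enter), is exactly where the global — rather than merely local, near-Zoll — nature of the inequality must be established and the symmetry fully exploited.

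Finally I would analyse equality. Tracing the planar inequality back, equality forces $\Theta(c)$ to be constant, i.e. the action curve to be a straight segment and the rotation vector to be the same on every invariant torus; this is precisely the condition that all orbits of $\tilde\lambda$ close up simultaneously, so $\tilde\lambda$ is Zoll. Projecting, this says that the geodesic flow on $T^1\Oo$ is periodic, i.e. that $\Oo$ is Besse, and conversely a Besse metric yields a Zoll $\tilde\lambda$ attaining the bound (with common period $(m+n)\,\ell_{\mathrm{eq}}$). Combined with the reformulation of the first paragraph, this gives $\rho_{\mathrm{contr}}(\Oo)\le 2(m+n)\pi$ with equality exactly for Besse metrics.
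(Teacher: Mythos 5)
Your first paragraph is correct, and in fact the paper itself records this reformulation: the identity $\rho_{\mathrm{contr}}(S^2(m,n)) = 2\pi(m+n)\,\rho_{\mathrm{sys}}(S^3,\alpha_\phi)$ appears there (with the same covering and volume bookkeeping), and your consistency check for $S^2(1,1)$ is right. But the reformulation does no real work, because the inequality you reduce to --- $T_{\min}(\tilde\lambda)^2 \le \vol(\tilde\lambda)$ for contact forms on $S^3$ with equality iff Zoll --- is \emph{false} as a general statement: the systolic ratio of contact forms on $S^3$ is unbounded, and the known Zoll results are only about local maximality in the $C^3$-topology. So the entire content of Theorem A is to prove this inequality for the special class of forms lifted from rotationally symmetric spindle metrics, which is exactly the original problem transplanted upstairs. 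That is the step you flag as ``the main obstacle,'' and it is left unproven; moreover, your proposed substitute --- ``a convexity/monotonicity estimate for $\Theta(c)$'' --- is not the right statement. For a non-Besse metric the rotation function need not be monotone or convex, and no such estimate holds uniformly over profiles; sharpness cannot be extracted from it.

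What actually closes the gap (and what the paper does) is quantitative and elementary, not a convexity statement. One works on the Birkhoff annulus $A$ of an equator of \emph{minimal} length $L$ and with the generating function $F$ of the first return map (your ``action variable''): (i) critical points $\eta_0$ of $F$ correspond to closed geodesics of length $2F(\eta_0)$ whose lift to $T^1\Oo$ is contractible, via the winding-number identity $F'(\eta)=LW(\eta)\mp\frac{m+n}{2}L$, which is where the cone orders enter; (ii) an integral formula gives the pointwise bound $F(\eta)>\frac{m+n}{2}L|\eta|$; (iii) the contact volume satisfies $\vol(T^1\Oo)\ge 4L\int_0^1 F(\eta)\,d\eta-(m+n)L^2$; (iv) a dichotomy: either the $(m+n)$-fold iterate of the equator already satisfies the strict bound, or $(m+n)^2L^2\ge(m+n)\vol(T^1\Oo)$, which with (iii) forces $\int_0^1 F\le\frac{m+n}{2}L$, hence (using (ii) and $F$ non-constant, which is equivalent to non-Besse) an \emph{interior} minimum $\mu=F(\eta_0)<\frac{m+n}{2}L$; then integrating the pointwise estimate $F(\eta)\ge\max\bigl\{\mu,\tfrac{m+n}{2}L\eta\bigr\}$ (strict at one point by smoothness) yields $\vol(T^1\Oo)>\frac{4}{m+n}\mu^2$, i.e.\ the geodesic at $\eta_0$ beats the bound. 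The equality case additionally needs the statement that $F$ is constant iff $\Oo$ is Besse, plus the computation of the ratio for Besse metrics from the explicit classification. None of these steps, nor equivalents of them, appear in your proposal, so as it stands the proof has a genuine hole precisely at its core.
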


Theorem A constitutes a new result even in the smooth case: On smooth rotationally symmetric spheres the Zoll metrics are global maximizers of the contractible systolic ratio. We point out that, in contrast to the ordinary systolic ratio, no other (not necessarily rotationally symmetric) metrics with higher contractible systolic ratio on $S^2$ are known. For the contractible systolic ratio on $S^2$ Zoll metrics are conjectured to be global maximizers, cf. the discussion after Corollary 4 in \cite{abbondandolo18_2} and the references therein.

The following generalization of the contractible systolic ratio was suggested to us by P. A. S. Salom\~{a}o: For a divisor $k$ of $(m+n)$ we consider the quantity
 \begin{align*}
  \rho_{\mathrm{contr},k}(\Oo) = \frac{\ell_\text{min,k}^2}{\mathrm{area}(\Oo)} \, ,
 \end{align*}
 where $\ell_\text{min,k}$ denotes the length of the shortest closed geodesic whose lift to the unit tangent bundle $T^1\Oo$ represents an element in the subgroup of $\pi_1(T^1\Oo)$ of order $k$. We remark that $\rho_\text{contr}=\rho_{\text{contr},1}$.
  We can consider the geodesic flow as a Reeb flow on the unit tangent bundle. Then for a suitable covering of $L(m+n,1)$, the systolic ratio $\rho_{\text{contr},k}$ coincides with the standard systolic ratio of the lifted Reeb flow up to a multiplicative constant. For Besse spindle orbifolds the only such lifts that are Zoll are the lifts to the universal covering $S^3$ and, if $n+m$ is even, to the $\frac{m+n}{2}$-fold covering $L(2,1)$.
	
We obtain the following systolic inequality, which for $m=n=1$ recovers the result about the standard systolic ratio on spheres of revolution in \cite{abbondandolo18}.

 \begin{thmB} \label{thm:B}
 Let $\Oo=S^2(m,n)$ be a rotationally symmetric spindle orbifold with $m+n$ even. Then we have
 \begin{align*}
  \rho_{\mathrm{contr},2}(\Oo) \le \frac{m+n}{2}\pi \, 
 \end{align*}
 and the upper bound is attained if and only if $\Oo$ is Besse.
\end{thmB}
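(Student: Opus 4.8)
The plan is to reduce Theorem B to the systolic inequality for spheres of revolution of Abbondandolo et al. \cite{abbondandolo18} by passing to the covering of $T^1\Oo$ indicated in the discussion preceding the statement. Since $m+n$ is even, the cyclic group $\pi_1(T^1\Oo)\cong\Z_{m+n}$ has a unique subgroup $H$ of order $2$, of index $\frac{m+n}{2}$; let $\pi\colon \widetilde M\to T^1\Oo$ be the associated covering. By \cite{lange20} the total space is $\widetilde M\cong \RP^3 = L(2,1)=T^1 S^2$. Writing $\alpha$ for the canonical contact form whose Reeb flow is the geodesic flow, the pullback $\tilde\alpha=\pi^*\alpha$ is a contact form on $\RP^3$ whose Reeb flow is the lift of the geodesic flow of $\Oo$.

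First I would record how the relevant invariants transform under the covering. A closed geodesic $\gamma$ of $\Oo$ lifts to a closed Reeb orbit of $\tilde\alpha$ precisely when the class of its canonical lift in $\pi_1(T^1\Oo)$ lies in $H$, and since $\pi_*$ is injective such a lift is traversed once and has the same period; hence the minimal period of $\tilde\alpha$ equals $\ell_{\mathrm{min},2}(\Oo)$. On the other hand the contact volume is multiplied by the covering degree, so $\vol(\widetilde M,\tilde\alpha)=\frac{m+n}{2}\vol(T^1\Oo,\alpha)$.

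The decisive step is to realise $(\RP^3,\tilde\alpha)$ as the geodesic flow of a rotationally symmetric metric $\hat g$ on $S^2$. The rotational $S^1$-symmetry of $\Oo$ lifts to $\widetilde M$ and commutes with the reversible, integrable lifted flow, and I would show that the corresponding Clairaut data define a \emph{smooth} profile function on $S^2$ — the covering unwraps the cone angles $2\pi/m$ and $2\pi/n$ at the two poles into the smooth angle $2\pi$ — whose geodesic flow is conjugate to the Reeb flow of $\tilde\alpha$. Under this identification the minimal period becomes the length of the shortest closed geodesic of $(S^2,\hat g)$, so $\ell_{\mathrm{min}}(S^2,\hat g)=\ell_{\mathrm{min},2}(\Oo)$, while comparing contact volumes gives $\area(S^2,\hat g)=\frac{m+n}{2}\area(\Oo)$. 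Moreover $\hat g$ is Besse — equivalently Zoll, by \cite{gromoll81} — if and only if $\Oo$ is Besse: every orbit of $\tilde\alpha$ projects to a geodesic of $\Oo$ and, the deck group being finite, every geodesic of $\Oo$ is the image of a $\tilde\alpha$-orbit, so one family consists entirely of closed orbits exactly when the other does (this is also recorded in the remark preceding the theorem).

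With this identification in place the theorem is immediate: applying \cite{abbondandolo18} to $(S^2,\hat g)$ yields $\ell_{\mathrm{min}}(S^2,\hat g)^2\le\pi\,\area(S^2,\hat g)$, with equality iff $\hat g$ is Zoll, and substituting the two relations above gives $\ell_{\mathrm{min},2}(\Oo)^2\le \tfrac{m+n}{2}\pi\,\area(\Oo)$, that is $\rho_{\mathrm{contr},2}(\Oo)\le\frac{m+n}{2}\pi$, with equality exactly when $\Oo$ is Besse. I expect the main obstacle to be precisely the geometric realisation in the third step: one must check that the lifted $S^1$-invariant Reeb flow is genuinely the geodesic flow of a smooth surface of revolution — in particular that the profile function is smooth at the poles, where the cone points are unwrapped — and that the systole and area transform exactly as claimed. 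The remaining arguments are bookkeeping with covering degrees together with the cited sphere-of-revolution inequality.
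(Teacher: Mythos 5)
Your covering-space bookkeeping (steps 1--2) is fine: closed orbits of $\tilde\alpha$ correspond exactly to closed geodesics whose lift lies in the order-$2$ subgroup $H$, with equal periods, and the contact volume multiplies by the covering degree $\tfrac{m+n}{2}$. The proof therefore stands or falls with your ``decisive step'', and that step has a genuine gap. Your stated mechanism --- ``the covering unwraps the cone angles $2\pi/m$ and $2\pi/n$ at the two poles into the smooth angle $2\pi$'' --- cannot work when $m\neq n$. Unwrapping a cone angle $2\pi/m$ to $2\pi$ requires local degree $m$ around one singular fibre and degree $n$ around the other, whereas your covering has a single global degree $\tfrac{m+n}{2}$, which equals $m$ and $n$ only if $m=n$. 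Worse, for $m\neq n$ the paper notes (Section 2.1) that $S^2(m,n)$ is \emph{not developable}: there exists no manifold covering the orbifold at all, so no covering of $T^1\Oo$ can be induced by, or fibre compatibly over, an ``unwrapped'' surface. Concretely, one can check that the covering $\RP^3\to L(m+n,1)$ associated to $H$ unwraps the equator/fibre directions (neither of which lies in $H$ when $m\neq n$), not the neighbourhoods of the exceptional fibres; e.g.\ for $S^2(1,3)$ the lifted Seifert fibration of $\RP^3$ still has an exceptional fibre of multiplicity $3$. So the lifted Reeb flow does not come with any tautological realisation as a geodesic flow, and whether it is \emph{conjugate} to the geodesic flow of a smooth sphere of revolution is a nontrivial inverse problem: one would have to show that the frequency data of the lifted integrable flow (oscillation period $P(\kappa)$ together with the halved angular advance $\tfrac12\Delta\theta(\kappa)$) is realised by a smooth profile function satisfying the pole conditions $\hat r'(0)=1$, $\hat r'(\hat M)=-1$, including the correct even/odd expansions at the poles and at all (possibly degenerate, possibly multiple) equators, and that matching frequency data yields an actual conjugacy transferring both systole and volume. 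None of this is supplied, and it is essentially a second research problem rather than a checkable detail; only in the case $m=n$, where $S^2(m,m)=S^2/\Z_m$ is a global quotient and the covering of unit tangent bundles is induced by the orbifold covering $S^2\to S^2(m,m)$, does your argument go through as written.

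For comparison, the paper does not attempt any such realisation. It proves Theorem B by the same direct mechanism as Theorem A: it first computes $\rho_{\mathrm{contr},2}$ for Besse metrics using Proposition \ref{thm:tannery_orbifold} and Remark \ref{rmk:windingnumber_homotopyclass} (every non-equator geodesic has winding number $\tfrac{m+n}{2}$, hence lift in $H$, giving $\ell_{\mathrm{min},2}=(m+n)\pi$), and then, for non-Besse metrics, runs the generating-function argument on the Birkhoff annulus: Corollary \ref{rmk:critpointsgeneratingfct_contractiblegeodesic} identifies critical points of $F$ with closed geodesics whose lifts lie in $H$, and the contact-volume estimate of Lemma \ref{lemma:contactvolume} forces a critical value $\mu$ with $\mu^2<\tfrac{m+n}{4}\vol(T^1\Oo)$. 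That route needs no statement about the lifted flow being a geodesic flow, which is precisely the claim your proposal leaves unproven.
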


For all values of $k$ not covered by Theorem A and B, Besse metrics even fail to be local maximizers of the systolic ratio $\rho_{\text{contr},k}$, see Remark \ref{rmk:no_local_maximizer_of_rho_sysk}.\\

The above mentioned fact that Zoll metrics on $S^2$ are local maximizers in the $C^3$-topology is a corollary of a corresponding statement about Zoll Reeb flows. If $\tau_1(\lambda)$ denotes the minimum of all periods of closed Reeb orbits of a closed, connected contact $3$-manifold $(Y,\lambda)$, then Zoll Reeb flows are local maximizers in the $C^3$-topology of the systolic ratio \cite{abbondandolo18_2,Benedetti:2021aa,Abbondandolo:2019tl}
\begin{align}
\label{e:1st_systolic_ratio} 
\rho_1(\lambda) := \frac{\tau_1(\lambda)^2}{\vol(Y,\lambda)}.
\end{align}
Here $\vol(Y,\lambda)$ is the contact volume defined as the integral of the volume form $\lambda\wedge \diff\lambda$ over $Y$, which in case of the unit tangent bundle of a surface equals $2\pi$ times the surface's area \cite[Proposition~3.7]{abbondandolo17}. Note that on a closed contact $3$-manifold a closed geodesic always exists by Taube's proof of the Weinstein conjecture in dimension $3$ \cite{Taubes:2007wi}. Similarly, Abbondandolo, Mazzucchelli and the first named author have recently characterized Besse Reeb flows on closed, contact $3$-manifolds as local maximizers of the higher systolic ratios
\[
\rho_k(\lambda) := \frac{\tau_k(\lambda)^2}{\mathrm{vol}(Y,\lambda)},
\]
where $\tau_k(\lambda)$ is, roughly speaking, the $k$-th shortest period in the period spectrum of the Reeb flow \cite{Abbondandolo:2021tl}.

As our third result we obtain a corresponding global sharp upper bound in the class of rotationally symmetric spindle orbifolds. To make this precise, we denote by $\sigma(\Orb)$ the period spectrum of the geodesic flow $\phi^t$ of a rotationally symmetric spindle orbifold $\Orb$, i.e.\ the set
\[
\sigma(\Orb)=\big\{t>0\ \big|\ \mathrm{fix}(\phi^t)\neq \emptyset \big\},
\]
and by $\tau_k(\Orb)$ the infimum of all positive real numbers $\tau$ such that there exist at least $k$ closed geodesics with period less than or equal to $\tau$. In formulas,
\begin{equation}
\label{e:tau_k}
\tau_k(\Orb):=\inf\Bigg\{\tau>0\ \Bigg|\ \sum_{0<t\leq\tau} \#\big(\mathrm{fix}(\phi_\lambda^t)/\sim\big)\geq k  \Bigg\},
\end{equation}
where $\sim$ is the equivalence relation on the unit tangent bundle of $\Orb$ which identifies points on the same orbit of the geodesic flow. Note that the sequence of values $\tau_k(\lambda)$, $k\geq1$, is (not necessarily strictly) increasing and consists of elements of $\sigma(\lambda)$.  
Finally, the $k$-th systolic ratio of $\Orb$ is defined as the positive number
\[
\rho_k(\Orb) := \frac{\tau_k(\Orb)^2}{\mathrm{area}(\Orb)}.
\]
Note that this definition differs from the one given in \cite{Abbondandolo:2021tl} by a factor of $2\pi$. We prove the following result.

\begin{thmC}\label{thm:C}
 Let $\Oo=S^2(m,n)$ be a rotationally symmetric spindle orbifold. Then we have
 \begin{align*}
  \rho_{\frac{n+m}{2-\alpha}}(\Oo) \le \frac{2(m+n)\pi}{(2-\alpha)^2} \, .
 \end{align*}
 Moreover, the upper bound is attained if and only if $\Oo$ is Besse.
\end{thmC}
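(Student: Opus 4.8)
The plan is to derive Theorem~C from Theorems~A and~B rather than to redo the variational analysis. The bridge is the single structural inequality
\begin{equation*}
\tau_{\frac{m+n}{2-\alpha}}(\Oo)\ \le\ \ell_{\mathrm{min},\,2-\alpha}(\Oo),
\end{equation*}
where $\ell_{\mathrm{min},1}=\ell_{\mathrm{min},\mathrm{contr}}$ and $\ell_{\mathrm{min},2}$ are exactly the lengths controlled in Theorems~A and~B. Granting it, one gets $\rho_{\frac{m+n}{2-\alpha}}(\Oo)\le\rho_{\mathrm{contr},\,2-\alpha}(\Oo)$, and since the bound of Theorem~A is $2(m+n)\pi=\frac{2(m+n)\pi}{(2-\alpha)^2}$ for $\alpha=1$ and the bound of Theorem~B is $\frac{m+n}{2}\pi=\frac{2(m+n)\pi}{(2-\alpha)^2}$ for $\alpha=0$, the asserted inequality follows in both parities at once. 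The conceptual reason the two a priori different functionals share the same sharp constant is that the order $2-\alpha\in\{1,2\}$ and the index $\frac{m+n}{2-\alpha}$ are matched through the order of the equator class in $\pi_1(T^1\Oo)=\Z_{m+n}$.

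To prove the structural inequality I would use the integrable (Clairaut) structure of the geodesic flow on a rotationally symmetric $S^2(m,n)$ together with the facts, recorded in \cite{lange20} and in the set-up of Theorems~A--B, that a shortest rotationally invariant geodesic (``equator'') has some length $\ell_0$ and that its lift generates $\Z_{m+n}$; consequently the same holds for every rotationally invariant closed geodesic, as these are all freely homotopic to the equator through the velocity lifts of latitudes. The $j$-fold iterate of the equator then represents $j\in\Z_{m+n}$, so its $\frac{m+n}{2-\alpha}$-fold iterate represents an element of order exactly $2-\alpha$ and has length $\frac{m+n}{2-\alpha}\ell_0$; hence $\ell_{\mathrm{min},\,2-\alpha}\le\frac{m+n}{2-\alpha}\ell_0$. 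Now let $\gamma$ realize $\ell_{\mathrm{min},\,2-\alpha}$, of length $L$, and distinguish two cases. If $\gamma$ is not rotationally invariant, the circle action sweeps out an entire $S^1$-family of closed geodesics of length $L$, so $\#\big(\mathrm{fix}(\phi^L)/\!\sim\big)=\infty$ and \eqref{e:tau_k} gives $\tau_{\frac{m+n}{2-\alpha}}\le L$ immediately. If $\gamma$ is rotationally invariant, it is an iterate of a waist; since that waist generates $\Z_{m+n}$ and $\gamma$ has order dividing $2-\alpha$, the iterate number is at least $\frac{m+n}{2-\alpha}$, which together with the previous bound forces $L=\frac{m+n}{2-\alpha}\ell_0$ and exhibits $\frac{m+n}{2-\alpha}$ iterates of the equator of length $\le L$, so again $\tau_{\frac{m+n}{2-\alpha}}\le L$.

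For the rigidity statement, if $\rho_{\frac{m+n}{2-\alpha}}(\Oo)$ attains the bound then both the structural inequality and the inequality of Theorem~A or~B are equalities, and the latter already characterizes $\Oo$ as Besse. Conversely, for a Besse spindle the length-spectrum rigidity recalled in the introduction shows that every prime closed geodesic other than the equator has length exactly $\frac{m+n}{2-\alpha}\ell_0$; hence below that level the only periods present are the first $\frac{m+n}{2-\alpha}-1$ iterates of the equator, so $\tau_{\frac{m+n}{2-\alpha}}=\frac{m+n}{2-\alpha}\ell_0=\ell_{\mathrm{min},\,2-\alpha}$, the structural inequality is an equality, and the bound is attained through the equality case of Theorem~A or~B.

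The step I expect to be delicate is the bookkeeping behind the structural inequality, i.e.\ making \eqref{e:tau_k} precise. One has to fix the counting convention for closed geodesics (orientation, and the treatment of the $S^1$-families produced by the rotational symmetry), check that the equator is genuinely the unique orbit contributing below level $\frac{m+n}{2-\alpha}\ell_0$ in the Besse case, and rule out a shorter geodesic of order dividing $2-\alpha$. By contrast, the analytic core is not reproved here: it is imported wholesale from Theorems~A and~B, so that the novelty of Theorem~C is structural and spectral rather than variational.
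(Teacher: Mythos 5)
Your proposal is correct, and it takes a genuinely different route from the paper's. The paper does not use Theorems A and B as black boxes; instead, its proof of Theorem C re-runs the generating-function analysis: assuming the first $\frac{m+n}{2-\alpha}$ iterates of the equator fail the desired length bound, it recovers inequality \eqref{eq:generatingfunctionintegral} and, exactly as in the proof of Theorem A, produces a critical point $\eta_0$ of $F$ with $F(\eta_0)^2<\frac{m+n}{4}\vol(T^1\Oo)$; Corollary~\ref{rmk:critpointsgeneratingfct_contractiblegeodesic} and the rotational symmetry then yield the required $\frac{m+n}{2-\alpha}$ short closed geodesics. Your bridging inequality $\tau_{\frac{m+n}{2-\alpha}}\le\ell_{\mathrm{min},\,2-\alpha}$ isolates precisely the two non-analytic mechanisms the paper uses, namely that non-invariant closed geodesics come in $S^1$-families of distinct flow orbits, and that a rotationally invariant closed geodesic whose lift lies in the order-$(2-\alpha)$ subgroup of $\pi_1(T^1\Oo)\cong\Z_{m+n}$ must be at least the $\frac{m+n}{2-\alpha}$-fold iterate of an equator, and turns them into a standalone comparison $\rho_{\frac{m+n}{2-\alpha}}(\Oo)\le\rho_{\mathrm{contr},\,2-\alpha}(\Oo)$, after which Theorems A and B (whose constants indeed both equal $\frac{2(m+n)\pi}{(2-\alpha)^2}$ in the respective parities) give the bound and the rigidity at once. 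Your approach buys modularity, an intermediate inequality of independent interest valid for all rotationally symmetric spindles, and no reopening of the variational argument; the paper's version buys independence from the statement of Theorem B, since it only recycles inequalities already displayed inside the proof of Theorem A. One point you should tighten: your proof of the bridging inequality presupposes that $\ell_{\mathrm{min},\,2-\alpha}$ is attained by some closed geodesic $\gamma$, which on an orbifold is not immediate (minimizing sequences of geodesics may degenerate onto the singular points). This is easily avoided by running the dichotomy on the value rather than on a minimizer: either $\ell_{\mathrm{min},\,2-\alpha}\ge\frac{m+n}{2-\alpha}\ell_0$, in which case the iterates of the shortest equator already give $\tau_{\frac{m+n}{2-\alpha}}\le\frac{m+n}{2-\alpha}\ell_0\le\ell_{\mathrm{min},\,2-\alpha}$, or there exist admissible closed geodesics of length strictly below $\frac{m+n}{2-\alpha}\ell_0$ and arbitrarily close to the infimum, each of which is non-invariant by your iterate count and hence bounds $\tau_{\frac{m+n}{2-\alpha}}$ by its length via the $S^1$-family argument.
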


We point out that the systolic ratio considered in Theorem C is in general distinct from the ones considered in Theorem A and B. For instance, if a rotationally symmetric metric on $S^2(m,n)$ has an alternating and sufficiently long sequence of very long equators and very short equators of the same size, then the systolic ratio considered in Theorem C will be smaller than the ones considered in Theorem A and B.

The analytic methods used in the proofs of Theorem A, B and C are the same as in \cite{abbondandolo18}. The main additional difficulty compared to the result in \cite{abbondandolo18} is the identification and use of the correct systolic ratios which requires e.g. topological arguments.

In Section \ref{chapter:prelim2} we recall basic facts about Riemannian orbifolds, the dynamics on rotationally symmetric surfaces and the topology of the unit tangent bundle in the orbifold case. In Section \ref{sec:surfacerevolution} we introduce the contractible systolic ratio and establish some useful properties that are needed in this context. In Section \ref{sec:generatingfunction} and \ref{sec:generatingfunction_prop} we recall and generalize facts about generating functions. Finally, in Section \ref{sec:systolicineq} we prove our main results. \\

\textit{Acknowledgements:} The second named author is very grateful to his advisor \textit{Alberto Abbondandolo} both for pointing out the topic of interest treated in this paper and for his encouragement and support. He is partially supported by the DFG funded project SFB/TRR 191 ‘Symplectic Structures in Geometry, Algebra and Dynamics’ (Projektnummer 281071066 – TRR 191). The first named author thanks Alberto Abbondandolo for suggesting this collaboration. He was partially supported by the SFB/TRR 191 as well.

\section{Preliminaries}\label{chapter:prelim2}

\subsection{Riemannian orbifolds}\label{sec:orbifolds}
A \emph{length space} is a metric space in which the distance of any two points can be realized as the infimum of the lengths of all rectifiable paths connecting these points. An \emph{$n$-dimensional Riemannian orbifold} is a length space $\Orb$ such that each point in $\Orb$ has a neighborhood that is isometric to the quotient of an $n$-dimensional Riemannian manifold by an isometric action of a finite group. Every such Riemannian orbifold has a canonical smooth orbifold structure in the classical sense \cite{lange20_orb}. Conversely, every smooth orbifold can be endowed with a Riemannian metric, and then the induced length metric turns it into a Riemannian orbifold in the above sense.

For a point $x$ on a Riemannian orbifold, the isotropy group of a preimage of $x$ in a Riemannian manifold chart is uniquely determined up to conjugation. Its conjugacy class in $\Or(n)$ is called the \emph{local group} of $\Orb$ at $x$. The point $x$ is called \emph{regular} if this group is trivial and \emph{singular} otherwise. We denote the union of all singular point in $\Orb$ as $\Sigma(\Orb)$. In this paper we only work with $2$-dimensional orbifolds. In this case only cyclic groups generated by rotations and dihedral groups generated by reflections can occur as local groups. Hence, the underlying topological space is a manifold with boundary in this case, and the boundary consists precisely of those points whose local groups contain a reflection. A $2$-orbifold is \emph{orientable} if and only if its underlying surface has no boundary and is orientable. More specifically, we work with Riemannian $2$-orbifolds whose underlying topological space is a sphere, and so in this case only local groups generated by rotations occur. Such a local group is uniquely determined by its order. In this case the underlying smooth orbifold is uniquely determined by the orders $n_1,\ldots,n_k$ of the local groups, and we denote it as $S^2(n_1,\ldots,n_k)$.

We would like to compare the volume, i.e. the area in the present case, of such a Riemannian orbifold with squared lengths of closed geodesics on it. The \emph{volume} of an $n$-dimensional Riemannian orbifold $\Orb$ can for instance be defined as the volume of its regular part, which is a Riemannian manifold, i.e. $\vol(\Oo) := \vol_g(\Oo\setminus\Sigma(\Oo))$. Alternatively, we could define it as the $n$-dimensional Hausdorff measure. 

An \emph{(orbifold) geodesic} on a Riemannian orbifold is a continuous path that can locally be lifted to a geodesic in a Riemannian manifold chart. A \emph{closed geodesic} is a continuous loop that is a geodesic on each subinterval. A \emph{prime geodesic} is a closed geodesic that is not a concatenation of nontrivial closed geodesics. By the \emph{length} of a closed geodesic we mean its length as a parametrized curve.
We point out that this notion may differ from the length of the geometric image of the curve. To exemplify this and to provide some intuition for orbifold geodesics let us record some of their properties. 

Away from the singular part geodesics behave like  geodesics in Riemannian manifolds. A geodesic that hits an isolated singular point either passes straight through it or is reflected at it depending on whether the order of the corresponding local group is odd or even. We see that a closed geodesic that hits an isolated singular point of even order traverses its trajectory twice during a single period. This also shows that an orbifold geodesic is in general not locally length minimizing. On the other hand, a locally length minimizing path is always a geodesic in the orbifold sense.

In particular, we will be concerned with Riemannian $2$-orbifolds homeomorphic to $S^2$ that admit an effective $S^1$-action by isometries. In this case there can be at most two singular points, i.e. the orbifold is of type $S^2(m,n)$, and the singular points are fixed by the $S^1$-action. We refer to such an orbifold as a \textit{spindle orbifold}. Spindle orbifolds with $m\neq n$ play a special role among $2$-orbifolds in that they are the only orientable $2$-orbifolds that are not developable, i.e. they can not be realized as a global quotient of a manifold by a finite group action.

\subsection{Rotationally symmetric metrics} \label{sub:rotationally_symmetric_metrics}
Suppose $\Orb$ is a rotationally symmetric Riemannian $2$-orbifold homeomorphic to $S^2$. In other words, we have an effective and isometric $S^1$-action on $\Orb$. It follows for instance by the work of Mostert \cite{Moster:1957} that this action is conjugated to a linear action. In particular, it has precisely two fixed points. Since all singular points on $\Orb$ are isolated, we see that at most the two fixed points can be singular. By compactness there exists a minimizing geodesic $\sigma$ that connects the two fixed points. Suppose that $\sigma$ is parametrized by arclength on $[0,M]$, i.e. $\sigma(0)$ and $\sigma(M)$ are the two fixed points of the $S^1$-action. Then the map defined by
\begin{align*}
 \Phi: \R/2\pi\Z \times (0,M) \to \Orb \setminus \{\sigma(0),\sigma(M)\} \, , \ \ \ \Phi(\theta,s) = e^{i\theta}\sigma(s) \, .
\end{align*}
provides us with a smooth parametrization of the regular part of $\Orb$. With respect to this parametrization the metric attains the form
\begin{align}\label{eq:metricrotation}
 g = r(s)^2 d\theta^2 + ds^2 \, 
\end{align}
for some smooth function $r:(0,M) \To (0,\infty)$ such that $r(s)$ converges to $0$ when $s$ tends to $0$ or $M$. Moreover, the smoothness assumption implies additional boundary conditions, cf. Proposition \ref{thm:tannery_orbifold}.

Metrics of the form (\ref{eq:metricrotation}) for instance occur as \emph{(singular) spheres of revolution} in $\R^3$. Such a sphere of revolution $S$ that is invariant under rotations around the $z$-axis is uniquely determined by its intersection with the half-plane $\{(x,0,z)\in\R^3 \,|\, x\ge 0 \}$. This intersection has to be a smooth embedded curve which we will call the \textit{enveloping curve} and denote by $\sigma$. The fact that $S$ is homeomorphic to the sphere implies that there are only two points at which $\sigma$ makes contact with the axis of rotation (namely at its starting and end point). We denote the length of $\sigma$ by $M$ and give it an arclength parametrization $\sigma:[0,M] \to \R^2$. We denote the components of the curve by
\begin{align*}
 \sigma(s) = (r(s),z(s)) \, , \quad s \in [0,M] \, .
\end{align*}
Then $r$ is a non-negative function and it equals zero only for $s=0$ and $s=M$. The surface $S$ can be recovered from $\sigma$ by rotating the curve around the $z$-axis which produces the set
\begin{align*}
 S := \{(r(s)\cos\theta, r(s)\sin\theta, z(s)) \in \R^3 \,|\, \theta \in \R/2\pi\Z, s \in [0,M] \} \, .
\end{align*}
The induced Riemannian metric on the smooth part $U:=S \backslash \{\sigma(0),\sigma(M)\}$ of $S$ takes the form (\ref{eq:metricrotation}) with respect to the coordinates $s$ and $\theta$. Its completion is a Riemannian orbifold metric if and only if the function $r$ satisfies certain boundary conditions, cf. Proposition \ref{thm:tannery_orbifold}. In particular, the surface $S$ is smooth if and only if $\sigma$ extends to a smooth curve by mirroring it along the vertical axis in $\R^2$.

For now we keep the discussion general and work with metrics of the form (\ref{eq:metricrotation}) on an open annulus $U=\R / 2\pi \times (0,M)$ without regard to further smoothness assuring boundary conditions. We call those curves \textit{parallels} which are arclength parametrizations of the circles
\begin{align*}
 P_s := \{(r(s)\cos\theta,r(s)\sin\theta,z(s)) \,|\, \theta \in \R/2\pi\Z \} \, , \quad s \in (0,M)
\end{align*}
and orient them counterclockwise. If $s$ is a critical point of $r$, we call the corresponding parallel $P_s$ an \textit{equator}. Parallels are closed geodesics if and only if they are equators. We call curves \textit{meridional arcs} if they are unit speed parametrizations on $(0,M)$ with constant $s$-component. Meridional arcs are always geodesic arcs. In the case of $S$ being a smooth surface two such meridional arcs can be concatenated to form a closed geodesics which we call a \textit{meridian}. In the orbifold case the behaviour of geodesics through the singular points can be different as discussed in Section \ref{sec:orbifolds} and further below in this section.

We consider the tangent bundle $T^1U$ over $U$. On $U$ the metric is well defined and we denote the geodesic flow by $\phi$. Because of the singular points, the flow is not globally defined but rather as a map
 \begin{align*}
  \phi:\Omega \to T^1U \, , \quad (t,x) \mapsto \phi_t(x)
 \end{align*}
on a suitable maximal open neighbourhood $\Omega$ of $\{0\} \times T^1U$ in $\R\times T^1U$. By the \textit{Hilbert contact form}, which we denote by $\alpha$, we refer to the contact form on $T^1U$ which is obtained by restricting the canonical Liouville form of the cotangent bundle of $U$ to the unit cotangent bundle and then pulling it back to $T^1U$ by the bundle isomorphism which is induced by the metric. 

We parametrize the unit tangent bundle over $U$ as follows: Let $u$ be a unit tangent vector to $U$ at a point $p = \varphi(\theta,s)\in U$. We denote by $\beta(u)$ the angle formed by $u$ and the positive direction of the parallel $P_s$ passing through $p$. By taking \eqref{eq:metricrotation} into account, we get that $T^1U$ is the image of the diffeomorphism
\begin{align}
 &\Psi: \R/2\pi\Z \times \R/2\pi\Z \times (0,M) \to T^1U \, , \nonumber \\
 &\Psi(\theta,\beta,s) = \left( \Phi(\theta,s), \frac 1r \cos\beta \frac{\partial\Phi}{\partial\theta}(\theta,s) + \sin\beta \frac{\partial\Phi}{\partial s}(\theta,s) \right) \, . \label{eq:coord_unittangent}
\end{align} 
In the coordinate system \eqref{eq:coord_unittangent}, the Hilbert contact form can be calculated to be
\begin{align*}
 \alpha(\theta,\beta,s) = r(s) \cos\beta d\theta + \sin\beta ds \, .
\end{align*}
It follows that the contact volume form on $T^1U$ in these coordinates is given by
\begin{align}\label{eq:hilbertcontactform}
 (\alpha \wedge d\alpha)(\theta,\beta,s) = r(s)\, d\theta \wedge d\beta \wedge ds \, .
\end{align}
The Reeb vector field of the Hilbert contact form $\alpha$ takes the form
\begin{align*}
 R(\theta,\beta,s) = \frac{\cos\beta}{r(s)}\frac{\partial}{\partial\theta} + \frac{r'(s)\cos\beta}{r(s)}\frac{\partial}{\partial\beta} + \sin\beta \frac{\partial}{\partial s} \, .
\end{align*}
The geodesic flow $\phi:\Omega \to T^1U$ coincides with the Reeb flow of the Hilbert contact form. Consequently, by the above expression for the Reeb vector field, the geodesic equation is given by the following system of equations:
\begin{align}\label{eq:geodesiceqations}
\begin{split}
 \dot\theta &= \frac{\cos\beta}{r(s)} \, , \\[1ex]
 \dot\beta  &= \frac{r'(s)\cos\beta}{r(s)}  \, ,\\[0.5ex]
 \dot s		&= \sin\beta \, .
\end{split}
\end{align}
From the geodesic equation, one recovers a first integral of the geodesic flow which is known as the \textit{Clairaut function}. We make this statement precise in the form of the following lemma, whose proof is a direct calculation using the geodesic equations in the form of (\ref{eq:geodesiceqations}).
\begin{lemma}
 Let $\phi:\Omega \to T^1U$ be the geodesic flow of the metric (\ref{eq:metricrotation}) on $U$. Denote by
 \begin{align*}
 K: T^1U \to \R \, , \quad K(u) = K(\theta,\beta,s) := r(s)\cos\beta
 \end{align*}
 the so-called Clairaut function. Then for all $u \in T^1U$ we have
 \begin{align*}
  \frac{\partial}{\partial t} K(\phi_t(u)) = 0 \, .
 \end{align*}
\end{lemma}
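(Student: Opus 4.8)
The plan is to verify the claim by a direct application of the chain rule, differentiating the composition $t \mapsto K(\phi_t(u))$ along a solution of the geodesic flow and then substituting the geodesic equations \eqref{eq:geodesiceqations}. Writing $\phi_t(u) = (\theta(t),\beta(t),s(t))$ for the flow line through a given $u \in T^1U$, I first observe that $K(\theta,\beta,s) = r(s)\cos\beta$ does not depend on the angular coordinate $\theta$. Hence the equation $\dot\theta = \cos\beta/r(s)$ plays no role, and only the evolution of $\beta$ and $s$ enters the computation.

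Applying the chain rule gives
\begin{align*}
\frac{d}{dt}K(\phi_t(u)) = r'(s)\,\dot s\,\cos\beta - r(s)\,\sin\beta\,\dot\beta \, .
\end{align*}
Into this expression I substitute $\dot s = \sin\beta$ and $\dot\beta = r'(s)\cos\beta/r(s)$ from \eqref{eq:geodesiceqations}. The first summand becomes $r'(s)\sin\beta\cos\beta$, while the second becomes $-r(s)\sin\beta\cdot r'(s)\cos\beta/r(s) = -r'(s)\sin\beta\cos\beta$, so that the two terms cancel and the derivative vanishes identically.

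The only point that requires any attention is this cancellation, and it is worth recording \emph{why} it succeeds: the factor $r(s)$ in the denominator of $\dot\beta$ is exactly matched by the factor $r(s)$ multiplying $\sin\beta$ in the derivative of $K$, so that both summands reduce to $\pm\,r'(s)\sin\beta\cos\beta$. Since $r(s) > 0$ throughout $(0,M)$, this substitution is legitimate at every point of $T^1U$, and no genuine obstacle arises. The computation is entirely elementary and uses nothing beyond the explicit form of the geodesic equations; conceptually, the statement simply expresses that $K$ Poisson-commutes with the energy, reflecting the rotational symmetry of the metric \eqref{eq:metricrotation}.
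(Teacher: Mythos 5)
Your proof is correct and is exactly the argument the paper has in mind: the paper states that the lemma follows by ``a direct calculation using the geodesic equations in the form of \eqref{eq:geodesiceqations},'' which is precisely your chain-rule computation with the cancellation $r'(s)\sin\beta\cos\beta - r'(s)\sin\beta\cos\beta = 0$. Nothing further is needed.
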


We note that if for a geodesic $\gamma: (a,b)\to S$ we have that $K(\dot\gamma)=0$, the geodesic has to be a meridonal arc and will only be defined on a finite time interval. All other geodesics in the regular part of $S$ corresponding to positive or negative values of the Clairaut function, will be defined for all times. We record some facts about the behaviour of geodesics that are not meridional arcs in the following two lemmas. The first lemma tells us that such a geodesic winds around the axis of revolution in a fixed rotational direction.

\begin{lemma}\label{lemma:thetaderivative}
 Let $\gamma:\R\to S$ be a geodesic that does not coincide with a meridional arc. Then the time derivative of the angle-$\theta$-coordinate is either positive or negative along the geodesic.
\end{lemma}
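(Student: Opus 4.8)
The plan is to use the Clairaut function $K(\theta,\beta,s)=r(s)\cos\beta$, which by the previous lemma is conserved along the flow, to control the sign of $\dot\theta$. From the geodesic equations \eqref{eq:geodesiceqations} we have
\begin{align*}
\dot\theta = \frac{\cos\beta}{r(s)} = \frac{r(s)\cos\beta}{r(s)^2} = \frac{K(\dot\gamma)}{r(s)^2}\, .
\end{align*}
Since $r(s)>0$ on the open annulus $U$, the denominator $r(s)^2$ is strictly positive for all times at which the geodesic is defined in the regular part. Hence the sign of $\dot\theta$ is governed entirely by the sign of the constant $K(\dot\gamma)$.

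First I would observe that $K(\dot\gamma)$ is a constant of the motion, so it does not change sign along the geodesic. It therefore suffices to show that $K(\dot\gamma)\neq 0$ under the hypothesis that $\gamma$ is not a meridional arc. Indeed, the remark preceding the statement already records that $K(\dot\gamma)=0$ forces $\gamma$ to be a meridional arc (and to be defined only on a finite time interval). Contrapositively, a geodesic $\gamma:\R\to S$ defined for all times that does not coincide with a meridional arc must have $K(\dot\gamma)\neq 0$, i.e.\ either $K(\dot\gamma)>0$ or $K(\dot\gamma)<0$. In the first case $\dot\theta=K(\dot\gamma)/r(s)^2>0$ for all times, and in the second case $\dot\theta<0$ for all times, which is exactly the claim.

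To make the reduction to the remark fully self-contained I would justify why $K(\dot\gamma)=0$ characterizes meridional arcs. If $K(\dot\gamma)=r(s)\cos\beta=0$ at some time, then since $r(s)>0$ we must have $\cos\beta=0$, i.e.\ $\beta\in\{\pi/2,-\pi/2\}$; plugging this into the geodesic equations gives $\dot\theta=0$ and $\dot\beta=0$, so $\beta$ stays constant and the $\theta$-coordinate is constant, meaning $\gamma$ is a meridional arc with $\dot s=\sin\beta=\pm1$. Conversely a meridional arc has constant $\theta$, hence $\dot\theta=0$, forcing $\cos\beta=0$ and $K=0$. This is the content cited in the remark and it closes the logical gap.

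I do not expect a serious obstacle here, since the statement is essentially an immediate consequence of the conservation of $K$ combined with the positivity of $r$ on $U$. The only point requiring mild care is the distinction between a geodesic $\gamma$ \emph{coinciding with} a meridional arc and one that merely \emph{passes through} a point where it is momentarily tangent to a meridian; but because $K$ is a genuine first integral, it is enough to evaluate $K(\dot\gamma)$ at a single time, and the nonvanishing of $K$ at one time propagates to all times. The hypothesis that $\gamma$ is defined on all of $\R$ (rather than a finite interval) is used only to exclude the degenerate meridional case, consistent with the remark that meridional arcs are defined on finite time intervals.
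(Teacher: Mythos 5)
Your proof is correct and follows essentially the same route as the paper: both arguments combine the conservation of the Clairaut function $K(\dot\gamma)=r(s)\cos\beta$, the fact that $K(\dot\gamma)=0$ characterizes meridional arcs, and the first geodesic equation $\dot\theta=\cos\beta/r(s)$ (so that $\dot\theta$ has the constant sign of $K(\dot\gamma)$ since $r>0$). The only difference is that you spell out the justification that $K=0$ forces a meridional arc, which the paper simply cites from its preceding discussion.
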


\begin{proof}
 We already discussed that for geodesics that are not meridional arcs, the Clairaut function is either positive or negative. This is equivalent to the desired statement by the first geodesic equation in \eqref{eq:geodesiceqations}.
\end{proof}

The second lemma characterizes the geodesics which are not meridional arcs.

\begin{lemma}\label{lemma:geodesics_surfacerev}
 Let $\gamma:\R\to S$ be a geodesic that does not coincide with a meridional arc. Then one of the following two alternatives hold.
 \begin{enumerate}[i)]
 \item (Asymptotic geodesic) for $t \to -\infty$ and $t \to + \infty$ the geodesic $\gamma$ is asymptotic to two possibly coinciding equators $P_{s_-}$ and $P_{s_+}$ with $r(s_-)=r(s_+)=|K(\dot\gamma)|$;
 \item (Oscillating geodesic) the geodesic $\gamma$ oscillates between two parallels $P_{s_1}$ and $P_{s_2}$. More precisely, there exist numbers $0 < s_1 < s_2 < M$ such that
 \begin{align*}
  r(s_1)=r(s_2)=|K(\dot\gamma)|<r(s) \quad \forall s \in (s_1,s_2) \, , \quad r'(s_1)>0 \, , \ r'(s_2) < 0 \, ,
 \end{align*}
 $\gamma$ is confined to the strip
 \begin{align*}
  \bigcup_{s\in[s_1,s_2]} P_s \, ,
 \end{align*}
 and it alternately touches the parallels $P_{s_1}$ and $P_{s_2}$ tangentially infinitely many times.
 \end{enumerate}
\end{lemma}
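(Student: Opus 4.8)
The plan is to analyze the geodesics using the Clairaut function $K(\dot\gamma)$ as a conserved quantity, which for a non-meridional geodesic is a nonzero constant; without loss of generality take $K(\dot\gamma) = c > 0$. By Lemma \ref{lemma:thetaderivative} the $\theta$-coordinate is strictly monotone, so the geodesic never degenerates to a parallel unless it is an equator. The key observation is that along the geodesic, $r(s(t)) \cos\beta(t) = c$ is constant, which forces $r(s(t)) \ge c$ for all $t$ (since $|\cos\beta| \le 1$), and equality $r(s) = c$ occurs exactly when $\cos\beta = \pm 1$, i.e.\ when $\sin\beta = 0$, i.e.\ when $\dot s = 0$ by the third equation in \eqref{eq:geodesiceqations}. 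Thus the turning points of the $s$-coordinate are precisely the points where the geodesic becomes tangent to a parallel $P_s$ with $r(s) = c$.

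First I would study the scalar dynamics of $s(t)$. From $\dot s = \sin\beta$ and the Clairaut relation $\cos\beta = c/r(s)$, we obtain $\dot s^2 = \sin^2\beta = 1 - c^2/r(s)^2$, so that $s(t)$ evolves in the one-dimensional effective potential region $\{s : r(s) \ge c\}$, confined by the level set $r(s) = c$. This reduces the problem to understanding the connected component of $\{r \ge c\}$ containing the initial value $s(0)$, together with the boundary behaviour. I would distinguish cases according to whether this component is bounded away from the endpoints $0$ and $M$ by values where $r = c$, or whether it is bordered by a critical point of $r$ (an equator where $r'= 0$).

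Next I would separate the two alternatives. For the \emph{oscillating} case (ii), suppose the component of $\{r \ge c\}$ containing $s(0)$ is an interval $[s_1, s_2]$ with $r(s_1) = r(s_2) = c$ and $r(s) > c$ on the interior, with $r'(s_1) > 0$ and $r'(s_2) < 0$ (these sign conditions are what distinguish a genuine turning point from an equator). Near such a simple turning point $s_i$ one has $\dot s^2 \approx \frac{2 r'(s_i)}{r(s_i)}(s - s_i)$ up to higher order, a nondegenerate turning point, so $s(t)$ reaches $s_i$ in finite time, reflects, and returns; standard ODE comparison then shows $s(t)$ oscillates periodically between $s_1$ and $s_2$, touching each parallel tangentially infinitely often, while $\theta$ keeps increasing by monotonicity. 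For the \emph{asymptotic} case (i), the relevant boundary of the component is a critical point $s_\pm$ of $r$ with $r(s_\pm) = c$ and $r'(s_\pm) = 0$; here the parallel $P_{s_\pm}$ is itself an equator and hence a geodesic. At such a degenerate zero of $\dot s^2$ the convergence $s(t) \to s_\pm$ takes infinite time by the usual estimate (the integral $\int ds/\sqrt{1 - c^2/r(s)^2}$ diverges at a double zero), so $\gamma$ is forward- and backward-asymptotic to equators $P_{s_+}$ and $P_{s_-}$, which may coincide.

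The main obstacle I anticipate is the boundary analysis near the endpoints and the careful justification that these two cases are genuinely exhaustive. One must rule out, for a non-meridional geodesic with $c > 0$, that $s(t)$ could escape to the singular endpoints $s = 0$ or $s = M$: since $r \to 0$ there while $r(s(t)) \ge c > 0$, the geodesic is automatically bounded away from the singularities, so it never hits a singular point and is defined for all time, consistent with the remark preceding the lemma. The remaining delicate point is distinguishing a simple turning point (finite-time reflection, giving oscillation) from a degenerate one at an equator (infinite-time approach, giving the asymptotic behaviour); this hinges on whether $r'$ vanishes at the boundary value, and the quantitative difference between $\int (s-s_i)^{-1/2}\,ds < \infty$ and $\int (s - s_\pm)^{-1}\,ds = \infty$. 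Handling the convergence estimates uniformly, and confirming that no other degeneracies of $r$ on the confining interval can occur for the specific level $c$ chosen, is where the technical care will be required.
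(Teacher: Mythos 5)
Your proposal is correct and is essentially the same argument as the paper's: the paper's proof is a one-line citation of \cite[Lemma~1.1]{abbondandolo18}, whose content is exactly the Clairaut-function analysis you carry out, reducing to $\dot s^2 = 1 - c^2/r(s)^2$ and distinguishing simple turning points of the level set $\{r = c\}$ (finite-time reflection, hence oscillation) from degenerate ones at equators (divergent time integral, hence asymptotic behaviour). Moreover, the one point the paper itself adds---that a non-meridional geodesic stays in the regular part, so the smooth proof extends verbatim to the orbifold setting---is precisely your observation that $r(s(t)) \ge |K(\dot\gamma)| > 0$ along the geodesic while $r \to 0$ at the singular endpoints.
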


\begin{proof}
 The proof given in \cite[Proof of Lemma~1.1]{abbondandolo18} for $S$ being smooth everywhere extends to our more general setup, as we only consider geodesics in the regular part.
\end{proof}

We conclude this section by discussing of how to extend the meridional arcs to closed geodesics in the case of rotationally symmetric spindle orbifolds. Like in the smooth case, we again call those geodesics \textit{meridans}. We already remarked that a geodesic that hits an isolated singular point either passes straight through it or is reflected depending on whether the order of the corresponding local group is odd or even. That means, if we distinguish all possible cases, a \textit{meridian} is the curve of the following form: for $m$ and $n$ odd, it is a curve obtained by concatenating a meridional arc $\gamma_m$ with the antipodal meridional arc $\gamma_m^a$ that is parametrized in a direction that makes the resulting curve closed; for $m$ and $n$ even, a meridian is a curve obtained by concatenating a meridional arc $\gamma_m$ with $\gamma_m^{-1}$, that is, the inverse parametrization of itself; for $m+n$ odd a meridian is a curve of the form $\gamma_m \ast \gamma_m^{-1} \ast (\gamma_m^a)^{-1} \ast \gamma_m^a$.

\subsection{Besse orbifolds}\label{sec:besseorbifolds}

A Riemannian orbifold is called \emph{Besse} if all its geodesics are closed. In this case we also call the Riemannian metric itself Besse. A Besse $2$-orbifold is always compact \cite[Propositon~2.6]{lange20}. Examples of Besse orbifolds are quotients of the round $2$-sphere $S^2$ by finite subgroups of $\mathrm{O}(3)$. Such orbifold are called \emph{spherical}. Other examples of Besse metrics exist on spindle orbifolds. Such examples can either be constructed as quotient metrics for weighted Hopf action on $S^3$, see e.g. \cite{Guillemin09}, or as rotationally symmetric metrics along the lines of the previous section. The latter source yields an infinite-dimensional space of examples which we will discuss in more detail below. It is conjectured that also the space of non-rotationally symmetric Besse metrics on spindle orbifolds is infinite dimensional. Conversely, an orientable Besse $2$-orbifold is either spherical or a spindle orbifolds \cite[Propositon~2.7]{lange20}. An interesting property of Besse $2$-orbifolds is that the length spectrum of the closed geodesics is determined up to scaling by the underlying (smooth) orbifold  \cite{lange20}. This observation generalizes a result of Gromoll and Grove according to which a Besse metric on the $2$-sphere is actually Zoll, i.e. all its prime geodesics have the same length (and are simple closed). For results and more information on Besse orbifolds in higher dimensions we refer the reader to \cite{Amann18}.

We now turn to rotationally symmetric metrics on spindle orbifolds as discussed in the previous section. In particular, we will recall a characterization of Besse metrics among them. But first we recall the following fact.

\begin{lemma}\label{lemma:uniqueequator}\cite[Lemma~4.9]{besse78}
 Let $S^2(m,n)$ be a rotationally symmetric Besse spindle orbifold. Then there exists a unique equator (up to reversing the orientation).
\end{lemma}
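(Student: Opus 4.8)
The plan is to translate the Besse condition into a statement about the profile function $r$ and its critical points, using that the equators are exactly the parallels $P_s$ over critical points of $r$. Since a Besse orbifold has all geodesics closed, none of them can be of the asymptotic type in Lemma \ref{lemma:geodesics_surfacerev}(i): such a geodesic is defined for all times and converges to equators as $t\to\pm\infty$ without ever returning, so it is not periodic. Hence the whole problem reduces to showing that the absence of asymptotic geodesics forces $r$ to possess exactly one critical point $s_0\in(0,M)$, which is then automatically the global maximum; the parallel $P_{s_0}$ is the unique equator, the only remaining ambiguity being the reversal of its orientation.

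First I would pin down when a turning point produces asymptotic behaviour. By the Clairaut lemma a non-meridional geodesic with $|K|=c\in(0,\max r)$ is confined to $\{r\ge c\}$ and is tangent to a parallel $P_a$ exactly when $r(a)=c$; from the geodesic equations \eqref{eq:geodesiceqations} its approach to $P_a$ takes infinite time precisely when $r'(a)=0$, i.e. when the turning point $a$ is itself a critical point of $r$. Thus an asymptotic geodesic exists as soon as $r$ has a critical point $a$ with value $c=r(a)<\max r$ that is reachable from an adjacent region with $r\ge c$. I would then inspect the possible local shapes of such a critical point: a local minimum (a \emph{neck}) is reachable from both sides, and a critical point through which $r$ is monotone (a horizontal inflection) is reachable from the side on which $r\ge c$; in either case one obtains an asymptotic geodesic. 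If instead $a$ is a strict local maximum with $r(a)<\max r$, then between $a$ and the global maximum $r$ must attain an interior minimum, producing a neck and hence again an asymptotic geodesic. Consequently the Besse condition rules out every critical point whose value is strictly below $\max r$.

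It remains to exclude the possibility that $r$ has several critical points all attaining the value $\max r$; by connectedness this can only happen if $r$ is constant on a nondegenerate subinterval $[p,q]\subset(0,M)$, a plateau of equators. This degenerate case is the main obstacle, since such a plateau produces no asymptotic geodesic directly. I would treat it quantitatively: for an oscillating geodesic with $|K|=\max r-\varepsilon$ one has $\cos\beta=(\max r-\varepsilon)/\max r$ while crossing the flat region of height $h=q-p$, so that the accumulated angle is
\[
\Delta\theta \;=\; \frac{(\max r-\varepsilon)\,h}{(\max r)^2\,\sin\beta}\;\sim\;\frac{h}{\sqrt{2\,(\max r)\,\varepsilon}}\,,
\]
which tends to $+\infty$ as $\varepsilon\to0^{+}$. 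Thus the angular advance per oscillation is unbounded near the plateau value, so the rotation number of these geodesics cannot be locally constant there; but on a Besse metric the rotation number depends continuously on the Clairaut value and, all geodesics being closed, takes rational values, hence is locally constant, a contradiction. Therefore $r$ has a single critical point, which is the global maximum, and $S^2(m,n)$ carries a unique equator up to reversal of orientation. The step I expect to be most delicate is exactly this exclusion of plateaus and degenerate critical values through the blow-up of the angular advance, since it is the only part not settled directly by the clean dichotomy of Lemma \ref{lemma:geodesics_surfacerev}.
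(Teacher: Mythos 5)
The paper itself offers no proof of this lemma; it is quoted with a citation to \cite[Lemma~4.9]{besse78}. So your attempt can only be measured on its own merits, and on those it is essentially correct. Your first part is the classical Clairaut argument: a critical point $a$ of $r$ with $c=r(a)<\max r$ yields a geodesic with Clairaut constant $c$, launched from an adjacent point with $r>c$, which approaches an equator at level $c$ in infinite time and is therefore not closed. The divergence criterion you use (infinite approach time precisely when the turning parallel is critical) is correct and rests on the Taylor bound $r(s)-c\le C(s-a)^2$, valid since $r$ is smooth and $r'(a)=0$. The details you gloss over are routine: the actual asymptote may occur at a critical point at level $c$ different from the one you first named, and one needs points with $r>c$ arbitrarily close to it, which can only fail in the plateau configuration that you defer anyway. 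One expository inaccuracy: your opening claim that the absence of asymptotic geodesics alone forces a unique critical point is false, since a plateau at the top level admits no asymptotic geodesics at all; but your actual proof does not use this claim, because you treat the plateau with the full Besse hypothesis rather than with asymptotic geodesics.

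In the plateau case your blow-up computation of the angular advance, $\Delta\theta\sim h/\sqrt{2(\max r)\,\varepsilon}$, is correct, and the argument closes, but the continuity of the rotation number in the Clairaut value is the one step you assert rather than prove, and it is not a consequence of Besse \emph{per se}. It holds because, under Besse, your first part guarantees that every value in $(0,\max r)$ is a regular value of $r$, so all non-meridional geodesics through a Birkhoff annulus are oscillating with nondegenerate turning points and the first-return map is smooth. This is precisely what the paper supplies independently of the present lemma, so you may invoke it without circularity: Lemma~\ref{lemma:gen_fct_return_mapandtime} produces a smooth generating function $F$ on $(-1,1)$, and Lemma~\ref{lemma:windingnumber_generatingfct} identifies $F'(\eta)$ with $LW(\eta)$ up to an additive constant on each half-interval (in the plateau situation all equators have the same length, so any plateau parallel serves as the minimal equator required there). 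Closedness of every geodesic forces $N\bigl(F'(\eta)+\alpha\tfrac{L}{2}\bigr)\in L\Z$ for some $N\ge 1$, hence $F'(\eta)\in L\Q$ for every $\eta$; continuity of $F'$ on the connected interval $(-1,1)$ then forces $F'$ to be constant, which contradicts $W(\eta)\to\infty$ as $|\eta|\to 1$, i.e.\ your plateau estimate. With that step made explicit, your proof is complete.
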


In light of the preceding lemma, for the rest of this section, whenever we discuss rotationally symmetric Besse spindle orbifold, we normalize the metric such that the unique equator has length $2\pi$. In this case the length of all other prime geodesics is constant and does only depend on the orders $m$ and $n$ of the two singular points. This can be deduced from \cite[Theorem~4.13]{besse78} and has been proven in greater generality without the symmetry assumption in \cite{lange20}. In the following proposition we state this result together with a classification of rotationally symmetric Besse spindle orbifolds that also follows from \cite[Theorem~4.13]{besse78}.

\begin{prop}\label{thm:tannery_orbifold}
 Let $S^2(m,n)$ be a spindle orbifold with a rotationally symmetric Riemannian metric. It has all of its geodesics closed with minimal common period $2\pi$ if and only if on the regular part $U$ there exists a coordinate system $(R(s),\theta)$ such that the metric takes the form
 \begin{align}
  g = \left( \frac{m+n}{2} + h(\cos R) \right)^2 dR^2 + \sin^2 R\, d\theta^2 \, ,
 \end{align}
 where $h$ is an odd smooth function from $(-1,1)$ to $(-\frac{m+n}{2}, \frac{m+n}{2})$ that extends to a continuous function on $[-1,1]$ with $h(-1)=\frac{m-n}{2}=h(1)$.

 In this case the length of the equator is $2\pi$ and the length of any other prime geodesic is $2\frac{m+n}{2-\alpha} \pi$, where
 \begin{align*}
  \alpha = \begin{cases} 0 \, , & m+n \text{ even} \\ 1 \, , & m+n \text{ odd} \end{cases} \, .
 \end{align*}
 Any geodesic in the regular part of $S$ other than the equator oscillates $(1+\alpha)$ times between two parallels and makes $\frac{m+n}{2-\alpha}$ full revolutions around the singular points before closing up.
\end{prop}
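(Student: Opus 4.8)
\textbf{Proof plan for Proposition \ref{thm:tannery_orbifold}.}

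\textit{The plan is to} prove the equivalence by a normalization of the profile curve together with a careful analysis of the angular period of geodesics, and then to read off the two boundary conditions and the length of the prime geodesics. For the ``if'' direction I would start from a metric of the form $g = r(s)^2\,d\theta^2 + ds^2$ as in \eqref{eq:metricrotation} and reparametrize so that the radial variable becomes the angle $R$ of the equator; concretely, since the equator sits at the unique critical point of $r$, I would introduce $R$ via $\cos R = $ (a normalized antiderivative arising from the metric) so that the equator corresponds to $R=\pi/2$ and the two singular points to $R=0,\pi$. In these coordinates the metric takes the stated form $g = \bigl(\tfrac{m+n}{2}+h(\cos R)\bigr)^2\,dR^2 + \sin^2 R\,d\theta^2$, and the Besse condition translates, via the Clairaut first integral from Lemma \ref{lemma:geodesics_surfacerev} and the oscillation picture, into a fixed angular period for \emph{all} values of the Clairaut parameter. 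The core computation is the angular displacement of an oscillating geodesic between its two tangency parallels $P_{s_1}$ and $P_{s_2}$: using $\dot\theta = \cos\beta/r$ and $K = r\cos\beta$ constant, one writes the total change in $\theta$ over one oscillation as an integral of the form
\begin{align*}
 \Delta\theta(K) = 2\int_{s_1(K)}^{s_2(K)} \frac{K}{r(s)\sqrt{r(s)^2 - K^2}}\,ds \, .
\end{align*}
The Besse condition forces this integral to be \emph{independent} of $K$ (and equal to a rational multiple of $\pi$ determined by $m$ and $n$), which is an Abel-type integral equation whose solution pins down $r$ as a function precisely of the stated form with $h$ odd.

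For the quantitative claims I would argue as follows. The requirement that all geodesics close up with common minimal period $2\pi$ fixes the normalization so that the equator has length $2\pi$; this is consistent with Lemma \ref{lemma:uniqueequator}, which guarantees the equator is unique. To compute the length of a non-equatorial prime geodesic, note that once $\Delta\theta(K)$ is constant, a geodesic closes after accumulating a total angular change that is an integer multiple of $2\pi$; the number of oscillations needed and the number of revolutions are exactly the combinatorial quantities $(1+\alpha)$ and $\tfrac{m+n}{2-\alpha}$, where the parity split through $\alpha$ reflects whether a prime geodesic must traverse its trajectory once or twice (the doubling phenomenon for even-order singular points discussed in Section \ref{sec:orbifolds}). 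The boundary values $h(\pm 1) = \tfrac{m-n}{2}$ are then forced by the smooth-orbifold conditions at the two singular points: near $s=0$ and $s=M$ the function $r$ must vanish linearly with slopes dictated by the cone angles $2\pi/m$ and $2\pi/n$, and translating these slopes into the $(R,\theta)$ coordinate yields exactly $\tfrac{m-n}{2}$ at the two ends, together with oddness of $h$.

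For the ``only if'' direction I would run the computation in reverse: given a metric in the displayed normal form, the integral $\Delta\theta(K)$ is \emph{computed} to be constant in $K$ (this is where oddness of $h$ is used decisively, to make the Abel transform collapse), so every geodesic closes after the predicted number of revolutions, giving the common period $2\pi$. \textit{The hardest step} will be the explicit evaluation showing $\Delta\theta(K)$ is independent of $K$ for the model family and, conversely, that constancy of $\Delta\theta$ forces $r$ into this family; this is an inversion of an Abel integral equation and requires care about the behaviour of the integrand at the tangency endpoints where $r(s)^2 - K^2$ vanishes. I would isolate this as the technical heart and treat the boundary/parity bookkeeping and the length count as comparatively routine consequences, citing \cite{besse78} and \cite{lange20} for the underlying rigidity of the length spectrum.
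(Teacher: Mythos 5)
The paper does not actually prove this proposition: it states it and defers entirely to \cite[Theorem~4.13]{besse78} (with the length-spectrum rigidity also covered by \cite{lange20}); the only ``proof'' in the paper is the sentence following the statement that points to that reference. Your plan therefore cannot coincide with the paper's argument — instead it reconstructs the classical Darboux--Besse argument that underlies the citation, and it does so essentially correctly: the Clairaut integral, the angular period $\Delta\theta(K)$ per oscillation, its constancy in $K$ forced by the Besse condition (a continuous function with values in the countable set $2\pi\mathbb{Q}$ is constant), the identification of the constant as $(m+n)\pi$ via the meridian limit in which a geodesic turns by roughly $k\pi$ when passing near a cone point of order $k$, and the inversion of the Abel-type equation to obtain the normal form, with the parity bookkeeping giving $(1+\alpha)$ oscillations and $\frac{m+n}{2-\alpha}$ revolutions. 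This is precisely how the cited theorem is proved, so what your route buys is self-containedness, while the paper's route buys economy: the Abel-inversion step you rightly isolate as the technical heart is exactly the content of \cite[Theorem~4.13]{besse78}, and it is genuinely nontrivial (one must both verify constancy of $\Delta\theta$ for the model family and prove that constancy \emph{forces} membership in the family, controlling the integrand at the tangency endpoints where $r(s)^2-K^2$ vanishes). Deferring that step to \cite{besse78}, as you do, is legitimate and is in effect what the paper does wholesale.

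Two corrections to your bookkeeping. First, your ``if''/``only if'' labels are swapped relative to the statement: your opening paragraph (Besse $\Rightarrow$ normal form) is the ``only if'' direction, and your closing paragraph is the ``if'' direction. Second, if you actually carry out the cone-angle computation you invoke, you find $\frac{m+n}{2}+h(1)=n$ and $\frac{m+n}{2}+h(-1)=m$ (up to relabeling the poles), i.e. $h(1)=\frac{n-m}{2}=-h(-1)$, which is exactly what oddness of $h$ requires; the condition $h(-1)=\frac{m-n}{2}=h(1)$ that you copied from the statement is incompatible with $h$ odd unless $m=n$, and is evidently a typo in the paper that your own method would have detected had the computation been written out.
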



A version of this statement for more general rotationally symmetric surfaces is stated in \cite[Theorem~4.13]{besse78}.

\subsection{First return map and first return time}\label{ssec_returnmapreturntime}
Let $\Oo$ be a $2$-orbifold homeomorphic to $S^2$ with a rotatationally symmetric metric as discussed in Section \ref{sub:rotationally_symmetric_metrics}. In order to prove a version of \cite{abbondandolo17} in the orbifold case, we will need to look at the return map of a suitable equator. More precisely, we consider an equator corresponding to a critical point $s_0 \in (0,M)$ of $r$ such that $r(s_0)$ is minimal among all equators. We define the \textit{open Birkhoff annulus} associated to the aforementioned equator (given a positive orientation) to be the set
\begin{align*}
 A := \{u \in T^1\Oo \,|\, \beta(u) \in (0,\pi), p(u) \in P_{s_0} \} \, ,
\end{align*}
where $p: TM \to M$ denotes the projection map. The geodesic flow is transverse to $A$, as the equator itself is a closed geodesic. 

\begin{lemma}\label{lemma:birkhoffannulusevolutions}\cite[Lemma~2.1]{abbondandolo18}
 Let $s_0 \in (0,M)$ be a critical point of $r$ such that $r(s_0)$ is minimal among the critical values of $r$ on $(0,M)$. Then the forward and backward evolutions under the geodesic flow of any vector in the corresponding Birkhoff annulus $A$ meet $A$ again.
\end{lemma}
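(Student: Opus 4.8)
The plan is to show that every geodesic starting from a vector in the Birkhoff annulus $A$ of the minimal equator must return to $A$, by ruling out the possibility that such a geodesic escapes to a singular point or wanders off permanently. Since the equator $P_{s_0}$ has minimal radius among all equators, the starting vector $u$ with $\beta(u)\in(0,\pi)$ has Clairaut constant $K(u)=r(s_0)\cos\beta(u)$ with $|K(u)|\le r(s_0)$, and by minimality the value $r(s_0)$ is the smallest equatorial radius. I would first treat the generic case $K(u)\neq 0$: by Lemma \ref{lemma:geodesics_surfacerev} the geodesic $\gamma$ is either asymptotic or oscillating. In the oscillating case it alternately touches two parallels $P_{s_1}, P_{s_2}$ with $r(s_1)=r(s_2)=|K|\le r(s_0)$ and $r(s)>|K|$ strictly in between; the key point is that the confining strip $\bigcup_{s\in[s_1,s_2]}P_s$ must contain $s_0$, so the geodesic crosses $P_{s_0}$ repeatedly in both directions, giving both forward and backward returns to $A$. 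I would verify the containment $s_0\in[s_1,s_2]$ using the minimality of $r(s_0)$: since $r(s_1)=|K|\le r(s_0)$, the turning points sit at or below the minimal equatorial level, forcing $s_0$ to lie in the oscillation band.

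Next I would handle the asymptotic case of Lemma \ref{lemma:geodesics_surfacerev}, where $\gamma$ limits onto equators $P_{s_-}, P_{s_+}$ with $r(s_\pm)=|K|$. Here minimality of $r(s_0)$ forces $|K|=r(s_0)$, and the only way to have $|K|=r(s_0)$ with $\beta(u)\in(0,\pi)$ is $\cos\beta(u)=\pm1$, i.e.\ $\beta(u)\in\{0,\pi\}$, which is excluded from the open annulus $A$. Thus for $u\in A$ the asymptotic alternative cannot occur, and only the oscillating behaviour (or the meridional case) remains. I would also separately dispose of the degenerate case $K(u)=0$: this corresponds to $\beta(u)=\pi/2$ (a purely meridional direction), giving a meridional arc. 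A meridional arc from a regular equator point runs toward a singular point; by the reflection/pass-through behaviour recalled in Section \ref{sec:orbifolds}, the geodesic is turned back (or continues through) and returns to cross $P_{s_0}$ again, so it too meets $A$ in forward and backward time.

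The symmetry between forward and backward evolution follows because the oscillation in Lemma \ref{lemma:geodesics_surfacerev}(ii) is bi-infinite: the geodesic touches $P_{s_1}$ and $P_{s_2}$ infinitely often as $t\to+\infty$ and as $t\to-\infty$, and each full passage across the strip crosses the level $s_0$ exactly when $\dot s=\sin\beta$ has the appropriate sign, producing intersections with $A$ (where $\beta\in(0,\pi)$ means $\dot s>0$) in both time directions. I would make the crossing count precise by noting that along the confined geodesic $s(t)$ is a nonconstant function oscillating between $s_1$ and $s_2$, so by the intermediate value theorem it attains the value $s_0$ with both signs of $\dot s$ infinitely often.

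The main obstacle I anticipate is the orbifold-specific behaviour at the singular points, which does not arise in the smooth setting of \cite[Lemma~2.1]{abbondandolo18}. For oscillating geodesics with $K(u)\neq 0$ this is a non-issue, since such geodesics stay in the regular part and never reach a singular point; the quoted lemmas apply verbatim. The delicate case is purely meridional directions ($K=0$), where I must use the precise description of meridians through singular points from the end of Section \ref{sub:rotationally_symmetric_metrics} to confirm the geodesic indeed comes back across $P_{s_0}$ rather than terminating. Fortunately, since the equator $P_{s_0}$ separates the two singular points and a meridian passes through or reflects at a singular point and returns along a meridional arc, it necessarily recrosses $P_{s_0}$, closing the argument.
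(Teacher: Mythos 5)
Your argument is correct and follows essentially the same route as the paper's own proof: rule out the asymptotic alternative of Lemma~\ref{lemma:geodesics_surfacerev} because its limiting equators would have radius $|K(u)|<r(s_0)$, contradicting minimality of $r(s_0)$ among critical values, then obtain infinitely many crossings of $P_{s_0}$ in both time directions from the oscillating alternative, with meridians treated separately via the orbifold pass-through/reflection behaviour. Two small polish points: the containment $s_0\in[s_1,s_2]$ is immediate because the geodesic starts on $P_{s_0}$ and is confined to the strip (your stated reason via the turning radii does not actually imply it), and meeting $A$ requires an upward crossing ($\sin\beta>0$), so for a meridian it is only every second recrossing of $P_{s_0}$, after passing both singular points, that lies in $A$.
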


\begin{proof}
 For $u\in A$ we denote by $\gamma_u:\R\to \Oo$ the geodesic such that $\dot\gamma_u = u$. If $\gamma_u$ is a meridian the statement holds. For the value of the Clairaut function, we have
 \begin{align*}
  |K(u)| = r(s_0) |\cos\beta(u)| < r(s_0) \, .
 \end{align*}
 By Lemma~\ref{lemma:geodesics_surfacerev}, the geodesic $\gamma_u$ can exhibit two types of behaviour if it is not a meridian. The case of $\gamma_u$ being an asymptotic geodesic cannot occur. Otherwise its forward and backward evolutions would have to be asymptotic to equators $P_{s^\pm}$ with $r(s^\pm) = |K(u)| < r(s_0)$ in contradiction to the assumption that $P_{s_0}$ is the equator of minimal length. If $\gamma_u$ is an oscillating geodesic, it is confined to the strip between two parallels $P_{s_1}$ and $P_{s_2}$ which it alternately touches infinitely many times. Consequently, $\gamma_u$ has to intersect the equator infinitely many times, alternately at angles $\pm\beta(u)$.
\end{proof}

In fact, we can express the flow saturation of $A$ as follows:

\begin{lemma}\label{lemma:flowsaturationset}\cite[Lemma~2.2]{abbondandolo18}
 Let $s_0 \in (0,M)$ be critical point of $r$ such that $r(s_0)$ is the minimal critical value of $r$ on $(0,M)$ and let $A$ be the open Birkhoff annulus associated to the positively oriented equator $P_{s_0}$. Then
 \begin{align*}
  \bigcup_{t\in\R} \phi^t(A) = \{u \in T^1S \,|\, |K(u)|<r(s_0) \} \, .
 \end{align*}
\end{lemma}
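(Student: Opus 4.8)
The goal is to prove the set equality
\[
\bigcup_{t\in\R} \phi^t(A) = \{u \in T^1S \,|\, |K(u)|<r(s_0) \} \, .
\]
The plan is to prove the two inclusions separately. For the inclusion ``$\subseteq$'', I would start with any $u \in A$. Since $\beta(u) \in (0,\pi)$ and $p(u) \in P_{s_0}$, we have $|K(u)| = r(s_0)|\cos\beta(u)| < r(s_0)$, using that $\cos\beta(u) \neq \pm 1$ on the open interval $(0,\pi)$. Because the Clairaut function $K$ is a first integral of the geodesic flow by the lemma preceding Lemma~\ref{lemma:thetaderivative}, the value $|K|$ is preserved along the whole orbit of $u$. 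Hence every point of the flow saturation of $A$ satisfies $|K|<r(s_0)$, which gives one inclusion immediately.

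For the reverse inclusion ``$\supseteq$'', I would take an arbitrary $u \in T^1S$ with $|K(u)| < r(s_0)$ and show that its orbit meets $A$. First I would dispose of the degenerate case $K(u)=0$: then $\gamma_u$ is a meridian (or meridional arc), and a meridian crosses the minimal equator $P_{s_0}$ transversally, so some flow iterate lands in $A$. If $K(u) \neq 0$, then $\gamma_u$ is not a meridional arc, and I would invoke Lemma~\ref{lemma:geodesics_surfacerev}. The asymptotic alternative (i) is excluded exactly as in the proof of Lemma~\ref{lemma:birkhoffannulusevolutions}: an asymptotic geodesic would limit onto equators of radius $|K(u)| < r(s_0)$, contradicting minimality of $r(s_0)$. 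So $\gamma_u$ must be an oscillating geodesic confined to a strip $\bigcup_{s\in[s_1,s_2]} P_s$ with $r(s_1)=r(s_2)=|K(u)|<r(s_0)$. Since $r(s_0) \le r(s)$ for $s$ in the range of $r$-values realized by equators and $r$ is continuous, the strip must contain the level $s_0$, so $\gamma_u$ repeatedly crosses $P_{s_0}$; at such a crossing the angle $\beta$ lies in $(0,\pi)$ for a suitable time orientation, placing that iterate in $A$.

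The one point requiring care, and the main obstacle, is justifying that the oscillation strip $[s_1,s_2]$ genuinely contains $s_0$ rather than lying entirely to one side of it. The key is that $P_{s_0}$ has the \emph{minimal} radius among equators, together with the structure in Lemma~\ref{lemma:geodesics_surfacerev}(ii): there $r(s_1)=r(s_2)=|K(u)|$ with $r(s)>|K(u)|$ strictly on $(s_1,s_2)$ and $r'(s_1)>0$, $r'(s_2)<0$. Since $|K(u)|<r(s_0)$ and $r(s_0)$ is a critical value with $r(s_0)\le r(s_1)$ is impossible unless $s_0\in(s_1,s_2)$, the critical point $s_0$ cannot lie outside the open strip. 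I would argue this by noting that if $s_0 \notin (s_1,s_2)$ then by continuity $r$ would have to attain a value below $r(s_0)$ along an interval bounded away from the oscillation region while still producing an equator of radius less than $r(s_0)$, contradicting minimality; alternatively one verifies directly that the strip, being the maximal interval on which $r>|K(u)|$ around the oscillation, must engulf the global minimum $s_0$ of the equator radii. Once $s_0\in(s_1,s_2)$ is established, transversality of the flow to $A$ (already observed since $P_{s_0}$ is a closed geodesic) guarantees an honest intersection of the orbit with the open annulus, completing the proof.
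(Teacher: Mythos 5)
Your proof is correct, and it is worth noting that the paper never proves this lemma at all: it is stated with a citation to \cite[Lemma~2.2]{abbondandolo18}, and the only proof in the paper of a comparable statement is that of Lemma~\ref{lemma:birkhoffannulusevolutions}, which uses exactly the dichotomy you invoke (meridians handled separately, the asymptotic alternative of Lemma~\ref{lemma:geodesics_surfacerev} excluded by minimality of $r(s_0)$ among critical values, oscillating geodesics crossing the equator). Your argument matches that template, adds the easy forward inclusion via the first integral $K$, and correctly identifies the one genuinely new ingredient: that the oscillation strip $[s_1,s_2]$ of an arbitrary $u$ with $|K(u)|<r(s_0)$ must contain $s_0$ (in Lemma~\ref{lemma:birkhoffannulusevolutions} this is automatic because the geodesic starts on $P_{s_0}$). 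However, your justification of that step is loosely worded and one sentence is garbled; the clean argument is: if, say, $s_0<s_1$, then $r(s_0)>|K(u)|=r(s_1)$ together with $r'(s_1)>0$ forces the minimum of $r$ on $[s_0,s_1]$ to be attained at an interior point, which is then a critical point of $r$ in $(0,M)$ with critical value $<r(s_1)<r(s_0)$, contradicting minimality of the critical value $r(s_0)$; this single observation replaces both of your sketched alternatives. Two further small points: in the case $K(u)=0$, a downward-pointing meridional vector meets $A$ only after its geodesic is continued through a singular point (passing through or reflecting, as in Section~\ref{sub:rotationally_symmetric_metrics}), so the lemma must be read with the flow on the full orbifold unit tangent bundle $T^1S$ rather than the incomplete flow on the regular part -- with that reading, every orbifold meridian contains an upward crossing of $P_{s_0}$ and your conclusion stands; and the phrase ``for a suitable time orientation'' should be replaced by the observation that an oscillating geodesic crosses $P_{s_0}$ both upward and downward, and the upward crossings have $\beta\in(0,\pi)$, hence lie in $A$.
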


Lemma~\ref{lemma:birkhoffannulusevolutions} guarantees that the maps given in the following definition are well-defined.

\begin{defn}
 Let $s_0 \in (0,M)$ be a critical point of $r$ such that $r(s_0)$ is the minimal critical value of $r$ on $(0,M)$ and let $A$ be the open Birkhoff annulus associated to the positively oriented equator $P_{s_0}$. We define the \textit{first return time} $\tau$ and the \textit{first return map} $\varphi$ to be the maps
 \begin{alignat*}{2}
  \tau &: A \to (0,+\infty) \, , \quad && \tau(u) := \min \{t\in(0,+\infty) \,|\, \phi^t(u) \in A \} \, , \\
  \varphi &: A \to A \, , \quad && \varphi(u) := \varphi^{\tau(u)}(u) \, .
 \end{alignat*}
\end{defn}

Again, let $s_0 \in (0,M)$ be a critical point of $r$ such that $r(s_0)$ is the minimal critical value of $r$ on $(0,M)$ and let $A$ be the open Birkhoff annulus associated to the positively oriented equator $P_{s_0}$. For points on $A$, we set 
\begin{align}\label{eq:coord_xi_eta}
\begin{split}
 \xi  &:= r(s_0)\theta \in \R/L\Z \, , \\
 \eta &:= -\cos\beta \in (-1,1) \, ,
\end{split}
\end{align}
where $L:=2\pi r(s_0)$ is the length of the equator $P_{s_0}$ and $\theta$ and $\beta$ are defined as before. Then $(\xi,\eta) \in \R/L\Z \times (-1,1)$ is a set of coordinates on $A$. The Hilbert contact form \eqref{eq:hilbertcontactform} restricted to $A$ takes the form
\begin{align*}
 \lambda := \cos\beta d\xi = -\eta d\xi
\end{align*}
with respect to these coordinates. In terms of $(\xi,\eta)$, the proof of the following result, which we will use hereafter, reduces to a short calculation.

\begin{lemma}\cite[Section~2.2]{abbondandolo17}
 Let $A \cong \R/L\Z \times (-1,1)$ be the Birkhoff annulus associated to a positively oriented equator of minimal length $L$ among all equators. The Hilbert contact form restricted to $A$, the first return time and the first return map to $A$ are related by
 \begin{align*}
  \varphi^* \lambda - \lambda = d\tau \, .
 \end{align*}
\end{lemma}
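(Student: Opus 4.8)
The plan is to establish the relation $\varphi^*\lambda - \lambda = d\tau$ by relating the Hilbert contact form on the Birkhoff annulus to the primitive structure of the Reeb (geodesic) flow. The key observation is that $\lambda = \alpha|_A$ is the restriction of the Hilbert contact form $\alpha$, and the first return map $\varphi$ is obtained by flowing along the Reeb vector field $R$ of $\alpha$ for the first return time $\tau$. The fundamental fact I would exploit is that the Reeb flow preserves the contact form: since $R$ satisfies $\alpha(R) = 1$ and $\iota_R d\alpha = 0$, Cartan's formula gives $\mathcal{L}_R \alpha = \iota_R d\alpha + d(\iota_R \alpha) = 0$, so the flow $\phi^t$ satisfies $(\phi^t)^*\alpha = \alpha$ for every fixed $t$.

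First I would compute the pullback $\varphi^*\lambda$ directly. Writing $\varphi(u) = \phi^{\tau(u)}(u)$ as the composition of the flow with the time function, the differential of $\varphi$ involves both the spatial derivative of the flow and the derivative of $\tau$. Concretely, for a tangent vector $v$ to $A$ at $u$, one has $d\varphi(v) = d\phi^{\tau(u)}(v) + d\tau(v)\, R(\varphi(u))$, since differentiating in the time slot produces the Reeb vector field (the flow direction) evaluated at the image point. Pulling back $\alpha$ and using $\alpha(R) = 1$ at $\varphi(u) \in A$ then yields
\begin{align*}
 (\varphi^*\alpha)(v) = \alpha\big(d\phi^{\tau(u)}(v)\big) + d\tau(v)\,\alpha(R) = \big((\phi^{\tau(u)})^*\alpha\big)(v) + d\tau(v).
\end{align*}
By the invariance $(\phi^t)^*\alpha = \alpha$ established above, the first term on the right is just $\alpha(v)$, giving $\varphi^*\alpha = \alpha + d\tau$ as forms on $T^1U$ restricted to $A$.

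The remaining step is to pass from $\alpha$ to its restriction $\lambda = \alpha|_A$. Since $A$ is a submanifold and both sides of the identity are forms whose restriction to $A$ is what we want, restricting the relation $\varphi^*\alpha = \alpha + d\tau$ along $A$ gives $\varphi^*\lambda - \lambda = d\tau$, using that $\tau$ is a function on $A$ so $d\tau$ is intrinsic to $A$, and that $\varphi$ maps $A$ to $A$ so $\varphi^*(\alpha|_A) = (\varphi^*\alpha)|_A$. In the explicit coordinates $(\xi,\eta)$ with $\lambda = -\eta\, d\xi$, the same identity can be verified by a short direct calculation, which is the route the excerpt alludes to.

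The main conceptual point — and the only place requiring care — is the term produced by differentiating the return time: one must correctly account for the fact that $\tau$ is not constant along $A$, so that $d\varphi$ picks up the Reeb direction weighted by $d\tau$. The normalization $\alpha(R) = 1$ is precisely what makes this term contribute $d\tau$ rather than a multiple of it; this is where the Reeb (as opposed to an arbitrary) vector field is essential. Everything else is a formal consequence of the $\mathcal{L}_R\alpha = 0$ invariance, so I expect no genuine obstacle beyond bookkeeping.
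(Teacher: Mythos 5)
Your argument is correct, and it is essentially the proof of the source the paper cites, \cite[Section~2.2]{abbondandolo17}, rather than the route the paper itself gestures at. The paper gives no proof of this lemma: it defers to the citation and remarks that ``in terms of $(\xi,\eta)$ the proof reduces to a short calculation''. That coordinate remark is not a substitute for what you did: by rotational symmetry and the Clairaut integral one has $\varphi(\xi,\eta)=(\xi+f(\eta)+\alpha L/2,\eta)$ and $\tau=\tau(\eta)$, so both sides of the identity are multiples of $d\eta$ and the claim reduces to the scalar identity $\tau'(\eta)=-\eta f'(\eta)$ --- but that identity is precisely the nontrivial content of the lemma, and proving it requires either an integral computation along the oscillating geodesic arcs or exactly the contact-geometric input you supply (the chain rule applied to $\varphi=\phi^{\tau}$, $\alpha(R)=1$, and $(\phi^t)^*\alpha=\alpha$ from $\mathcal{L}_R\alpha=0$). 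Your coordinate-free argument is the better one: it uses no symmetry at all and proves the statement for any surface of section of any Reeb flow, which is also why it is the argument of record in the cited literature; the coordinate picture only buys the a posteriori simplification that everything depends on $\eta$ alone.

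One point you share with the paper and should patch in the orbifold setting: for $\eta=0$ the vector $u$ is meridional, its geodesic hits the singular points, and the flow $\phi^t$ on $T^1U$ (which is all the paper defines, on the maximal domain $\Omega$) does not reach the return time, so the formula $\varphi(u)=\phi^{\tau(u)}(u)$ and your differentiation of it are literally valid only on $\{\eta\neq 0\}$. This is harmless but worth a sentence: either observe that the Hilbert contact form and its Reeb flow extend smoothly to the whole lens space $T^1\Oo$, where the geodesic flow is complete, and run your argument there; or note that $\varphi^*\lambda-\lambda$ and $d\tau$ are continuous $1$-forms on $A$ that agree on the dense open set $\{\eta\neq 0\}$, hence everywhere.
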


\subsection{Unit tangent bundles and lens spaces}\label{sec:tangentbundles}

For the sharp systolic inequality that we intend to prove for spindle orbifolds we need to understand the topology of the corresponding unit tangent bundles. Recall that a Riemannian orbifold is locally isometric to the quotient of a Riemannian manifold $M$ by an isometric action of a finite group $G$. This action induces an action on the unit tangent bundle of $M$, and the quotient of this action is by definition the unit tangent bundle of $M/G$. The unit tangent bundle of the orbifold can then be obtained by gluing together such local pieces. 

In case of a $2$-orbifold with isolated singularities the action of $G$ on the unit tangent bundle of $M$ is free and so the unit tangent bundle is a manifold in this case. For spindle orbifolds its topology is determined by the following lemma.

\begin{lemma}\label{lemma:lensspace}\cite[Lemma~3.1]{lange20}
 Let $\Oo$ be a $S^2(m,n)$ spindle orbifold. Then $M=T^1\Oo$ is a lens space of type $M \cong L(m+n,1)$.
\end{lemma}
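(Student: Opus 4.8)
\subsection*{Proof proposal}

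The plan is to assemble $T^1\Oo$ from the local pieces over the two singular points and to recognise the result as a genus-one Heegaard splitting. I would decompose $\Oo = S^2(m,n)$ along the equator $E$ into two closed orbifold discs $D_m$ and $D_n$, where $D_k$ is a neighbourhood of the order-$k$ cone point, isometric to a quotient $D^2/\Z_k$ with $\Z_k$ acting by rotation. Over each disc the unit tangent bundle is $T^1 D_k = (T^1 D^2)/\Z_k$. Since $T^1 D^2 \cong D^2\times S^1$ and the $\Z_k$-action is free (it rotates the fibre together with the base), the quotient $V_k := T^1 D_k$ is again a solid torus. Hence $T^1\Oo = V_m \cup_T V_n$ is glued from two solid tori along the torus $T := T^1\Oo|_E$, so it is a lens space (the cases $S^3$ and $S^1\times S^2$ are excluded a posteriori by the homology computation). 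Everything then reduces to computing, in a fixed basis of $H_1(T)\cong\Z^2$, the two meridians $\mu_m,\mu_n$ --- the curves bounding discs in $V_m$ and $V_n$ --- since the resulting manifold is $L(p,q)$ with $p = |\det(\mu_m,\mu_n)|$.

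To analyse the local piece I write a unit vector in $T^1 D^2$ over $z\in D^2$ as $(z,\psi)$, where $\psi$ is its angle with respect to the standard frame of $\R^2$; then $\Z_k$ acts by $(z,\psi)\mapsto (\zeta z,\psi + 2\pi/k)$ with $\zeta = e^{2\pi i/k}$, rotating base and fibre simultaneously. The function $u := z\,e^{-i\psi}$ is invariant, and $(z,\psi)\mapsto(u,\psi)$ conjugates the action to a rotation of the fibre alone; this exhibits $V_k \cong D^2\times(S^1/\Z_k)$ explicitly, with meridian the $u$-circle and longitude the fibre circle. On the boundary torus $T$ I take as basis the homology classes of the fibre $f$ (the unit tangent circle over a point of $E$) and of the horizontal section $b$ (the vectors tangent to $E$ in the positive direction, as the footpoint runs once around $E$). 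Tracking these curves through the above coordinates --- using that the equator is covered $k$-to-one in the uniformising chart, so that ``once around $E$'' advances the chart angle by $2\pi/k$ --- gives $f = -\mu_k + k\,b$ and identifies the meridian as $\mu_k = -f + k\,b$, measured from the order-$k$ side.

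The decisive point is the orientation bookkeeping in the gluing. Fixing a global orientation of $\Oo$ forces the uniformising charts at the two cone points to have opposite rotational senses relative to the global angle $\theta$; concretely the chart at the $n$-point is compatible with the orientation only after reversing the sign of $\theta$. Consequently the horizontal section is read with opposite sign from the two caps, which turns the naive $\mu_n = -f + n\,b$ into $\mu_n = -f - n\,b$. This sign is exactly what distinguishes the right answer from the wrong one: with
\begin{equation*}
 \mu_m = -f + m\,b, \qquad \mu_n = -f - n\,b
\end{equation*}
one computes $\det(\mu_m,\mu_n) = m+n$, whereas neglecting the reversal would produce the spurious value $m-n$. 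Expressing $\mu_n$ in the basis $(\mu_m,\lambda_m)$ of $H_1(T)$, where $\lambda_m = b$ is the longitude of $V_m$, gives $\mu_n = \mu_m - (m+n)\lambda_m$; the coefficient $\pm1$ of $\mu_m$ identifies the splitting as $L(m+n,1)$. As a consistency check, for $m=n$ the orbifold $S^2(m,m) = S^2/\Z_m$ is developable and $T^1\Oo = \mathrm{SO}(3)/\Z_m \cong S^3/\Z_{2m} = L(2m,1)$, in agreement, and the case $m=n=1$ recovers $T^1S^2 = \RP^3 = L(2,1)$. The main obstacle is therefore not the algebra but getting the relative orientation of the two caps correct, since it is precisely this sign that yields $m+n$ rather than $|m-n|$.
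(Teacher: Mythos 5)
Your proposal is correct and takes essentially the same route as the paper's (cited) proof: cut along the equator, realize the unit tangent bundles of the two cone caps as solid tori via the diagonal $\Z_m$- and $\Z_n$-actions on $T^1D^2\cong D^2\times S^1$, and identify the resulting genus-one splitting from the gluing data as in Proposition~\ref{prop:gluinglensspace}. Your explicit meridian computation $\mu_m=-f+m\,b$, $\mu_n=-f-n\,b$, with the orientation reversal between the two caps that yields $m+n$ rather than $|m-n|$, is a correct fleshing-out of the gluing analysis that the paper defers to \cite[Lemma~3.1]{lange20}.
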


We provide some further background on the statement and the proof of this lemma, because a better understanding will be required in Section \ref{chapter:topology_t1o} below.

First recall the definition of a lens space.

\begin{defn}
 Let $p$ and $q$ be integers with $\mathrm{gcd}(p,q)=1$ and consider the unit sphere $S^3 \subset \mathbb{C}^2$. The orbit space of the free action of the cyclic group $\Z_p$ on $S^3$ defined by
 \begin{align*}
  (z_1,z_2) \mapsto (e^{2\pi i p} \cdot z_1, e^{2\pi i q / p} \cdot z_2)
 \end{align*}
 is called the \textit{lens space} of type $L(p,q)$. In particular, the fundamental group of such a lens space is isomorphic to $\Z_p$.
\end{defn}

Alternatively, a lens space can be obtained by gluing together two solid tori along their boundary through a homeomorphism. To make this more precise we introduce the following notation. Let $T$ be a solid torus, i.e. $T$ is homeomorphic to $D^2 \times S^1$. We call a curve a \textit{meridian} if it is the image of $S^1 \times \{1\}$ under some homeomorphism $D^2\times S^1 \to T$. We call a curve a \textit{longitude} if it is the image of $\{1\} \times S^1$ under some homeomorphism $D^2 \times S^1 \to T$.

 With these notions at hand, we can state the following result.
 
 \begin{prop}\label{prop:gluinglensspace}Let $T_1$ and $T_2$ be solid tori and let $\psi: \partial T_1 \to \partial T_2$ be a diffeomorphism. For $i=1,2$, let $m_i$ be meridians and $l_i$ be longitudes on $T_i$. If $\psi(m_1)$ is freely homotopic to $s m_2 + r l_2$ on $\partial T_2$, the space $T_1 \cup_\psi T_2$ is a $L(r,-s)$ lens space. (see \cite[Theorem~1.3.4]{brin07} and \cite[Theorem~4.3]{jankins83}) 
 \end{prop}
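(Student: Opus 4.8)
The plan is to reduce the gluing to attaching a single handle to $T_2$, so that the homeomorphism type of $X := T_1 \cup_\psi T_2$ depends only on the isotopy class of the curve $c := \psi(m_1)$ on $\partial T_2$, and then to match this curve against the genus-one Heegaard splitting coming from the quotient definition of the lens space. First I would observe that the solid torus $T_1$, viewed with its boundary torus as the attaching region, has a handle decomposition consisting of a single $2$-handle glued along (an annular neighbourhood of) the meridian $m_1$, together with a $3$-handle capping off the remaining $S^2$ boundary. Hence building $X$ amounts to attaching a $2$-handle to $T_2$ along $c = \psi(m_1) = s\,m_2 + r\,l_2$ and then filling the resulting spherical boundary component with a $3$-ball. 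Since any homeomorphism of $S^2$ extends over $B^3$ (Alexander), the capping is unique up to homeomorphism, and attaching a $2$-handle depends only on the isotopy class of its attaching curve (the framing being the canonical surface framing). Thus $X$ depends only on the unoriented isotopy class of $c$, which is determined by the pair $(s,r)$ up to a common sign. Note also that $c$ is a simple closed curve, being the diffeomorphic image of the meridian, so $\mathrm{gcd}(s,r)=1$.

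As a first sanity check I would compute $\pi_1(X)$ by van Kampen applied to slight thickenings of $T_1$ and $T_2$: since $m_i$ dies and $l_i$ generates $\pi_1(T_i)\cong\Z$, the group is the amalgam $\langle a,b \mid rb = 0,\ a = v b\rangle \cong \Z/r\Z$, where $\psi_*(l_1) = u\,m_2 + v\,l_2$ and $sv-ru=\pm1$. This matches $\pi_1(L(r,-s))\cong\Z_r$, but of course does not by itself distinguish lens spaces with the same first homology, so a finer argument is needed.

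To pin down the homeomorphism type I would build the standard model directly. Write $S^3=\{(z_1,z_2)\in\Co^2 : |z_1|^2+|z_2|^2=1\}$ and let $\Z_r$ act by $(z_1,z_2)\mapsto(\zeta z_1,\zeta^{-s}z_2)$ with $\zeta=e^{2\pi i/r}$; by the paper's convention (with second weight $q\equiv -s$) the quotient is $L(r,-s)$. This action preserves the two standard solid tori $V_1=\{|z_1|\le 1/\sqrt2\}$ and $V_2=\{|z_2|\le 1/\sqrt2\}$ and the Clifford torus $T$ between them, so it descends to a genus-one Heegaard splitting $L(r,-s)=\overline V_1\cup\overline V_2$ along $\overline T=T/\Z_r$. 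On the Clifford torus, with angular coordinates $(\phi_1,\phi_2)$ given by $z_j=e^{i\phi_j}/\sqrt2$, the curves $m_1,l_1,m_2,l_2$ are the coordinate circles; I would track how the quotient by the weighted action changes the integral homology basis of $\overline T$ and compute the image of the boundary of the meridian disk of $\overline V_1$, verifying that it equals $s\,\overline m_2 + r\,\overline l_2$. By the reduction of the first paragraph this identifies $X\cong L(r,-s)$.

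The delicate point, and essentially the only one, is the orientation and basis bookkeeping on the quotient torus $\overline T$: the covering $T\to\overline T$ is $r$-fold, so the images of $m_2$ and $l_2$ generate only an index-$r$ sublattice of $H_1(\overline T;\Z)$ and are no longer a primitive integral basis; one must choose primitive generators and check that the resulting change-of-basis matrix lies in $\GL_2(\Z)$ with the determinant sign that produces $L(r,-s)$ rather than, say, its orientation reversal $L(r,s)$. Everything else is formal. In the write-up I would either push this linear-algebra computation through explicitly or, once the problem has been reduced to the single gluing curve $c = s\,m_2 + r\,l_2$ as above, simply appeal to \cite[Theorem~1.3.4]{brin07} and \cite[Theorem~4.3]{jankins83}.
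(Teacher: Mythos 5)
Your proposal is correct, but note that the paper itself gives no proof of this proposition: it is quoted from the literature, and the citations to Brin and to Jankins--Neumann stand in for an argument. So the comparison here is between your self-contained proof and a bare citation, and what you outline is essentially the standard argument contained in those references. All three of your steps are sound. The reduction of $T_1\cup_\psi T_2$ to a $2$-handle attachment along $c=\psi(m_1)$ plus an Alexander capping is the classical argument that such a gluing depends only on the unoriented isotopy class of the image of the meridian; here you implicitly use that freely homotopic essential simple closed curves on a torus are isotopic, which is what lets the ``freely homotopic'' hypothesis pin down $c$ up to sign. The van Kampen computation giving $\Z/r\Z$ is right. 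And the lattice bookkeeping you correctly identify as the crux does come out as you predict: for the action $(z_1,z_2)\mapsto(\zeta z_1,\zeta^{-s}z_2)$, $\zeta=e^{2\pi i/r}$, one gets $\overline T=\R^2/\Lambda$ with $\Lambda=\langle f_1,f_2\rangle$, $f_1=(2\pi/r,-2\pi s/r)$, $f_2=(0,2\pi)$; the meridian of $\overline V_1$ is the image of the $\phi_1$-circle, i.e.\ the primitive class $(2\pi,0)=rf_1+sf_2$, while $\overline m_2=f_2$ and $\overline l_2=f_1$ serve as meridian and longitude of $\overline V_2$. (As you anticipate, the longitude of $\overline V_2$ is \emph{not} the image of $l_2$: that image is the non-primitive class $rf_1+sf_2$, necessarily equal to $\overline m_1$ since $m_1$ and $l_2$ are both the $\phi_1$-circle.) Hence the meridian of $\overline V_1$ equals $s\,\overline m_2+r\,\overline l_2$, and the quotient is $L(r,-s)$ in the paper's convention, which closes your argument. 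Two small points for a write-up: the sign ambiguity you worry about is harmless, since replacing $(s,r)$ by $(-s,-r)$ replaces $L(r,-s)$ by $L(r,s)$, which is homeomorphic to $L(r,-s)$ by an orientation-reversing map, so the proposition is (and can only be) a statement about unoriented homeomorphism type; and the degenerate case $r=0$, where the glued space is $S^2\times S^1$, is covered neither by the paper's definition of $L(p,q)$ nor by your quotient model, but this is irrelevant for the paper, which applies the proposition with $r=m+n\ge 2$. What your route buys over the citation is precisely the convention check: lens-space sign conventions vary from reference to reference, and your computation verifies the formula $L(r,-s)$ directly against the paper's own definition of a lens space.
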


Now lemma \ref{lemma:lensspace} is proved in \cite[Lemma~3.1]{lange20} by cutting $S^2(m,n)$ along an equator into two disks $D_1$ and $D_2$. Each of these two disks is a quotient of a regular disk $D^2$ by a cyclic group that acts via rotations of order $m$ and $n$, respectively. These two actions lift to diagonal actions on the unit tangent bundles $T^1 D^2 \cong D^2 \times S^1$ so that the corresponding quotients, the unit tangent bundles of $D_1$ and $D_2$, are solid tori as well. Then the proof of \cite[Lemma~3.1]{lange20} analyzes how these two solid tori are glued together and concludes the claim via an application of Proposition \ref{prop:gluinglensspace}.

\section{Systolic inequalities on orbifolds}\label{sec:surfacerevolution}

The \textit{systolic ratio} on a Riemannnian $2$-sphere $(S^2,g)$ is defined as
\begin{align*}
 \rho_\text{sys} = \frac{\ell_\text{vol}^2}{\mathrm{area}_g(S^2)} \, ,
\end{align*}
where $\ell_\text{min}$ denotes the length of the shortest closed geodesic on $(S^2,g)$. By a result of Croke this ratio is bounded from above by a constant independent of the Riemannian metric. A smooth local maximizer of this systolic ratio is Zoll, but an extremizer of the systolic ratio does not have to be smooth. In fact, the optimal upper bound is conjuctured to be attained by the singular Calabi sphere, two copies of an equilateral triangle glued together along their boundary. Nevertheless, in \cite[Theorem~1]{abbondandolo18} the authors show that on a smooth surfaces of revolution $\rho_\text{sys}$ is bounded from above by $\pi$ and that this upper bound is attained if and only if the surface is Zoll.

The above systolic ratio can also be defined for Riemannian spindle orbifolds. The authors expect that the systolic ratio is globally bounded in this case as well, but they are not aware of a proof. We point out that such a bound cannot be easily obtained from the sphere case by smoothing out the singularities, since the angles at which limits of geodesics hit the singular points cannot be controlled.

As a first possible attempt to generalize \cite[Theorem~1]{abbondandolo18} to rotationally symmetric spindle orbifolds one might conjecture the following statement: The systolic ratio $\rho_\text{sys}$ on a rotationally symmetric $S^2(m,n)$ spindle orbifold is bounded from above and the upper bound is attained if and only if the orbifold is Besse. The following example shows that this attempt is bound to fail.

Firstly, we calculate $\rho_\text{sys}$ for a rotationally symmetric $S^2(m,n)$ Besse orbifold. By Corollary~\ref{thm:tannery_orbifold}, we are given an explicit expression for the metric of any such orbifold (normalized so that the equator has length $2\pi$) which we use to compute the area of $\Oo=S^2(m,n)$. As before we denote the regular part of $\Oo$ by $U$ and compute
 \begin{align}\label{eq:besseorbifold_area}
 \begin{split}
  \mathrm{area}(\Oo) &= \int_U \sqrt{|g|} dA \\
   &= \int_0^{2\pi} \int_0^\pi \left[\frac{m+n}{2}+h(\cos r) \right] \sin r \, dr\, d\theta \\
   &= 2\pi(m+n) + \int_0^\pi h(\cos r) \sin r \, dr \\
   &= 2\pi(m+n) + \int_{-1}^1 h(v) \, dv
   = 2\pi(m+n) \ ,
 \end{split}
 \end{align}
 where the last integral vanishes as $h$ is an odd function. From Corollary~\ref{thm:tannery_orbifold}, we know that the shortest closed geodesic is the equator which has length $2\pi$. Hence we obtain
 \begin{align*}
  \rho_\text{sys}(S^2_\text{Besse}(m,n)) = \frac{2\pi}{m+n} \, .
 \end{align*}
However, we know that for a smooth surface of revolution (that is in the case $m=n=1$) $\rho_\text{sys}$ is bounded from above by $\pi$ \cite[Theorem~1]{abbondandolo18}. In particular, in the case of the round sphere, determined by the enveloping curve $\sigma: \R/2\pi\Z \to \R^2$, $\sigma(s) = (r(s),z(s)) = (\sin s,\cos s)$ as discussed in Section~\ref{sec:surfacerevolution}, the systolic ratio is $\pi$. We can change the component function $r$ on $[0,0+\varepsilon]$ and $[M-\varepsilon,M]$, i.e. in a neighbourhood of the poles, to obtain a $S^2(m,n)$ spindle orbifold as resulting surface of revolution, see Figure \ref{fig:deformedsphere}. It will differ from the standard sphere on the sets
\begin{align*}
 U_N := \bigcup\limits_{s\in[0,\varepsilon]} P_s \, , \quad U_S := \bigcup\limits_{s\in[M-\varepsilon,M]} P_s \, .
\end{align*}
By Lemma~\ref{lemma:geodesics_surfacerev}, we know that closed geodesics on this orbifold oscillate between two parallels of the same length. If we change $r$ in such a way that it remains monotonously increasing on $[0,0+\varepsilon]$ and monotonously decreasing on $[M-\varepsilon,M]$, we will not have created any additional equators. Moreover, any closed geodesic that intersects $U_N$ or $U_S$, that is, any closed geodesic which does not coincide with one on the standard sphere, still has to pass $S\setminus (U_N \cup U_S)$ twice. 

\begin{figure}[t]
  \centering
  \begin{overpic}[width=0.5\textwidth]{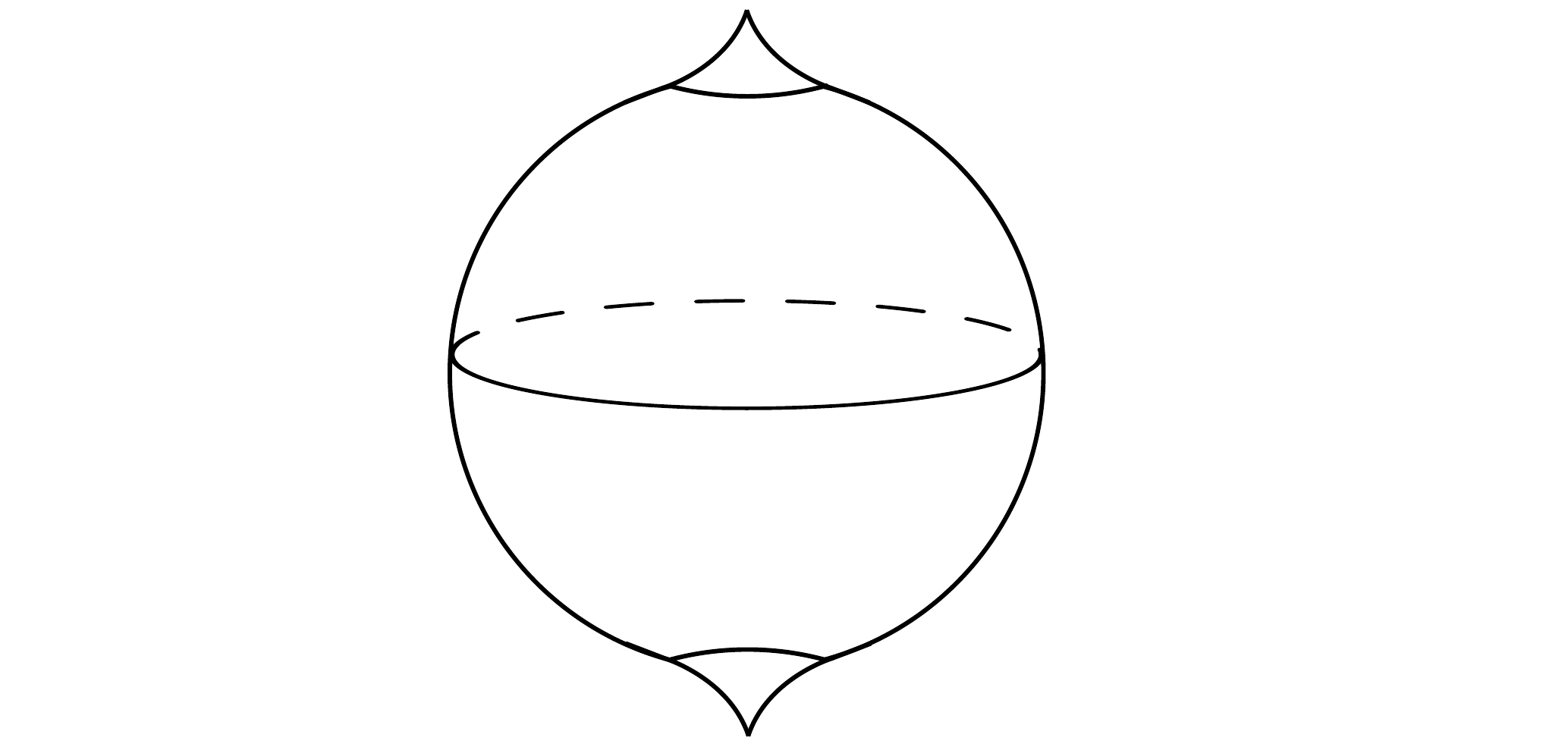}
   \put(52,45){$U_N$}
   \put(52,0.5){$U_S$}
  \end{overpic}
  \caption{Deforming $S^2$ into a spindle orbifold.}\label{fig:deformedsphere}
 \end{figure}

Consequently, the length of any closed geodesic is either $2\pi$ (the equator) or at least twice the distance of $U_N$ and $U_S$. For $\varepsilon\to 0$ that last quantity converges to $2\pi$ and the area of the spindle orbifold converges to that of a sphere. We have thus constructed a sequence of $S^2(m,n)$ spindle orbifolds for which $\rho_\text{sys}$ converges towards $\pi\geq\rho_\text{sys}(S^2_\text{Besse}(m,n))$ independently of $m$ and $n$. Hence, the naive generalization does not work.

 The above discussion can be rephrased as follows: We are interested in whether the Besse metrics maximize the systolic ratio among rotationally symmetric metrics on $S^2(m,n)$ spindle orbifolds and conclude that they fail to be global maximizers.  Even more so, following an argument similar to the one used by J. C. \'{A}lvarez Paiva and F. Balacheff in \cite{paiva14}, one can see that they even cannot be local maximizers: We start with a rotationally symmetric Besse metric on a $S^2(m,n)$ spindle orbifold. We then disturb the metric in a small neighbourhood $V$ of a parallel far away from the equator such that the area decreases. Let $\phi_t$ denote the geodesic flow on $T^1\Oo$. Before the perturbation, the equator was the shortest closed geodesic with length to $2\pi$ and all other geodesics were closed as well with a length greater than or equal $4\pi$ for $n+m \ge 3$ (see Corollary~\ref{thm:tannery_orbifold}). This implies that $\phi_{2\pi}(u) \neq u$ for all $u \in T^1V$ which remains true for sufficiently small perturbations. As such, any closed geodesic created by perturbing the metric on $U$ cannot have length shorter or equal to $2\pi$.  The equator is unchanged under our deformation and thus remains the shortest closed geodesic. Consequently, the systolic ratio of the deformed orbifold has to be larger than that of the Besse orbifold. \\
 
 In light of the discussion above, we consider a different notion of systolic ratio.
\begin{defn}
 We define the \textit{contractible systolic ratio} of a spindle orbifold $\mathcal{O}=S^2(m,n)$ to be the quantity
 \begin{align*}
  \rho_\text{contr}(\Oo) = \frac{\ell_\text{min,contr}^2}{\mathrm{area}(\Oo)} \, ,
 \end{align*}
 where $\ell_\text{min,contr}$ denotes the length of the shortest closed geodesic whose lift to the unit tangent bundle $T^1\Oo$ is contractible.
\end{defn}

With this notion we will be able to prove Theorem \ref{thm:A}. For that we need to understand, when a lift of a closed geodesic to the unit tangent bundle is contractible. This will be achieved in the following section.

\subsection{Lifting curves to the unit tangent bundle}\label{chapter:topology_t1o}

  \begin{figure}[b] 
  \centering
  \begin{overpic}[width=0.7\textwidth]{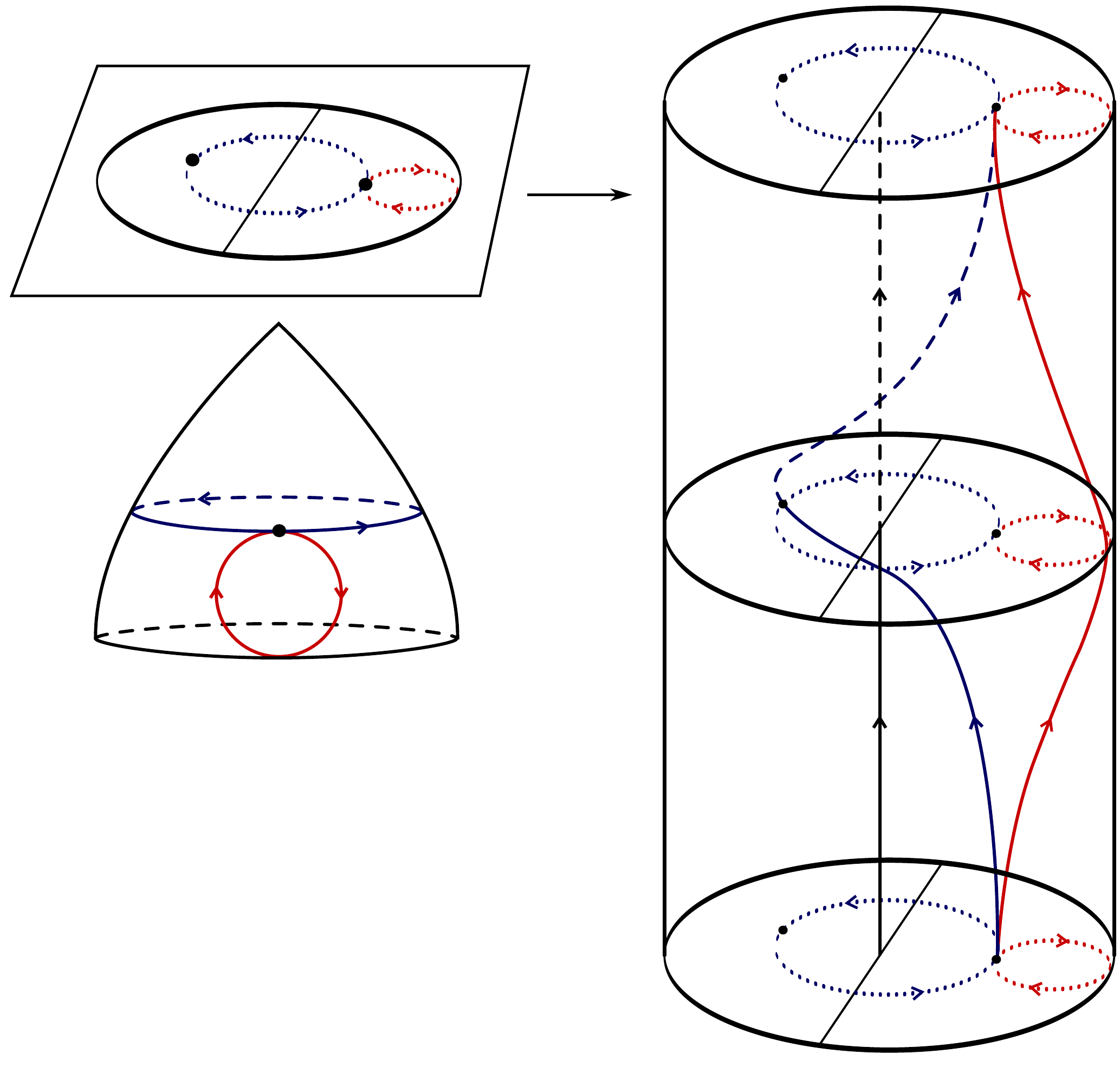}
   \put(6,48.5){$\color{figureblue} c_1$}
   \put(33.5,41.5){$\color{figurered} c_2$}
  \end{overpic}
  \caption{Exemplary simple closed curves on an orbifold and their lift to a chart of the universal covering of the unit tangent bundle (in the case of a singularity of degree $2$).}\label{fig:covering}
 \end{figure}

Let $\Orb$ be a spindle orbifold of type $S^2(m,n)$. We call a closed curve ${\gamma:[0,1]\to \Oo}$ \emph{regular} if it is smooth (in the sense that it admits smooth lifts to manifold charts and that the velocities at the boundary match up smoothly) and has nowhere vanishing velocity. A regular curve can be lifted to the unit tangent bundle of $\Orb$ as the curve $(\gamma(t),\dot\gamma(t) \|\dot\gamma(t)\|^{-1})$. We will need to understand under which conditions on $\gamma$ this lift is contractible. Recall from Lemma \ref{lemma:lensspace} that the unit tangent bundle of $\Orb$ is a lens space of type $L(m+n,1)$ and hence its fundamental group isomorphic to $\Z_{m+n}$. A priorily the lift of $\gamma$ represents any element in this fundamental group. Clearly, the homotopy class of this lift is invariant under isotopic deformations of $\gamma$. Therefore, to settle the general problem, we can assume without loss of generality that the curve $\gamma$ stays in the regular part of $\Orb$. Any such regular curve in the regular part is in turn isotopic to a concatenation of a finite number of copies of two specific regular curves, $c_1$, homotopic to an equator of $\Orb$, and $c_2$, a regular simple closed curve in the regular part of $\Orb$ which is contractible within this regular part. Hence, it suffices to identify the homotopy classes of the lifts of $c_1$ and $c_2$ in order to answer the general question. 

We can assume that the two curves $c_1$ and $c_2$ are contained in the complement $D_1$ of a small open ball around the singular point of order $n$. The disk $D_1$ is a quotient of a disk $D^2$ by a cyclic group $\Z_m$ that acts via rotations on $D^2$. In Section \ref{sec:tangentbundles} we have seen that the unit tangent bundles of $D^2$ and $D_1$ are both tori and that $\Z_m$ acts diagonally on $T^1 D^2 = D^2 \times S^1$ with quotient $T_1:=T^1 D_1$. Moreover, there we have seen that the unit tangent bundle of $\Orb$ can be obtained by gluing together $T_1$ with its complementary torus along there common boundary. The critical observation for our argument now is that the central fiber of the torus $T_1$ generates the fundamental group of the resulting lens space $T^1 \Orb$ which can be seen by an application of the Seifert-van-Kampen theorem.

The curve $c_2$ lifts to a simple closed curve in $D^2$. Hence, its lift to $T^1 D^2$ is homotopic to the central fiber of this torus, and its lift $T_1$ is homotopic to $m$ times the central fiber of the latter torus, cf. Figure \ref{fig:covering}. The curve $c_1$ does not lift to a closed curve in $D^2$, but its lift to $T_1$ is homotopic to the central fiber,  cf. Figure \ref{fig:covering}. In particular, we see that the lift of $c_1$ generates the fundamental group of $T^1 \Orb$, whereas the lift of $c_2$ becomes contractible after being iterated $k$-times, where

\begin{align*}
 k \cdot m = \mathrm{lcm}(m,m+n) = \frac{m(m+n)}{\mathrm{gcd}(m,m+n)}
\end{align*}
from which we get
\begin{align*}
 k = \frac{m+n}{\mathrm{gcd}(m,m+n)} = \frac{m+n}{\mathrm{gcd}(m,n)} \, .
\end{align*}

As this discussion is valid for any simple closed regular curve in the regular part of the orbifold we have just shown the following.

\begin{prop}\label{prop:contractible}
 Let $\Oo=S^2(m,n)$ be a spindle orbifold and let $c$ be a simple closed regular curve in the regular part of $\Oo$. Then the lift of $c$ to the unit tangent bundle $T^1\Oo$ is non-contractible. Furthermore, the following holds
 \begin{enumerate}[i)]
 \item If $c$ encloses one of the singularities (and therefore both), it becomes contractible after being iterated by any non-zero integer multiple of $(m+n)$-times.
 \item If $c$ does not enclose the singularities, it becomes contractible after being iterated by any non-zero integer multiple of $k$-times, for
 \begin{align*}
  k = \frac{m+n}{\mathrm{gcd}(m,n)} \, .
 \end{align*}
 \end{enumerate}
\end{prop}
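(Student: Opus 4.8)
The plan is to reduce the assertion, which is stated for an arbitrary simple closed regular curve $c$, to the two model curves $c_1$ and $c_2$ whose lifts were already identified in the discussion preceding the proposition, and then to read off the order of the resulting class in $\pi_1(T^1\Oo)\cong\Z_{m+n}$.

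First I would classify embedded circles in the regular part $U=\Oo\setminus\Sigma(\Oo)$ up to isotopy. Since $U$ is a sphere with the two singular points removed, it is an open annulus, in which every embedded circle is either inessential (bounding a disk inside $U$) or essential (isotopic in $U$ to the core circle). Viewing $c$ inside $S^2$, the Jordan--Schoenflies theorem shows that $c$ separates $S^2$ into two disks partitioning the two singular points: if one singular point lies on each side, then neither disk is contained in $U$ and $c$ is essential -- this is case (i) -- whereas if both singular points lie on the same side, the opposite disk lies entirely in $U$ and $c$ is inessential -- this is case (ii). In case (i), $c$ is isotopic in $U$ to the equator, hence to $c_1$; in case (ii), $c$ is isotopic in $U$ to $c_2$. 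Each isotopy may be taken through regular curves, so by the isotopy invariance of the homotopy class of the lift recorded above, the lift of $c$ is homotopic, up to orientation, to the lift of the corresponding model curve.

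It then remains to invoke the computations already carried out: writing $\gamma_0$ for the class of the central fiber of $T_1=T^1D_1$, which generates $\pi_1(T^1\Oo)\cong\Z_{m+n}$, the lift of $c_1$ is homotopic to $\gamma_0$ and that of $c_2$ to $m\gamma_0$. In the cyclic group $\Z_{m+n}$ the element $\gamma_0$ has order $m+n$, while $m\gamma_0$ has order $(m+n)/\gcd(m,m+n)=(m+n)/\gcd(m,n)=k$. Reversing the orientation of $c$ replaces the lift class by the image of its inverse under the fiberwise antipodal involution of $T^1\Oo$; since inversion and an automorphism both preserve orders, the orientation ambiguity from the isotopy step does not affect the order. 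As $\gcd(m,n)<m+n$ forces $k\ge 2$ and $m+n\ge 2$, both classes are nontrivial, which yields the non-contractibility of the lift, and the $j$-fold iterate of the lift is trivial exactly when its order divides $j$ -- giving the stated non-zero multiples of $m+n$ in case (i) and of $k$ in case (ii).

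The main obstacle I anticipate is the isotopy-reduction step rather than the elementary order computation. Two points require care: that the ambient isotopy carrying $c$ to its model curve can be realized through regular (nowhere-vanishing-velocity) curves, so that the canonical lift varies continuously and its homotopy class is genuinely preserved; and the correct matching of the geometric enclosing condition with the essential/inessential dichotomy in the annulus $U$, keeping in mind that on $S^2$ a simple closed curve always bounds two disks. Once this reduction is secured, the conclusion follows purely from the previously established classes of the lifts of $c_1$ and $c_2$ and an order computation in $\Z_{m+n}$.
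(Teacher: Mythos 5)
Your proposal is correct and follows essentially the same route as the paper: the paper's own proof of this proposition is exactly the discussion you invoke (isotopy invariance of the homotopy class of the lift, reduction to the model curves $c_1$ and $c_2$, identification of their lifts with the central fiber of $T^1D_1$ and with $m$ times that fiber, and the order computation giving $m+n$ and $k=(m+n)/\gcd(m,n)$ in $\pi_1(T^1\Oo)\cong\Z_{m+n}$). Your additions are only points of detail that the paper leaves implicit, namely the Jordan--Schoenflies/annulus classification justifying the reduction of an arbitrary simple closed regular curve to $c_1$ or $c_2$, and the handling of the orientation ambiguity via the fiberwise antipodal involution.
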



With regard to rotationally symmetric spindle orbifolds (compare Section~\ref{sec:surfacerevolution}) we conclude the following.

\begin{cor} \label{cor:contractible_windingnr}
 Let $\Oo=S^2(m,n)$ be a rotationally symmetric spindle orbifold. Then a closed geodesic $\gamma$ in the regular part of $\Oo$ has a contractible lift in $T^1\Oo$ if and only if it winds
 \begin{align*}
  |W(\gamma)| = k (m+n) \, ,
 \end{align*}
 many times around the singular points, where $k$ is a positive integer. In particular, any equator is non-contractible in $T^1\Oo$, and it becomes contractible after being iterated $k(m+n)$ times, where $k$ is a non-zero integer. Moreover, if $\Oo$ is in addition Besse for every other closed geodesic we have:
 \begin{enumerate}[i)]\label{prop:iterates_contractible} \setcounter{enumi}{1}
 \item If $m+n$ is odd, the closed geodesic and all its iterates are contractible in $T^1\Oo$.
 \item If $m+n$ is even, the closed geodesic is non-contractible in $T^1\Oo$. It becomes contractible when iterated $2k$ times, where $k$ is a non-zero integer.
 \end{enumerate}   

\end{cor}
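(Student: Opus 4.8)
\emph{Overview.} The plan is to reduce the homotopy class in $\pi_1(T^1\Oo)\cong\Z_{m+n}$ of the velocity lift of any closed geodesic in the regular part to the single integer $W(\gamma)$, and then to read off all three groups of assertions by plugging in the relevant winding numbers. First I would fix the coordinates $(\theta,\beta,s)$ of \eqref{eq:coord_unittangent}, in which $T^1U\cong \R/2\pi\Z\times\R/2\pi\Z\times(0,M)$, so that $\pi_1(T^1U)\cong\Z^2$ is generated by the class $a$ of the $\theta$-loop (at fixed $\beta=0$, $s=s_0$) and the class $f$ of the fibre $\beta$-loop. The $\theta$-loop at $\beta=0$ is exactly the velocity lift of an equator, so by Lemma~\ref{lemma:lensspace} and the generator computation in Section~\ref{chapter:topology_t1o} its image under the inclusion-induced map $\iota_*\colon\pi_1(T^1U)\to\pi_1(T^1\Oo)\cong\Z_{m+n}$ is a generator $g$ of order $m+n$.

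\emph{The crux.} The heart of the argument is to show that the velocity lift of a closed geodesic $\gamma$ in the regular part represents $W(\gamma)\cdot g$. Writing the lift as $t\mapsto(\theta(t),\beta(t),s(t))$, its class in $\pi_1(T^1U)$ is $W(\gamma)\,a+B(\gamma)\,f$, where $W(\gamma)$ is the net winding of $\theta$ and $B(\gamma)$ that of $\beta$. Now no closed geodesic in the regular part is meridional, so by the Clairaut lemma the integral $K=r\cos\beta$ is a nonzero constant along $\gamma$; hence $\cos\beta$ keeps a fixed sign, $\beta$ is confined to a fixed open half-circle (for an equator simply $\beta\equiv0$), and it oscillates back to its initial value, giving $B(\gamma)=0$. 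Applying $\iota_*$ yields $[\,\text{lift }\gamma\,]=W(\gamma)\,g$, which is trivial in $\Z_{m+n}$ precisely when $(m+n)\mid W(\gamma)$, i.e.\ when $|W(\gamma)|=k(m+n)$ for a positive integer $k$ (note $W(\gamma)\neq0$ by Lemma~\ref{lemma:thetaderivative}). This establishes the first assertion.

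\emph{Specialisation.} An equator has $W=1$, so its lift equals $g$ and is non-contractible, while its $N$-fold iterate has $W=N$ and becomes contractible exactly when $N=k(m+n)$; this is the second assertion. For the Besse claims I would invoke Proposition~\ref{thm:tannery_orbifold}: every non-equatorial geodesic makes $W=\frac{m+n}{2-\alpha}$ revolutions. If $m+n$ is odd then $\alpha=1$ and $W=m+n$, so the geodesic and all its iterates are contractible; if $m+n$ is even then $\alpha=0$ and $W=\frac{m+n}{2}$, which is not a multiple of $m+n$, so the geodesic is non-contractible and its $N$-fold iterate, of winding $N\frac{m+n}{2}$, is contractible iff $N$ is even, i.e.\ iff $N=2k$.

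\emph{Main obstacle.} The delicate step is the crux identification $[\,\text{lift }\gamma\,]=W(\gamma)\,g$, which combines the topological input $\iota_*(a)=g$ with the dynamical fact that the fibre contribution vanishes, $B(\gamma)=0$, a direct consequence of the Clairaut first integral. Since a geodesic with $W\geq2$ is in general not simple, Proposition~\ref{prop:contractible} cannot be applied to $\gamma$ directly; it is precisely the homotopy reduction of $\gamma$ to $W(\gamma)$ copies of the \emph{simple} equator class $a$ that makes the statement follow from the order of $g$.
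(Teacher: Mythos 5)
Your core argument for the winding-number criterion is correct, and it takes a genuinely different route from the paper's. The paper argues on the base: since $|\dot\theta|>0$ (Lemma \ref{lemma:thetaderivative}), a closed geodesic in the regular part has no contractible subloop, hence is \emph{regularly} homotopic to the $w$-th iterate of a simple closed curve of winding number one; a regular homotopy in $\Oo$ induces a homotopy of the lifts in $T^1\Oo$, and Proposition \ref{prop:contractible} then finishes the argument. You instead compute directly in $\pi_1(T^1U)\cong\Z^2$: the velocity lift has class $W(\gamma)a+B(\gamma)f$, and the Clairaut integral $K=r\cos\beta$, being a nonzero constant along a non-meridional geodesic, confines $\beta$ to an open half-circle, forcing $B(\gamma)=0$; pushing forward by $\iota_*$, with $\iota_*(a)=g$ a generator (the same input from Section \ref{chapter:topology_t1o} that underlies Proposition \ref{prop:contractible}), gives that the lift represents $W(\gamma)\,g$. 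This cleanly bypasses the regular-homotopy reduction, which your last paragraph rightly identifies as the delicate step in the simple-curve approach; both routes rest on the same computation that the equator's lift generates $\pi_1(T^1\Oo)\cong\Z_{m+n}$.

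However, there is a genuine gap in your treatment of the Besse assertions (i) and (ii): they concern \emph{every} closed geodesic other than the equator, and on a Besse spindle orbifold this includes the meridians, i.e.\ the closed geodesics passing through the singular points. For a meridian none of your tools apply: $W(\gamma)$ is undefined (the curve meets the axis of rotation), the Clairaut constant is zero rather than nonzero, and the winding statement of Proposition \ref{thm:tannery_orbifold} that you invoke is explicitly restricted to geodesics in the regular part --- as indeed is your crux identity, which lives in $\pi_1(T^1U)$. The paper closes exactly this case by a separate limiting argument: meridians can be approximated by nearby non-meridian closed geodesics (available because $\Oo$ is Besse), and the homotopy class of the lift is stable under such approximation, so the meridian inherits the class computed for the approximating geodesics. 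Without an argument of this kind, your proof establishes (i) and (ii) only for closed geodesics contained in the regular part.
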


\begin{proof}
 We note that a geodesic is necessarily a regular curve and set $w:=|W(\gamma)|$. From Lemma~\ref{lemma:thetaderivative}, we know that $|\dot\theta|>0$. Consequently, a closed geodesic $\gamma$ cannot have any contractible subloop. It is therefore regularly homotopic to $\alpha^w$ for a simple closed curve $\alpha$ with winding number $1$ (and where by regularly homotopic we mean homotopic such that every curve in the homotopy is regular). Any regular homotopy on $\Oo$ corresponds to a homotopy in $T^1\Oo$. Thus, the lift of $\gamma$ to $T^1\Oo$ is homotopic to the lift of $\alpha^w$. The statement then follows from Proposition~\ref{prop:contractible}.

 The statements about equators and closed geodesics other than meridians follow from Corollary~\ref{cor:contractible_windingnr} and Corollary~\ref{thm:tannery_orbifold}. The statement about meridians follows from the fact, that they can be approximated by closed geodesics (that are not meridians).
\end{proof}

\begin{remark}\label{rmk:windingnumber_homotopyclass}
The above generalizes to the following statement. Let $\Oo=S^2(m,n)$ be a rotationally symmetric spindle orbifold and $\gamma$ be a closed geodesic in the regular part of $\Oo$. If $W(\gamma)=k$, then the lift of $\gamma$ to $T^1\Oo$ represents an element in the fundamental group which lies in a subgroup corresponding to the subgroup generated by $k$ in $\Z_{n+m}$.
\end{remark}

At this point we also record the following consequence.

\begin{cor}\label{cor:contractible_length_spectrum}
Let $\Oo=S^2(m,n)$ be a rotationally symmetric Besse spindle orbifold. Then for any prime geodesic the minimal length of an iterate that has a contractible lift to the unit tangent bundle $T^1\Oo$ is $2(m+n)\pi$.
\end{cor}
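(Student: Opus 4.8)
The plan is to enumerate the prime geodesics of a rotationally symmetric Besse spindle orbifold and, for each, to read off its length from Proposition~\ref{thm:tannery_orbifold} and the smallest iterate with a contractible lift from Corollary~\ref{cor:contractible_windingnr}; the sought minimal length is then the product of these two quantities, which I would compute directly. By Lemma~\ref{lemma:uniqueequator} and the normalization fixed at the beginning of this section (equator of length $2\pi$), every prime geodesic falls into exactly one of three classes: the equator, a non-equator geodesic contained in the regular part, or a meridian.

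For the equator, Corollary~\ref{cor:contractible_windingnr} states that its lift is non-contractible and first becomes contractible after iterating a multiple of $(m+n)$ times; the smallest such iterate has length $(m+n)\cdot 2\pi = 2(m+n)\pi$. For a non-equator geodesic in the regular part I would split on the parity of $m+n$. If $m+n$ is odd, Proposition~\ref{thm:tannery_orbifold} gives the length $2\frac{m+n}{2-\alpha}\pi = 2(m+n)\pi$ (as $\alpha=1$), and Corollary~\ref{cor:contractible_windingnr} tells us the geodesic is already contractible, so its minimal contractible iterate is the geodesic itself, again of length $2(m+n)\pi$. If $m+n$ is even, the length is $2\frac{m+n}{2}\pi = (m+n)\pi$ (as $\alpha=0$), the geodesic is non-contractible, and it becomes contractible after exactly doubling, giving minimal contractible length $2(m+n)\pi$. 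Thus every case produces the same value.

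The meridians I would dispose of last, and they constitute the only genuinely delicate point: because meridians pass through the singular points, the winding-number criterion of Proposition~\ref{prop:contractible} does not apply to them verbatim. This is, however, precisely the situation already handled in the proof of Corollary~\ref{cor:contractible_windingnr}, where meridians are treated by approximating them with nearby non-meridian closed geodesics; consequently their contractibility behaviour (contractible when $m+n$ is odd, contractible after doubling when $m+n$ is even) matches that of the regular-part geodesics. Since the Besse length-spectrum rigidity of Proposition~\ref{thm:tannery_orbifold} assigns meridians the same common length $2\frac{m+n}{2-\alpha}\pi$ as the other non-equator prime geodesics, the meridian case yields $2(m+n)\pi$ as well. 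As all three classes give the identical minimum, the corollary follows.
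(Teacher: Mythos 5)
Your proof is correct and follows essentially the same route as the paper: read off the lengths from Proposition~\ref{thm:tannery_orbifold}, the required iteration counts from Corollary~\ref{cor:contractible_windingnr}, and multiply. Your explicit parity split and separate meridian discussion are just an unfolding of the paper's unified ``iterate $(2-\alpha)$ times'' formulation; in particular, the meridian case needs no extra care here, since Corollary~\ref{cor:contractible_windingnr} (via the approximation argument in its proof) already asserts the contractibility behaviour for \emph{all} non-equator closed geodesics on a Besse orbifold.
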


\begin{proof}
 By Corollary~\ref{thm:tannery_orbifold}, we know that the equator has length $2\pi$ and every other geodesic has length $2\frac{m+n}{2-\alpha}\pi$, where again
 \begin{align*}
  \alpha = \begin{cases} 0 \, , & m+n \text{ even} \\ 1 \, , & m+n \text{ odd} \end{cases} \, .
 \end{align*}  
 Furthermore, by Corollary~\ref{cor:contractible_windingnr}, the equator has to be iterated at least $(m+n)$-times to be contractible in $T^1\Oo$; any other geodesic has to be iterated at least $(2-\alpha)$-times and the claim follows.
\end{proof}

\subsection{Inequalities for the contractible systolic ratio}

Now we readily obtain one implication of Theorem \ref{thm:A} in the following proposition.

\begin{prop}\label{prop:systolic_ratio_besse}
 The contractible systolic ratio of a rotationally symmetric $S^2(m,n)$ Besse spindle is
 \begin{align*}
  \rho_\text{contr}(S^2_\text{Besse}(m,n)) = 2(n+m)\pi \, .
 \end{align*}
\end{prop}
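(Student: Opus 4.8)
The plan is to evaluate the two quantities appearing in the definition of $\rho_\text{contr}$ separately and then form their ratio. For the area I would simply invoke the computation already carried out in \eqref{eq:besseorbifold_area}: using the normal form of the metric from Proposition~\ref{thm:tannery_orbifold} and the fact that the profile function $h$ is odd, one obtains $\mathrm{area}(\Oo) = 2\pi(m+n)$, so this factor requires no further work.

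The substantive input is the numerator $\ell_\text{min,contr}^2$. Here I would appeal directly to Corollary~\ref{cor:contractible_length_spectrum}, which asserts that for every prime closed geodesic the shortest iterate admitting a contractible lift to $T^1\Oo$ has length exactly $2(m+n)\pi$. Since every closed geodesic is an iterate of a prime one, and the value $2(m+n)\pi$ is attained uniformly and independently of the chosen prime geodesic, the shortest contractible closed geodesic has length precisely $\ell_\text{min,contr} = 2(m+n)\pi$. As a consistency check one can read this off the length spectrum from Proposition~\ref{thm:tannery_orbifold} together with the winding-number criterion of Corollary~\ref{cor:contractible_windingnr}: the equator has length $2\pi$ and must be iterated $m+n$ times, while any other prime geodesic has length $2\frac{m+n}{2-\alpha}\pi$ and must be iterated $2-\alpha$ times, both yielding $2(m+n)\pi$.

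Combining the two ingredients gives
\[
\rho_\text{contr}(S^2_\text{Besse}(m,n)) = \frac{\ell_\text{min,contr}^2}{\mathrm{area}(\Oo)} = \frac{\bigl(2(m+n)\pi\bigr)^2}{2\pi(m+n)} = 2(m+n)\pi,
\]
as claimed. There is no genuine obstacle here, since all the real work is packaged in the earlier results; the only point deserving care is to confirm that no contractible closed geodesic is shorter than $2(m+n)\pi$, which is exactly the content of Corollary~\ref{cor:contractible_length_spectrum} holding uniformly over all prime geodesics. Equivalently, one should note that the equator, although it is the globally shortest geodesic, does not produce a short contractible iterate before the $(m+n)$-th, so that the minimum over all prime geodesics and their iterates is genuinely attained at $2(m+n)\pi$.
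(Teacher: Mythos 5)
Your proposal is correct and follows essentially the same route as the paper: the area computation from \eqref{eq:besseorbifold_area} combined with Corollary~\ref{cor:contractible_length_spectrum} to identify $\ell_\text{min,contr} = 2(m+n)\pi$, then forming the ratio. Your additional consistency check via Proposition~\ref{thm:tannery_orbifold} and Corollary~\ref{cor:contractible_windingnr} merely unpacks what the paper's cited corollary already establishes.
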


\begin{proof}
 In our discussion at the beginning of this section, we already calculated the area of a rotationally symmetric Besse spindle orbifold $S^2(m,n)$ to be $\mathrm{area}(\Oo) = 2(m+n)\pi$. Moreover, from Corollary~\ref{cor:contractible_length_spectrum} we know that the length of any closed geodesic that lifts to a contractible curve in $T^1\Oo$ is $2(m+n)\pi$ and this proves the claim.
\end{proof}

Before proving the converse in the following section let us discuss some related remarks and variants of Theorem \ref{thm:A}.

The geodesic flow on $T^1\Oo$ can be seen as a Reeb flow. According to Lemma~\ref{lemma:lensspace} $T^1\Oo$ is diffeomorphic to $L(m+n,1)$ and so it is covered by $S^3$. In particular, the Reeb flow on $T^1\Oo$ lifts to a Reeb flow on $S^3$. By \cite{lange20} this lifted Reeb flow is Zoll. In the present rotationally symmetric setting the same conclusion follows more easily from Corallary~\ref{cor:contractible_length_spectrum}
 
 We recall a useful relation between the surface area and the contact volume of a surface which extends to the case of $2$-orbifolds with isolated singularities:
\begin{align}\label{eq:areacontactvolume}
 \mathrm{area}(\Oo) = \frac{1}{2\pi} \vol(T^1\Oo) \, .
\end{align}
 For manifolds the equality is proven for instance in \cite[Proposition~3.7]{abbondandolo17} by explicit calculation in isothermal coordinates. It remains valid under removing sets of measure zero which gives the same equality in the case of orbifolds with isolated singularities. We will use it in the proofs of our main results and we can also use it to relate the contractible systolic ratio to the standard systolic ratio for contact forms on $S^3$: The standard systolic ratio for $S^3$ endowed with a contact form $\alpha$ is defined as
 \begin{align*}
  \rho_\text{sys}(S^3,\alpha) = \frac{T_\text{min}(\alpha)^2}{\vol(S^3,\alpha\wedge d\alpha)} \, ,
 \end{align*}
 where $T_\text{min}(\alpha)$ denotes the minimal period of all periodic orbits. Consider now the contact structure $\alpha_\phi$ on $S^3$ induced by the lift of the geodesic flow $\phi$ as described before. Since the contractible orbits on $T^1\Oo$ lift to closed orbits on $S^3$, we see that $T_\text{min}(\alpha_\phi) = \ell_\text{min, contr}$. Taking into account \eqref{eq:areacontactvolume} and the degree of the covering $T^1\Oo = L(m+n,1) \to S^3$, we get
 \begin{align}\label{eq:systolicratio_contr_contact}
  \rho_{\text{contr}}(S^2(m,n)) = 2\pi(m+n) \rho_\text{sys}(S^3,\alpha_\phi)
 \end{align}
 as a relation between both systolic ratios. Since standard systolic ratio for any Zoll contact form on $S^3$ is $1$ \cite[Theorem~1]{abbondandolo18_2}, we recover the relation
  \begin{align*}
   \rho_{\text{contr}}(S^2(m,n)) = 2\pi(m+n) \, .
  \end{align*}

The relation between Besse metrics on the orbifold and Zoll contact forms on $S^3$ also yields the following.

\begin{remark}
 Zoll contact forms on $S^3$ constitute strict local maximizers of the systolic ratio on $S^3$ in a $C^3$-topology \cite[Theorem~1]{abbondandolo18_2}. Moreover, relation \eqref{eq:systolicratio_contr_contact} tells us that the contractible systolic ratio on rotationally symmetric spindle orbifolds and the standard sytolic ratio of the contact form obtained by lifting the geodesis flow on $T^1\Oo$ to $S^3$ coincide up to a constant factor. Thus, rotationally symmetric Besse metrics on spindle orbifolds  are strict local maximizers of the systolic ratio in $C^3$-topology among all $S^2(m,n)$ spindle orbifolds (not just the rotationally symmetric ones).
\end{remark}

A generalization of the contractible systolic ratio was suggested to us by P.A.S. Salom\~{a}o:

\begin{defn}
 On a spindle orbifold $\mathcal{O}=S^2(m,n)$ and a divisor $k$ of $(m+n)$ we define the quantity
 \begin{align*}
  \rho_\text{sys,k}(\Oo) = \frac{\ell_\text{min,k}^2}{\mathrm{area}(\Oo)} \, ,
 \end{align*}
 where $\ell_\text{min,k}$ denotes the length of the shortest closed geodesic whose lift to the unit tangent bundle $T^1\Oo$ lies in the subgroup of $\pi_1(T^1\Oo)$ of order $k$.
\end{defn}
Note how for $k=1$ this definition coincides with our notion of the contractible systolic ratio.

\begin{remark}\label{rmk:no_local_maximizer_of_rho_sysk}
 Unless $k=m+n$ (as in Theorem A) or $k=2$ for $m+n$ even (as in Theorem B) the Besse metrics even fail to locally maximize the systolic ratio $\rho_{sys,k}$ among rotationally symmetric metrics. This can be seen by an argument similar to the one at the beginning of this section which was used to show that Besse metrics do not locally maximize the classical systolic ratio among rotationally symmetric metrics. In fact, for $k$ not like in the specific cases stated above, we know that on a rotationally symmetric Besse orbifold the $k$-th iterate of the equator is the only closed geodesic whose lift represents an element in the subgroup of $\pi_1(T^1\Oo)$ of order $(n+m)/k$; it has length $2\pi k$ and any other geodesic has length $2\frac{m+n}{2-\alpha}\pi$ (recall Proposition \ref{thm:tannery_orbifold} and Remark \ref{rmk:windingnumber_homotopyclass}). We now deform the orbifold in a neighbourhood $V$ of a parallel away from the equator like in the above mentioned argument. In doing so, we do not create new closed geodesics contributing to $\rho_{sys,k}$ as, before the deformation, we have $\phi_{2\pi k}(u) \neq u$ for all $u\in T^1 V$ (where again $\phi$ denotes the geodesic flow) which remains true also after a sufficiently small deformation.
\end{remark}

\begin{remark}
  Again, we can consider the geodesic flow as a Reeb flow on the unit tangent bundle. Then for a suitable covering of $L(m+n,1)$, the systolic ratio $\rho_{\text{sys,k}}$ coincides with the standard systolic ratio of the lifted Reeb flow up to a multiplicative constant, similarly to \eqref{eq:systolicratio_contr_contact}. For Besse spindle orbifolds the only such lifts that are Zoll are the lifts to the universal covering $S^3$ and, if $n+m$ is even, to the $\frac{m+n}{2}$-fold covering $L(2,1)$.
\end{remark}

For the systolic ratio $\rho_{\text{sys},k}$ we will prove Theorem B in the same way as Theorem A.

\begin{remark}
 For $m=n=1$ a $S^2(m,n)$-spindle orbifold is just a smooth sphere. Moreover, we know that on rotationally symmetric spheres every closed geodesic is a simple closed curve. As such its lift will be non-contractible by Corollary~\ref{cor:contractible_windingnr}. Consequently, for a Zoll sphere (remember that on $S^2$ Besse implies Zoll) $\rho_{\text{contr},2}$ coincides with the standard systolic ratio considered in \cite{abbondandolo18}.
\end{remark}

\section{The generating function}\label{sec:generatingfunction}

Let $\Oo$ be a rotationally symmetric spindle orbifold as discussed in Section~\ref{sec:surfacerevolution} and let $A$ be an open Birkhoff annulus associated to an equator of minimal length (see Section~\ref{ssec_returnmapreturntime}). Recall that the minimality of the equator ensures that every geodesic emanating from $A$ is an oscillating geodesic in terms of the dichotomy given by Lemma~\ref{lemma:geodesics_surfacerev}. The first return time and first return map (see Section~\ref{ssec_returnmapreturntime}) depend on the length of the geodesic arc that comprises one oscillation. This information is encoded in a single map $F$, in terms of which $\tau$ and $\varphi$ take a particularly nice form with respect to the coordinates \eqref{eq:coord_xi_eta}, which is given by the following lemma.

\begin{lemma}\label{lemma:gen_fct_return_mapandtime}
 Let $A \cong \R/L\Z \times (-1,1)$ be the Birkhoff annulus associated to a positively oriented equator of minimal length $L$ among all equators. Then the first return map $\varphi: A \to A$ and the first return time $\tau: A \to (0,+\infty)$ to $A$ take the form
 \begin{align*}
  \tau(\xi,\eta) &= F(\eta) - \eta F'(\eta) \, , \\
  \varphi(\xi,\eta) &= (\xi + F'(\eta) + \alpha {\textstyle \frac L2}, \eta) \, ,
 \end{align*}
 where $F:(-1,1)\to\R$ is an even smooth function and where we have used the notation
 \begin{align*}
   \alpha = \begin{cases} 0 \, , \quad & m+n \text{ even} \\ 1 \, , \quad & m+n \text{ odd} \end{cases} \, .
 \end{align*}
\end{lemma}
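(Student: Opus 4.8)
The plan is to combine the rotational symmetry with the relation $\varphi^{*}\lambda-\lambda=d\tau$ recorded above and to produce $F$ by a single integration; the only delicate point is the behaviour of the return map at the meridian $\eta=0$, where the geodesics run into the singular points.

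\emph{Symmetry reduction.} The geodesic flow commutes with the isometric $S^{1}$-action, which acts on $A$ by $\xi\mapsto\xi+\mathrm{const}$, and the Clairaut function $K=r(s)\cos\beta$ is a first integral, equal to $-r(s_{0})\eta$ on $A$. Hence $\varphi$ preserves the $\eta$-coordinate, and neither $\tau$ nor the $\xi$-displacement of $\varphi$ depends on $\xi$; thus $\tau=\tau(\eta)$ and $\varphi(\xi,\eta)=(\xi+s(\eta),\eta)$, where a priori the displacement $s(\eta)\in\R/L\Z$ is only circle-valued. By Lemma~\ref{lemma:geodesics_surfacerev} every geodesic issuing from $A$ oscillates between parallels $P_{s_{1}},P_{s_{2}}$ with $r(s_{1})=r(s_{2})=|K|$ and $s_{1}<s_{0}<s_{2}$; since its $s$-coordinate is monotone between consecutive turning points, it meets $P_{s_{0}}$ exactly twice per oscillation, once ascending and once descending, so a first return to $A$ is precisely one full oscillation. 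Parametrising by $s$ through $\dot s=\sin\beta$ and $\dot\theta=K/r^{2}$ yields the closed forms $\tau(\eta)=2\int_{s_{1}}^{s_{2}}\frac{r\,ds}{\sqrt{r^{2}-K^{2}}}$ and a real-valued total winding $\widetilde s(\eta)=2r(s_{0})\int_{s_{1}}^{s_{2}}\frac{K\,ds}{r\sqrt{r^{2}-K^{2}}}$ whose reduction modulo $L$ is $s(\eta)$. From these expressions $\tau$ is even in $\eta$ (it depends on $K^{2}=r(s_{0})^{2}\eta^{2}$) and smooth on $(-1,1)$, the square-root singularities at the turning points being integrable and depending smoothly on the parameter, as in \cite{abbondandolo18}; likewise $\widetilde s$ is odd in $\eta$ and smooth for $\eta\neq0$.

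\emph{The crux: behaviour at $\eta=0$.} As $\eta\to0$ the turning points $s_{1},s_{2}$ run to the two poles and $\widetilde s$ develops a jump, since the integrand concentrates near the cone points. The contribution of a neighbourhood of a singular point of order $m$ is read off from the local cone model $g\approx ds^{2}+(s/m)^{2}d\theta^{2}$, in which the near-meridian geodesic is a straight line missing the apex and hence sweeps a total angle $\Delta\theta\to m\pi$; the other pole contributes $n\pi$, so the two one-sided limits of $\widetilde s$ at $0$ differ by $(m+n)L$ (recall $L=2\pi r(s_{0})$) and each is congruent to $\tfrac12(m+n)L\equiv\alpha\tfrac L2\pmod L$. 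The jump of $\widetilde s$ is therefore an integer multiple of $L$, so the circle-valued displacement $s=\widetilde s\bmod L$ extends smoothly across the meridian with $s(0)\equiv\alpha\tfrac L2$. Subtracting the appropriate locally constant jump $\pm\tfrac12(m+n)L\,\mathrm{sgn}(\eta)$ produces a genuine real-valued function $G$ that is odd and smooth on all of $(-1,1)$, with $G(0)=0$ and $s(\eta)\equiv G(\eta)+\alpha\tfrac L2\pmod L$. Establishing this smooth extension and pinning down the constant $\alpha\tfrac L2$ is the main obstacle, as it is exactly the step where the orbifold singular structure enters; for $m=n=1$ it reduces to the computation in \cite{abbondandolo18}.

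\emph{Construction of $F$.} Applying the relation $\varphi^{*}\lambda-\lambda=d\tau$ to $\lambda=-\eta\,d\xi$ and using $\varphi(\xi,\eta)=(\xi+G(\eta)+\alpha\tfrac L2,\eta)$ gives $-\eta\,G'(\eta)\,d\eta=d\tau=\tau'(\eta)\,d\eta$, that is $\tau'=-\eta\,G'$. Define $F(\eta):=\tau(\eta)+\eta\,G(\eta)$. Then $F'(\eta)=\tau'(\eta)+G(\eta)+\eta\,G'(\eta)=G(\eta)$ and $F(\eta)-\eta\,F'(\eta)=\tau(\eta)$, which are exactly the two asserted identities once $F'=G$ is substituted into the formula for $\varphi$. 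Finally $F$ is smooth because $\tau$ and $G$ are, and $F$ is even because $\tau$ is even and $\eta\mapsto\eta\,G(\eta)$ is a product of two odd functions.
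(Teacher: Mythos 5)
Your argument is correct and rests on the same two pillars as the paper's proof (rotational symmetry plus the relation $\varphi^*\lambda-\lambda=d\tau$), but it executes the key steps in a genuinely different way, so a comparison is worthwhile. The paper argues abstractly: it writes $\varphi(\xi,\eta)=(\xi+f(\eta)+\alpha\frac L2,\eta)$ for a smooth circle-valued displacement $f$, pins down the constant by the \emph{exact} recurrence behaviour of orbifold meridians (pass-through at odd-order cone points, reflection at even-order ones, giving $\varphi(\xi,0)=(\xi+\alpha\frac L2,0)$ and hence the normalization $f(0)=0$), derives oddness of $f$ from the reflection symmetry through planes containing the axis, and finally takes $F$ to be a primitive of $f$, fixing the integration constant so that $d\tau=d(F-\eta F')$ integrates to the stated identity. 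You instead work with explicit Clairaut integrals, which yield evenness of $\tau$ and oddness of $\widetilde s$ for free; you identify the constant $\alpha\frac L2$ by a limiting cone-model computation ($\Delta\theta\to k\pi$ per passage near a cone point of order $k$), which is precisely the computation the paper postpones to the proof of Lemma~\ref{lemma:windingnumber_generatingfct}; and your direct definition $F:=\tau+\eta G$ is actually slicker than the paper's, since $F'=G$ and $F-\eta F'=\tau$ then hold automatically, with no integration constant left to normalize. One inference is too quick as written: the matching of the one-sided limits of $\widetilde s$ modulo $L$ gives only \emph{continuity} of the circle-valued displacement across $\eta=0$, not smoothness. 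The clean justification is that $\varphi$ and $\tau$ are smooth on all of $A$ because the geodesic flow is a smooth Reeb flow on the smooth lens space $T^1\Oo$ and $A$ is a transversal, so the first-return map is smooth by the implicit function theorem even at vectors whose geodesics pass near or through the cone points; the paper relies on exactly this fact when it asserts that $f$ is smooth, so this is not a gap relative to the paper's standard of rigour, but you should invoke that fact rather than the limit matching.
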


We will refer to this function $F$ as the \textit{generating function} of the first return map $\varphi$.

\begin{proof}
 We adapt the proof of \cite[Lemma~3.1]{abbondandolo18} to the orbifold case. First, we note that the restriction of the Clairaut function to $A$ is given by
 \begin{align*}
  K(u)|_A = r(s_0)\cos\beta = -r(s_0)\eta \, .
 \end{align*}
 Consequently, the second component of $(\xi,\eta)$ is preserved by the first return map $\varphi$. Furthermore, the rotational symmetry of $S$ implies that $\varphi$ commutes with translations in the $\xi$-coordinate. Thus, $\varphi$ takes the form
 \begin{align*}
  \varphi(\xi,\eta) = (\xi + f(\eta) + \alpha \frac{L}{2}, \eta) \quad \forall (\xi,\eta) \in \R/L\Z \times (-1,1)
 \end{align*}
 for a smooth function $f:(-1,1)\to\R$, where we do not absorb the summand $\alpha\frac{L}{2}$ into $f$ for reasons that will become clear shortly hereafter. The function $f$ is only defined modulo $L$. We can normalize $f$ by considering the behaviour of meridians. From the discussion in last paragraph of Section \ref{sub:rotationally_symmetric_metrics}, we see that for $(m+n)$ even the meridian starting at any point $x$ of $A$ has its point of first recurrence at $x$, whereas for $(m+n)$ odd, the point of first recurrence is the antipode of $x$. In terms of the first return map $\varphi$, this implies
 \begin{align*}
  \varphi((\xi,0)) = \begin{cases} (\xi,0) \, , \quad & m+n \text{ even} \\ (\xi + \frac L2,0) \, , \quad & m+n \text{ odd} \end{cases} \quad \forall \xi\in \R/L\Z \, .
 \end{align*}
 Consequently, we can normalize $f$ by requiring $f(0)=0$. The symmetry of $S$ with respect to reflection through planes containing the $z$-axis implies that 
 \begin{align*}
  \varphi(\xi,-\eta)=(\xi + f(-\eta) + \alpha\frac L2, -\eta) = (\xi - f(\eta) - \alpha\frac L2, -\eta)
 \end{align*}
 for all $(\xi,\eta) \in \R/L\Z \times (-1,1)$. This implies that
 \begin{align*}
  f(-\eta) = -f(\eta) + kL
 \end{align*}
 for a $k\in\Z$. The normalization $f(0)=0$ now forces $k=0$ and thus $f$ is an odd function (independently of $\alpha$, which is why we chose to break of the $\alpha\frac L2$ summand earlier).\\[0.5ex]
 Now, let $F:(-1,1)\to\R$ be a primitive of $f$. $F$ has to be even as $f$ is odd. We have thus shown that the first return map $\varphi$ can be expressed as given in the statement. Nevertheless, $F$ is not yet determined uniquely but only up to the constant of integration. We notice that
 \begin{align*}
 d\tau = \varphi^*\lambda - \lambda &= - \eta d\left( \xi + F'(\eta) + \alpha \frac L2 \right) + \eta d\xi = -\eta F''(\eta) d\eta = d \left( F(\eta) - \eta F'(\eta) \right) \, ,
 \end{align*}
 independently of $\alpha$. This means, that in both cases, $\tau$ and $F(\eta) - \eta F'(\eta)$ differ by a constant and we can normalize $F$ such that
 \begin{align*}
  \tau(\xi,\eta) = F(\eta) - \eta F'(\eta) \quad \forall (\xi,\eta) \in \R/L\Z \times (-1,1)
 \end{align*}
 which yields the desired identity.
\end{proof}

 By Lemma~\ref{lemma:gen_fct_return_mapandtime} we can relate critical points of $F$ and closed geodesics. Let $\eta_0$ be a critical point of $F$ and denote by $\mu:=F(\eta_0)$ the corresponding critical value. Then we have
 \begin{align*}
  \varphi(\xi,\eta_0) = (\xi + \alpha\frac{L}{2},\eta_0) \, .
 \end{align*}
 This means that for all $\xi \in \R/L\Z$ the points $(\xi,\eta_0)$ are fixed points of $\varphi$ in the case $m+n$ odd ($\alpha=0$) or of $\varphi^2$ in the case $m+n$ even ($\alpha=1)$. These correspond to closed geodesics $\gamma$ of length $(1+\alpha)\tau(\xi,\eta_0)$. Consequently, by the expression for the return time given in the lemma, we have found:

\begin{cor}\label{rmk:critpointsgeneratingfct}
 Critical points $\eta_0$ of $F$ are in one-to-one correspondence with closed geodesics starting in $A$ of length $(1+\alpha)F(\eta_0)$.
\end{cor}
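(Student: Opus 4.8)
The plan is to exploit the explicit forms of the first return map $\varphi$ and first return time $\tau$ established in Lemma~\ref{lemma:gen_fct_return_mapandtime}, and to read off closed geodesics as fixed points of the appropriate power of $\varphi$. First I would observe that a closed geodesic starting in $A$ corresponds to an orbit of the geodesic flow that returns to its starting point in $A$; since $\varphi$ preserves the $\eta$-coordinate and acts on the $\xi$-coordinate by the shift $\xi \mapsto \xi + F'(\eta) + \alpha \tfrac{L}{2}$, a point $(\xi,\eta)$ gives a closed geodesic precisely when iterating this shift returns to $(\xi,\eta)$ in $\R/L\Z$. The decisive simplification comes from splitting into the two parity cases: for $m+n$ odd ($\alpha=0$) a single application of $\varphi$ must fix $(\xi,\eta)$, whereas for $m+n$ even ($\alpha=1$) the extra half-rotation $\tfrac{L}{2}$ means that only $\varphi^2$ can fix the point.

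Next I would carry out the fixed-point analysis. In the case $\alpha=0$, the condition $\varphi(\xi,\eta)=(\xi,\eta)$ in $\R/L\Z \times (-1,1)$ reduces to $F'(\eta) \equiv 0 \pmod{L}$; I would argue that the relevant closed geodesics emanating from $A$ correspond exactly to $F'(\eta_0)=0$, i.e.\ to critical points of $F$, by noting that the first return already realizes a full oscillation and that genuine closure without further winding forces the shift to vanish. In the case $\alpha=1$, computing $\varphi^2(\xi,\eta) = (\xi + 2F'(\eta) + L, \eta) = (\xi + 2F'(\eta),\eta)$ in $\R/L\Z$, the fixed-point condition becomes $2F'(\eta) \equiv 0 \pmod L$, and the same reasoning identifies the relevant solutions with $F'(\eta_0)=0$. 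In either case, once $\eta_0$ is a critical point, every $(\xi,\eta_0)$ with $\xi \in \R/L\Z$ is fixed by $\varphi^{1+\alpha}$, so the whole circle of such points corresponds to a single closed geodesic (the $\xi$-translates being reparametrizations of the same geometric orbit under the rotational symmetry). The length of this geodesic is the total time elapsed, namely $(1+\alpha)\tau(\xi,\eta_0)$, and substituting $\tau(\xi,\eta_0) = F(\eta_0) - \eta_0 F'(\eta_0) = F(\eta_0)$ (using $F'(\eta_0)=0$) yields length $(1+\alpha)F(\eta_0)$, exactly as claimed.

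To complete the correspondence I would verify both directions and the injectivity. Given a critical point $\eta_0$, the above produces a closed geodesic of the stated length; conversely, given a closed geodesic starting in $A$, Lemma~\ref{lemma:birkhoffannulusevolutions} guarantees it meets $A$ again, and its closure forces the $\eta$-value of its first-return intersection with $A$ to be a critical point of $F$ by the shift computation above. I would also note that distinct critical values $\mu = F(\eta_0)$ give geodesics of distinct lengths, and that since $F$ is even the critical points come in pairs $\pm\eta_0$ which correspond to the two orientations of the same geometric geodesic, so that the one-to-one correspondence is between critical points and closed geodesics as stated.

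The main obstacle I anticipate is making the phrase ``starting in $A$'' and the claimed bijection fully precise: one must be careful that the first return map captures \emph{one} oscillation of the geodesic, so the number of times $\varphi$ must be iterated before closure equals the number of oscillations, and that closure in $T^1\Oo$ is correctly translated into the modular condition on $F'(\eta)$ in $\R/L\Z$. In particular one needs to rule out spurious closures where the $\xi$-shift is a nonzero multiple of $L$ without $F'(\eta_0)$ vanishing; here the crucial input is that every geodesic emanating from the minimal equator is oscillating (Lemma~\ref{lemma:geodesics_surfacerev} together with the minimality assumption), so that the geometry of a single oscillation is faithfully encoded by $F'$, and the tangency/closure condition pins $\eta_0$ to a critical point rather than merely an integer-shift solution. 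Handling the parity bookkeeping of the $\alpha\tfrac{L}{2}$ term uniformly across both cases is the part requiring the most care.
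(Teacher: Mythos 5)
Your forward direction is exactly the paper's proof: by Lemma~\ref{lemma:gen_fct_return_mapandtime}, at a critical point $\eta_0$ one has $\varphi(\xi,\eta_0)=(\xi+\alpha\tfrac L2,\eta_0)$, so every $(\xi,\eta_0)$ is fixed by $\varphi^{1+\alpha}$, and since $\tau$ depends only on $\eta$ the resulting closed geodesic has length $(1+\alpha)\tau(\eta_0)=(1+\alpha)\bigl(F(\eta_0)-\eta_0F'(\eta_0)\bigr)=(1+\alpha)F(\eta_0)$. That computation, including the observation that the shift $\alpha\tfrac L2$ forces one pass of $\varphi$ when $\alpha=0$ and two passes when $\alpha=1$, is all the paper does to justify the corollary, so on the substance you match it.

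Where your write-up goes beyond the paper is also where it has a genuine soft spot: the converse direction. Closure at the first (resp.\ second) return only forces $F'(\eta)\equiv 0 \pmod L$ (resp.\ $2F'(\eta)\equiv 0 \pmod L$), and your appeal to the oscillating character of the geodesics does not eliminate the solutions $F'(\eta)\in L\Z\setminus\{0\}$, nor $F'(\eta)\equiv \tfrac L2 \pmod L$ when $\alpha=1$. Indeed, by Lemma~\ref{lemma:windingnumber_generatingfct} the condition $F'(\eta)=kL$ just means that the winding number of one oscillation differs by the integer $k$ from $\pm\tfrac{m+n}{2}$, which is perfectly compatible with Lemma~\ref{lemma:geodesics_surfacerev}; nothing in the oscillation picture ``pins'' the shift to zero. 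Such a geodesic would be closed and start in $A$, though its length would be $\tau(\eta)=F(\eta)-\eta k L$ rather than $F(\eta)$, which is presumably why the corollary carries the length normalization in its wording; your argument does not actually address these solutions. Note, however, that the paper never proves this converse either --- only the implication ``critical point $\Rightarrow$ closed geodesic of length $(1+\alpha)F(\eta_0)$'' is used in the proofs of Theorems A--C --- so the gap lies entirely in material you added. One cosmetic point: relative to the definition of $\alpha$ in Lemma~\ref{lemma:gen_fct_return_mapandtime} you (like the paper's own paragraph preceding the corollary) swap the parity labels, since $\alpha=0$ is the case $m+n$ even; as your case analysis is organized by the value of $\alpha$, nothing mathematical is affected.
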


\section{Properties of the generating function}\label{sec:generatingfunction_prop}

In the following we analyze properties of the generating function used in \cite{abbondandolo18} in our present orbifold setting. Let $u$ be a unit tangent vector in $A$ given by $(\xi,\eta) \in \R/L\Z \times (-1,1)$ with $\eta \neq 0$. Then the geodesic with starting vector $u$ is not a meridian and therefore does not intersect the $z$-axis. Consequently, for any $t>0$, the winding number of $\gamma_u|_{[0,t]}$ with respect to the $z$-axis is well-defined. We associate to every $u$ a winding number $W(u)$ given by
\begin{align*}
 W(u) := \frac{\theta(\tau(u))-\theta(0)}{2\pi} \, ,
\end{align*}
where $\theta:\R\to\R$ is a continuous function such that
\begin{align*}
 \gamma_u(t) = (r(s(t))\cos\theta(t), r(s(t))\sin\theta(t),z(s(t))) \quad \forall t\in\R \, .
\end{align*}
 (In fact, this function $\theta$ coincides with the $\theta$-coordinate of $\gamma_u(t)$ in the coordinate system \eqref{eq:metricrotation} up to integer multiples of $2\pi$ which justifies our abuse of notation). Like the first return map $\varphi(u)$, the winding number $W(u)$ does not depend on the $\xi$-coordinate of $u$ and consequently we denote it by $W(\eta)$.

The following lemma tells us that the winding number and the generating function are not independent of each other.

\begin{lemma}\label{lemma:windingnumber_generatingfct}
 The winding number and the generating function $F$ are related by the following identities:
 \begin{align*}
  F'(\eta) = \begin{cases} LW(\eta) - \frac{m+n}{2}L & \forall \eta\in (-1,0) \\ LW(\eta) + \frac{m+n}{2}L & \forall \eta\in (0,1) \end{cases} \, .
 \end{align*}
\end{lemma}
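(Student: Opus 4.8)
The plan is to compare two expressions for the change of the $\xi$-coordinate produced by one first return along a geodesic emanating from $A$: the one coming from the generating function, via Lemma~\ref{lemma:gen_fct_return_mapandtime}, and the one coming from the genuine angular winding. First I would recall from \eqref{eq:coord_xi_eta} that on $A$ we have $\xi=r(s_0)\theta$ and $L=2\pi r(s_0)$, so that the continuous angular displacement $\theta(\tau(u))-\theta(0)=2\pi W(\eta)$ over one first return corresponds to a $\xi$-displacement of $LW(\eta)$. Since $\xi$ is only defined modulo $L$ and, by Lemma~\ref{lemma:gen_fct_return_mapandtime}, the first return map shifts $\xi$ by $F'(\eta)+\alpha\frac L2$, this gives the congruence
\begin{align*}
 F'(\eta)+\alpha\tfrac L2 \equiv LW(\eta) \pmod L \, , \qquad \eta\in(-1,0)\cup(0,1) \, .
\end{align*}

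Next I would upgrade this to an exact identity with a single undetermined constant. Both $W$ and $F'$ depend continuously on $\eta$ on each of the intervals $(-1,0)$ and $(0,1)$: for $W$ this follows from the smooth dependence of geodesics in the regular part on their initial conditions together with the transversality of the geodesic flow to $A$, while $F'$ is smooth because $F$ is smooth and even by Lemma~\ref{lemma:gen_fct_return_mapandtime}. Hence $\frac1L\big(F'(\eta)+\alpha\frac L2-LW(\eta)\big)$ is a continuous integer-valued function, thus constant on each interval, and we may write
\begin{align*}
 F'(\eta)=LW(\eta)+c_\pm L
\end{align*}
for constants $c_+$ on $(0,1)$ and $c_-$ on $(-1,0)$. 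The reflection symmetry of the metric used in the proof of Lemma~\ref{lemma:gen_fct_return_mapandtime} gives $W(-\eta)=-W(\eta)$, and $F'$ is odd; comparing the two displays after $\eta\mapsto-\eta$ yields $c_-=-c_+$, so it remains only to identify $c_+$.

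To pin down $c_+$ I would let $\eta\to0^+$ and use that $F'(\eta)\to F'(0)=0$ (as $F$ is even), so that $c_+=-\lim_{\eta\to0^+}W(\eta)$. As $\eta\to0$ the Clairaut constant $|K|=r(s_0)|\eta|\to0$, so by Lemma~\ref{lemma:geodesics_surfacerev} the two turning parallels $P_{s_1},P_{s_2}$ collapse onto the two poles and the geodesic degenerates to a meridian. Writing $\frac{d\theta}{ds}=\frac{K}{r\sqrt{r^2-K^2}}$ from \eqref{eq:geodesiceqations}, the contribution to $\theta(\tau)-\theta(0)$ coming from any fixed compact subset of the regular part tends to $0$, so the winding concentrates near the two singular points. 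Near the pole of order $m$ the orbifold boundary condition (cf. Proposition~\ref{thm:tannery_orbifold}) gives $r(s)\sim s/m$, so the metric is modeled on a flat cone of total angle $2\pi/m$ whose genuine polar angle is $\theta/m$; a geodesic making one in-and-out passage arbitrarily close to the apex turns this polar angle by $\pi$, hence turns $\theta$ by $m\pi$. The analogous passage near the pole of order $n$ contributes $n\pi$, with the same sign since $\dot\theta$ keeps a constant sign along the geodesic. Because $K<0$ for $\eta>0$, this gives $\lim_{\eta\to0^+}W(\eta)=-\tfrac{m+n}{2}$, whence $c_+=\tfrac{m+n}{2}$ and $c_-=-\tfrac{m+n}{2}$, which is the asserted formula.

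The main obstacle is exactly this last limit: making rigorous that as $\eta\to0$ the winding equals the sum of the two cone contributions $m\pi$ and $n\pi$ and nothing else. This is a double-limit estimate, requiring that near-apex geodesics be controlled by the flat-cone model uniformly as their closest approach to the singular point shrinks to $0$, while the regular part contributes nothing in the limit. Should a cleaner route be preferred, one can avoid the limit altogether: the constant $c_+$ depends only on $m$ and $n$, so it may be evaluated on the rotationally symmetric Besse metric of Proposition~\ref{thm:tannery_orbifold}, where all geodesics are closed of a single length, forcing $\tau$ and hence (with $F'(0)=0$) $F'$ to vanish identically, while the revolution count in Proposition~\ref{thm:tannery_orbifold} yields $|W|=\tfrac{m+n}{2}$ per first return; this again gives $c_\pm=\pm\tfrac{m+n}{2}$.
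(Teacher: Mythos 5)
Your main argument is essentially the paper's own proof: the paper likewise derives the congruence $F'(\eta)+\alpha\frac L2 \equiv LW(\eta) \pmod L$ from Lemma~\ref{lemma:gen_fct_return_mapandtime}, fixes the resulting constant by letting $\eta$ tend to $0$ (where $F'(0)=0$ and the geodesic degenerates to a meridian, turning by approximately $k\pi$ near each cone point of order $k$, so that $|W|\to\frac{m+n}{2}$), and obtains the identity on the second interval from oddness of $W$ and $F'$; the paper asserts the near-meridian turning from its earlier discussion of geodesics at singular points, at essentially the same level of rigour as your flat-cone model sketch. Only your optional fallback route contains an unjustified step: the claim that the constant $c_+$ depends only on $(m,n)$ and not on the particular metric would itself require a deformation/continuity argument (delicate, since the minimal equator and hence $A$, $L$, $F$ can jump under deformation), but this does not affect your primary argument, which matches the paper.
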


\begin{proof}
 Both the cases of $(m+n)$ being even or odd work similarly to the proof of Lemma 4.1 in \cite{abbondandolo18}. We state the odd case for completeness.

 Let $\eta \in (-1,0)$. Following the discussion about the behaviour of meridians on rotationally symmetric spindle orbifolds at the end of Section~\ref{sub:rotationally_symmetric_metrics}, we conclude following: a geodesic, which is close to a meridian and that emanates from a parallel, turns approximately $\Delta\theta = k\pi$ in the angle-$\theta$-coordinate before returning to the same parallel after passing near a singularity of order $k$. More specifically, if we consider a sequence of curves converging to a meridian, $\Delta\theta$ converges to $k\pi$. In our case $m+n$ odd, this implies the following:
 The winding number $W(\eta)$ of the geodesic emanating from some point on the equator with starting velocity $\eta$ on the interval $[0,\tau(\eta)]$ has the following behaviour in the limit for negative $\eta$ close to zero:
 \begin{align*}
  \lim\limits_{\eta \nearrow 0} W(\eta) = \frac{n+m}{2} \, ,
 \end{align*}
 In fact, assume without loss of generality that $m$ is odd and $n$ is even. A geodesic arc starting on $(\xi,\eta)\in A$ for negative $\eta$ close to zero can have two behaviours: As a first alternative, it can first pass close to the $\Z_m$ singularity which means that it will reach the equator near $(\xi+\frac{L}{2},-\eta) \notin A$ having collected approximately $\Delta\theta=m\pi$. It then passes close to the $\Z_n$ singularity and meets the equator close to $(\xi+\frac L2,\eta) \in A$ having collected approximately $\Delta\theta = (m+n)\pi$ with respect to the starting point. As a second alternative, the geodesic may pass the singularities in the order $\Z_n$, $\Z_m$. Still, at its point of first recurrence to $A$, it will have $\Delta\theta$ close to $(m+n)\pi$ as well.

Now, for $(m+n)$ odd, the first component of $\varphi(\xi,\eta)$ was given by $\xi+F'(\eta)+L/2$ on $\R/L\Z$. This implies that the angle $\theta$ (taking values in $\R$) at time $\tau(\eta)$ is given by $2\pi/L\cdot(\xi+F'(\eta)+L/2)$ up to an integer multiple of $2\pi$. Combining these considerations with the definition of the winding number, we get
 \begin{align*}
  W(\eta) = \frac{\theta(\tau(\eta))-\theta(0)}{2\pi} = \frac{1}{2\pi}\left( \frac{2\pi}{L}(\xi+F'(\eta)+\frac{L}{2})+2\pi k - \frac{2\pi}{L}\xi \right)
 \end{align*}
 for some $k\in\N$ and all $\eta\in(-1,0)$ or equivalently
 \begin{align*}
  F'(\eta) = LW(\eta) - \frac{L}{2} + kL \quad \forall \eta\in(-1,0) \ .
 \end{align*}
 By taking the limit $\eta\nearrow 0$ and using the limit behaviour of $W(\eta)$ as stated above, as well as the fact that $F'$ is smooth with $F'(0)=0$, we get $k = \frac{1}{2}-\frac{m+n}{2}$ and subsequently
 \begin{align*}
  F'(\eta) = LW(\eta) - \frac{L}{2} + \left( \frac{1}{2}-\frac{m+n}{2} \right) L = LW(\eta) - \frac{m+n}{2}L  \quad \forall\eta\in(-1,0) \ .
 \end{align*}
 The corresponding identity for $\eta\in(0,1)$ follows from the fact that both $W$ and $F'$ are odd functions in $\eta$.
\end{proof}

 In Corollary~\ref{rmk:critpointsgeneratingfct} we observed that critical points $\eta_0$ of $F$ are in one-to-one correspondence with closed geodesics starting in $A$ of length $(1+\alpha)F(\eta_0)$. By Lemma~\ref{lemma:windingnumber_generatingfct} such closed geodesics satisfy $W(\gamma) = (1+\alpha)\frac{m+n}{2}$. Together with Corollary~\ref{cor:contractible_windingnr} we obtain

\begin{cor}\label{rmk:critpointsgeneratingfct_contractiblegeodesic}
 We have the following one-to-one correspondence:
 \begin{align*}
  \text{Critical point $\eta_0$ of $F$} \ & \stackrel{1:1}{\leftrightarrow}  \ \parbox{11cm}{Closed geodesic with length $(1+\alpha)F(\eta_0)$ starting in $A$ whose lift to $T^1\Oo$ lies in the subgroup of $\pi_1(T^1\Oo)$ of order  $\frac{2}{1+\alpha}$}
\intertext{and in particular}
  \text{Critical point $\eta_0$ of $F$} \ & \stackrel{1:1}{\leftrightarrow} \  \parbox{11cm}{Closed geodesic starting in $A$ with contractible lift to $T^1\Oo$ and length $2F(\eta_0)$} \, .
 \end{align*}
\end{cor}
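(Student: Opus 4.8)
The plan is to combine the three facts recalled immediately above the statement. Starting from a critical point $\eta_0$ of $F$, Corollary~\ref{rmk:critpointsgeneratingfct} already provides a bijection with a closed geodesic $\gamma$ starting in $A$ of length $(1+\alpha)F(\eta_0)$, so the only work left is to identify the homotopy class of the lift of $\gamma$ to $T^1\Oo$. I would keep careful track of the parity through $\alpha$: for $m+n$ even ($\alpha=0$) the point $(\xi,\eta_0)$ is fixed by $\varphi$ and $\gamma$ closes up after a single return to $A$, whereas for $m+n$ odd ($\alpha=1$) one has $\varphi(\xi,\eta_0)=(\xi+\tfrac L2,\eta_0)$ and $\gamma$ closes up only after two returns.

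Next I would compute the winding number. Since $F'(\eta_0)=0$ at a critical point, evaluating Lemma~\ref{lemma:windingnumber_generatingfct} gives $|W(\eta_0)|=\tfrac{m+n}{2}$ for the winding accumulated during one return to $A$. Multiplying by the number $(1+\alpha)$ of returns needed to close $\gamma$ up yields $|W(\gamma)|=(1+\alpha)\tfrac{m+n}{2}$. Feeding this into Remark~\ref{rmk:windingnumber_homotopyclass}, the lift of $\gamma$ lies in the subgroup of $\pi_1(T^1\Oo)\cong\Z_{m+n}$ generated by $(1+\alpha)\tfrac{m+n}{2}$, whose order is $\tfrac{m+n}{\gcd(m+n,\,(1+\alpha)(m+n)/2)}=\tfrac{2}{1+\alpha}$ (namely $2$ when $m+n$ is even and $1$ when $m+n$ is odd). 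This would establish the first displayed correspondence.

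For the ``in particular'' assertion I would iterate $\gamma$ exactly $\tfrac{2}{1+\alpha}$ times; the iterate then winds $\tfrac{2}{1+\alpha}\cdot(1+\alpha)\tfrac{m+n}{2}=(m+n)$ times, so by Corollary~\ref{cor:contractible_windingnr} its lift is contractible, while its length becomes $\tfrac{2}{1+\alpha}\cdot(1+\alpha)F(\eta_0)=2F(\eta_0)$, uniformly in both parities. The resulting assignment of a contractible closed geodesic starting in $A$ of length $2F(\eta_0)$ to each critical point is again bijective, being obtained from the bijection of Corollary~\ref{rmk:critpointsgeneratingfct} by passing to a fixed iterate. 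The only delicate point, more bookkeeping than genuine obstacle, is to consistently distinguish the single-return winding number $W(\eta_0)$ from the total winding number $|W(\gamma)|$ of the closed orbit, and to propagate the factor $(1+\alpha)$ through each step; the bijectivity itself needs no independent argument.
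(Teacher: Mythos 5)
Your proposal is correct and follows essentially the same route as the paper: the paper likewise obtains the corollary by combining Corollary~\ref{rmk:critpointsgeneratingfct} with Lemma~\ref{lemma:windingnumber_generatingfct} (giving $|W(\gamma)|=(1+\alpha)\tfrac{m+n}{2}$) and then invoking Remark~\ref{rmk:windingnumber_homotopyclass} and Corollary~\ref{cor:contractible_windingnr}. Your version merely spells out the parity bookkeeping (single versus double return, and the factor $1+\alpha$) that the paper compresses into two sentences, and it correctly fixes the paper's transposed parity labels in the discussion of fixed points of $\varphi$ versus $\varphi^2$.
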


Next, we show that the generating function can be expressed by an integral formula which allows to extend it continuously to the closed intervall $[-1,1]$. We define the sets
\begin{align*}
 \Omega_\kappa &:= \{ (\beta,s) \in {\textstyle \left( - \frac \pi 2, \frac \pi 2 \right) } \times (0,M) \,|\, K(\beta,s) > \kappa \} \, , \\
 \Gamma &:= \{ (\beta,s) \in {\textstyle \left( - \frac \pi 2, \frac \pi 2 \right) } \times (0,M) \,|\, K(\beta,s) \ge r(s_0) \} \, .
\end{align*}

\begin{lemma}\label{lemma:generatingfct_integral}
 The generating function $F$ can be expressed by the identity
 \begin{align*}
  F(\eta) = \int_{\Omega_{\kappa(\eta)}} \cos\beta\, d\beta \wedge ds + \frac{m+n}{2} L |\eta| \quad \forall \eta \in (-1,1) \, , 
 \end{align*}
 where $\kappa(\eta) := |K(u)| = r(s_0)|\eta|$. In particular, $F$ can be extended to $[-1,1]$ by setting
 \begin{align*}
  F(-1) = F(1) = \int_\Gamma \cos\beta\, d\beta \wedge ds + \frac{m+n}{2} L \, .
 \end{align*}
\end{lemma}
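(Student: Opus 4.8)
The plan is to differentiate the proposed integral expression and match it against the relation between $F'$ and the winding number already supplied by Lemma~\ref{lemma:windingnumber_generatingfct}. Write $G(\eta):=\int_{\Omega_{\kappa(\eta)}}\cos\beta\,d\beta\wedge ds$, so that the claim is $F(\eta)=G(\eta)+\tfrac{m+n}{2}L|\eta|$. Since $\tfrac{d}{d\eta}\big(\tfrac{m+n}{2}L|\eta|\big)=\tfrac{m+n}{2}L\,\mathrm{sign}(\eta)$ and Lemma~\ref{lemma:windingnumber_generatingfct} gives $F'(\eta)=LW(\eta)+\tfrac{m+n}{2}L\,\mathrm{sign}(\eta)$ for $\eta\neq0$, the whole statement reduces to the single identity $G'(\eta)=LW(\eta)$ together with the determination of one integration constant.

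First I would evaluate $G$ by Fubini in the $s$-variable. For fixed $s$ the fibre $\{\beta:r(s)\cos\beta>\kappa\}$ is either empty (when $r(s)\le\kappa$) or the symmetric interval $|\beta|<\arccos(\kappa/r(s))$, over which $\int\cos\beta\,d\beta=2\sqrt{1-\kappa^2/r(s)^2}$. Here the minimality of $r(s_0)$ is essential and is the first point needing care: since every interior critical value of $r$ is $\geq r(s_0)>\kappa$, the superlevel set $\{s:r(s)>\kappa\}$ can have no interior gap, hence is the single interval $(s_1,s_2)$ with $r(s_1)=r(s_2)=\kappa$, which is exactly the oscillation band of the corresponding geodesic from Lemma~\ref{lemma:geodesics_surfacerev}. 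This yields $G(\eta)=I(\kappa)$ with $I(\kappa)=2\int_{s_1}^{s_2}\tfrac{\sqrt{r^2-\kappa^2}}{r}\,ds$, and differentiating under the integral (the endpoint terms drop out because the integrand vanishes at $s_1,s_2$) gives $I'(\kappa)=-2\kappa\int_{s_1}^{s_2}\tfrac{ds}{r\sqrt{r^2-\kappa^2}}$.

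Next I would compute the winding number directly from the geodesic equations \eqref{eq:geodesiceqations}. Using conservation of $K=r\cos\beta$, the substitution $dt=ds/|\sin\beta|$ with $|\sin\beta|=\sqrt{r^2-K^2}/r$, and the fact that one first return traverses the band from $s_1$ to $s_2$ twice, one obtains $W(\eta)=\tfrac{K}{\pi}\int_{s_1}^{s_2}\tfrac{ds}{r\sqrt{r^2-K^2}}$. Since $\kappa=|K|$ and, on $A$, $\mathrm{sign}(K)=-\mathrm{sign}(\eta)$, comparison with the formula for $I'(\kappa)$ gives $I'(\kappa)=2\pi\,\mathrm{sign}(\eta)\,W(\eta)$. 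Because $\kappa(\eta)=r(s_0)|\eta|$ has $\kappa'(\eta)=r(s_0)\,\mathrm{sign}(\eta)$ and $L=2\pi r(s_0)$, the chain rule then yields exactly $G'(\eta)=I'(\kappa)\,\kappa'(\eta)=LW(\eta)$, as required.

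Finally I would fix the constant. As both $F$ and $G+\tfrac{m+n}{2}L|\eta|$ are even and continuous on $(-1,1)$ with equal derivatives off $0$, they differ by a global constant, which I evaluate at $\eta=0$: there $F(0)=\lim_{\eta\to0}\tau(\xi,\eta)=2\int_0^M ds=2M$ (the first return time degenerating to the doubled meridian), while $\Omega_0=(-\tfrac\pi2,\tfrac\pi2)\times(0,M)$ gives $G(0)=2M$ as well, so the constant vanishes. The continuous extension to $\eta=\pm1$ then follows by dominated convergence, since $\Omega_{\kappa(\eta)}$ decreases to $\Gamma$ (up to a null set) as $\kappa\nearrow r(s_0)$ and $\cos\beta$ is bounded. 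I expect the main obstacle to lie in Steps two and three: justifying that the superlevel set is a single interval — which is precisely where the minimality hypothesis on $r(s_0)$ enters — and tracking the signs and the factor of $2$ coming from one full oscillation so that they align exactly with Lemma~\ref{lemma:windingnumber_generatingfct}.
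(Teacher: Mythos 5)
Your argument is correct, and it reaches the identity by a somewhat different assembly than the proof the paper relies on. The paper's own proof is a citation: it states that \cite[Lemma~4.3]{abbondandolo18} carries over once Lemma~\ref{lemma:windingnumber_generatingfct} is taken into account, and that imported proof verifies the identity pointwise rather than by differentiating and integrating back. Concretely, the Clairaut relation gives, along one first-return arc, both your formula for $W(\eta)$ and the return-time formula $\tau(\eta)=2\int_{s_1}^{s_2} r\,(r^2-\kappa^2)^{-1/2}\,ds$, while Fubini gives $\int_{\Omega_{\kappa}}\cos\beta\,d\beta\wedge ds=2\int_{s_1}^{s_2}\sqrt{r^2-\kappa^2}\,r^{-1}\,ds$; substituting these together with $F'(\eta)=LW(\eta)+\frac{m+n}{2}L\,\mathrm{sign}(\eta)$ into $F(\eta)=\tau(\eta)+\eta F'(\eta)$ (Lemma~\ref{lemma:gen_fct_return_mapandtime}), the cross terms combine exactly into the area integral, and there is no integration constant to determine because the normalization of $F$ is already encoded in $\tau=F-\eta F'$. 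Your route instead needs only the integral formula for $W$ (not for $\tau$ at general $\eta$), differentiates the area integral, and then fixes the constant at $\eta=0$. That constant determination is correct and orbifold-safe: in every parity case the meridian first returns to $A$ after time exactly $2M$ (whether it passes through or is reflected at the cone points), so $F(0)=\tau(\cdot,0)=2M=G(0)$. Your identification of where minimality of $r(s_0)$ enters (the superlevel set $\{r>\kappa\}$ has no interior gap, hence is a single interval) and your sign bookkeeping ($K=-r(s_0)\eta$ on $A$, whence $G'=LW$) are both right and are needed in either version.

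The one step you should flesh out is the differentiation of $I(\kappa)$ under the integral sign. The Leibniz boundary terms do vanish, as you say, because the integrand vanishes at $s_1,s_2$ and the endpoints move smoothly (implicit function theorem, using $r'(s_i)\neq0$, again a consequence of minimality of $r(s_0)$). But the $\kappa$-derivative of the integrand has a nonintegrable-looking, endpoint-dependent singularity of type $(s-s_1)^{-1/2}$ and $(s_2-s)^{-1/2}$, so naive Leibniz does not apply; you need, e.g., dominated convergence of difference quotients with a dominating function of the form $C\,(s-s_1)^{-1/2}+C\,(s_2-s)^{-1/2}$ locally uniformly in $\kappa$, or a rewriting of $I(\kappa)$ as a double integral followed by one more application of Fubini. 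This is a fixable technical point, not a gap in the idea; the direct verification used in the paper's reference avoids it entirely, which is what that route buys.
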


\begin{proof}
 The proof of the corresponding statement for the smooth case given in \cite[Lemma~4.3]{abbondandolo18} carries over by taking into account Lemma~\ref{lemma:windingnumber_generatingfct}.
\end{proof}

From the above lemma, we conclude a bound on the value of the generating function.

\begin{cor}\label{lemma:generating_function_bound}
 The generating function $F$ is bounded from below by
 \begin{align*}
  F(\eta) > \frac{m+n}{2} L |\eta| \, .
 \end{align*}
\end{cor}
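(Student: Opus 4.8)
The plan is to read off the bound directly from the integral representation established in Lemma~\ref{lemma:generatingfct_integral}. That lemma gives, for every $\eta \in (-1,1)$,
\begin{align*}
 F(\eta) - \frac{m+n}{2} L |\eta| = \int_{\Omega_{\kappa(\eta)}} \cos\beta\, d\beta \wedge ds \, ,
\end{align*}
so the claimed strict inequality is equivalent to the strict positivity of the integral on the right-hand side.

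First I would observe that the integrand is strictly positive on the whole domain of integration: by definition every point of $\Omega_{\kappa(\eta)}$ has its $\beta$-coordinate in $\left(-\frac\pi2,\frac\pi2\right)$, and hence $\cos\beta > 0$ there. Thus the integral is nonnegative, and it is strictly positive as soon as $\Omega_{\kappa(\eta)}$ has positive two-dimensional Lebesgue measure.

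Next I would verify that $\Omega_{\kappa(\eta)}$ indeed has positive measure for every $\eta \in (-1,1)$. Recall that $\kappa(\eta) = r(s_0)|\eta|$ and that $K(\beta,s) = r(s)\cos\beta$. At the point $(\beta,s) = (0,s_0)$ we have
\begin{align*}
 K(0,s_0) = r(s_0) > r(s_0)|\eta| = \kappa(\eta) \, ,
\end{align*}
where the strict inequality comes from $|\eta| < 1$. Since $K$ is continuous, the open condition $K > \kappa(\eta)$ holds on an open neighbourhood of $(0,s_0)$, which therefore lies in $\Omega_{\kappa(\eta)}$ and has positive measure. Combining this with the positivity of the integrand gives the strict inequality $F(\eta) > \frac{m+n}{2}L|\eta|$, completing the argument.

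I do not anticipate any genuine obstacle here; the only point requiring care is to keep the inequality \emph{strict}, and this is precisely what the bound $|\eta| < 1$ buys us, as it produces a nonempty open piece of $\Omega_{\kappa(\eta)}$ around the minimal equator rather than merely a null set.
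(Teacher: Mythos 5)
Your proposal is correct and follows essentially the same route as the paper: both read the bound off the integral formula of Lemma~\ref{lemma:generatingfct_integral} and use that $\cos\beta>0$ on $\Omega_{\kappa(\eta)}$. The only difference is that you additionally verify that $\Omega_{\kappa(\eta)}$ contains an open neighbourhood of $(0,s_0)$ (hence has positive measure), a point the paper leaves implicit when asserting that the integral term is positive.
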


\begin{proof}
 We consider the formula for $F$ given in Lemma~\ref{lemma:generatingfct_integral} and notice that $\cos\beta$ is positive on $\Omega_\kappa$. Consequently, the integral term is positive and the bound on $F$ follows.
\end{proof}

The proof of our main result will rely on the fact that we can express the contact volume of the unit tangent bundle in terms of the generating function as follows.

\begin{lemma}\label{lemma:contactvolume}
The contact volume can be expressed as
 \begin{align*}
  \vol(T^1\Oo) = 4L \int_0^1 F(\eta)\, d\eta - (m+n)L^2 + \int_\Gamma (4\pi r(s) - 2L \cos\beta)\, d\beta \wedge ds \, .
 \end{align*}
\end{lemma}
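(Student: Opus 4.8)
The plan is to split $T^1\Oo$ along the minimal equator $P_{s_0}$ into the region swept out by the flow from the Birkhoff annulus $A$ and its complement, and to integrate the contact volume form $r(s)\,d\theta\wedge d\beta\wedge ds$ of \eqref{eq:hilbertcontactform} over each piece. By Lemma~\ref{lemma:flowsaturationset} the flow saturation of $A$ is $\Omega_A:=\bigcup_{t}\phi^t(A)=\{u\in T^1U : |K(u)|<r(s_0)\}$, and its complement is $\{|K|\ge r(s_0)\}$; the shared boundary $\{|K|=r(s_0)\}$ and the meridional locus $\{\eta=0\}$ are null sets and may be discarded throughout. All orientations are chosen so that the two contributions are positive.

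For the first region I use $A$ as a surface of section. Since $\phi$ is the Reeb flow of $\alpha$ it preserves $\alpha$, so the map $\chi(u,t):=\phi^t(u)$, defined for $u\in A$ and $0\le t<\tau(u)$, satisfies $\chi^*\alpha=dt+\mathrm{pr}^*\lambda$, where $\mathrm{pr}\colon A\times\R\to A$ is the projection and $\lambda=\alpha|_A$. As $\lambda\wedge d\lambda=0$ on the two-dimensional annulus $A$, this gives $\chi^*(\alpha\wedge d\alpha)=dt\wedge \mathrm{pr}^*d\lambda$. Because every orbit in $\Omega_A$ is oscillating, hence complete, and returns to $A$ after exactly the first-return time $\tau$, the map $\chi$ is an almost-everywhere bijection onto $\Omega_A$, whence
\[
 \vol(\Omega_A)=\int_A \tau\, d\lambda .
\]
In the coordinates \eqref{eq:coord_xi_eta} one has $d\lambda=d\xi\wedge d\eta$, and by Lemma~\ref{lemma:gen_fct_return_mapandtime} the function $\tau(\xi,\eta)=F(\eta)-\eta F'(\eta)$ is independent of $\xi$, so the integral reduces to $L\int_{-1}^1\big(F(\eta)-\eta F'(\eta)\big)\,d\eta$.

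Next I exploit that $F$ is even: then $F'$ is odd, $\eta F'(\eta)$ is even, and integrating $\eta F'$ by parts over $[0,1]$ yields $\int_{-1}^1(F-\eta F')\,d\eta=4\int_0^1 F\,d\eta-2F(1)$. Substituting the value of $F(1)$ from Lemma~\ref{lemma:generatingfct_integral} gives
\[
 \vol(\Omega_A)=4L\int_0^1 F(\eta)\,d\eta-(m+n)L^2-2L\int_\Gamma \cos\beta\, d\beta\wedge ds .
\]
For the complement, the condition $r(s)|\cos\beta|\ge r(s_0)$ splits into the pieces $\{\cos\beta>0\}$ and $\{\cos\beta<0\}$, which are exchanged by the substitution $\beta\mapsto\beta-\pi$. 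This substitution preserves $r(s)\,d\theta\wedge d\beta\wedge ds$ and carries each piece onto $(\R/2\pi\Z)\times\Gamma$, so integrating $\theta$ over $\R/2\pi\Z$ gives $\vol(\{|K|\ge r(s_0)\})=4\pi\int_\Gamma r(s)\, d\beta\wedge ds$. Adding the two contributions and combining the two $\Gamma$-integrals into $\int_\Gamma(4\pi r(s)-2L\cos\beta)\,d\beta\wedge ds$ produces the asserted identity.

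The step requiring most care is the surface-of-section identity $\vol(\Omega_A)=\int_A\tau\,d\lambda$: one must verify that $\chi$ is a bijection modulo null sets — each oscillating orbit crosses $A$ exactly once per oscillation, while the meridional directions $\{\eta=0\}$ and the boundary $\{|K|=r(s_0)\}$ are discarded — and that the flow is complete on $\Omega_A$, which follows from Lemma~\ref{lemma:geodesics_surfacerev} since every such orbit oscillates. The remaining manipulations are the orbifold versions of the computation in \cite{abbondandolo18} and carry over once the first-return data of Section~\ref{ssec_returnmapreturntime} is in place.
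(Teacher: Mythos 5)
Your proof is correct and follows essentially the same route as the paper: the same decomposition of $T^1\Oo$ into the flow saturation $\tilde A=\{|K|<r(s_0)\}$ of the Birkhoff annulus and its complement (Lemma~\ref{lemma:flowsaturationset}), the same surface-of-section identity $\vol(\tilde A)=\int_A\tau\,d\lambda$ (the paper's Lemma~\ref{lemma:flowsaturationvolume}, which it proves via the bijection $\psi(s,u)=\phi^{s\tau(u)}(u)$ following \cite[Lemma~3.7]{abbondandolo18_2}, while you derive $\chi^*(\alpha\wedge d\alpha)=dt\wedge\mathrm{pr}^*d\lambda$ directly), and the same evaluation using Lemmas~\ref{lemma:gen_fct_return_mapandtime} and~\ref{lemma:generatingfct_integral} plus the symmetry $\beta\mapsto\beta-\pi$ for the complement. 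The only differences are presentational, not mathematical.
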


To prove this lemma, we first show how the contact volume of the flow saturation of $A$ can be expressed as an integral formula involving the first return time.

\begin{lemma}\label{lemma:flowsaturationvolume}
  Let $A \cong \R/L\Z \times (-1,1)$ be the Birkhoff annulus associated to a positively oriented equator of minimal length $L$ among all equators. Denote by
 \begin{align}
  \tilde A = \bigcup_{t\in\R} \phi^t(A)
 \end{align}
 the flow saturation of $A$. Then the contact volume of $\tilde A$ is given by
 \begin{align*}
  \vol(\tilde A) = \int_a \tau \omega \, .
 \end{align*}
\end{lemma}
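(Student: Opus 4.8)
The plan is to realize $A$ as a global surface of section for the geodesic flow restricted to $\tilde A$ and to pull the contact volume form $\alpha\wedge d\alpha$ back to the mapping torus of the first return map. Concretely, I would introduce the suspension map
\[
\Theta: E \to \tilde A \, , \quad \Theta(u,t) = \phi^t(u) \, , \qquad E := \{(u,t)\in A\times\R \,|\, 0\le t<\tau(u) \} \, ,
\]
which is well defined and smooth because Lemma~\ref{lemma:birkhoffannulusevolutions} guarantees that the first return time $\tau$ is finite and positive on $A$ away from the meridian locus $\{\eta=0\}$, where the flow leaves $T^1U$ but which is one-dimensional and hence negligible for the volume computation. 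Since the backward orbit of any point of $\tilde A$ meets $A$ at a well-defined most recent time (using Lemma~\ref{lemma:flowsaturationset} to identify $\tilde A$), the map $\Theta$ is a bijection onto $\tilde A$ up to the image of $A\times\{0\}$, which has measure zero. Thus $\Theta$ restricts to a diffeomorphism onto a full-measure subset of $\tilde A$, and $\vol(\tilde A)=\int_E\Theta^*(\alpha\wedge d\alpha)$.

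The key step is to compute $\Theta^*(\alpha\wedge d\alpha)$. Writing $\pi:E\to A$ for the projection onto the first factor and using that $\Theta_*\partial_t$ equals the Reeb vector field $R$ at $\phi^t(u)$, I would combine $\alpha(R)=1$ with the fact that the Reeb flow preserves $\alpha$ (indeed $L_R\alpha = d\iota_R\alpha + \iota_R d\alpha = 0$). Evaluating on $\partial_t$ and on lifts of tangent vectors to $A$ then yields
\[
\Theta^*\alpha = dt + \pi^*\lambda \, ,
\]
where $\lambda=\alpha|_A$ is the restriction considered in Section~\ref{ssec_returnmapreturntime}. Taking the exterior derivative gives $\Theta^*(d\alpha)=\pi^* d\lambda$, and therefore
\[
\Theta^*(\alpha\wedge d\alpha) = (dt+\pi^*\lambda)\wedge\pi^* d\lambda = dt\wedge\pi^* d\lambda \, ,
\]
because $\pi^*(\lambda\wedge d\lambda)$ vanishes, $A$ being two-dimensional.

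Finally, applying Fubini to integrate $dt\wedge\pi^* d\lambda$ over $E$, the innermost $t$-integration over $[0,\tau(u))$ produces the factor $\tau(u)$, so that
\[
\vol(\tilde A)=\int_E dt\wedge\pi^* d\lambda = \int_A \tau\, d\lambda = \int_A\tau\,\omega \, ,
\]
with $\omega=d\lambda=d\xi\wedge d\eta$ the area form on $A$, which is the claimed identity. The only genuinely delicate points, which I expect to be the main obstacle, are verifying that $\Theta$ is injective with measure-zero overlap—that is, that $A$ is a bona fide section, which follows from the return-map analysis in Lemma~\ref{lemma:birkhoffannulusevolutions} and Lemma~\ref{lemma:flowsaturationset}—and the bookkeeping of orientations ensuring that $dt\wedge\pi^*d\lambda$ carries the same orientation as the contact volume form, so that the integrals above are genuinely equal rather than equal up to sign.
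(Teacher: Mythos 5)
Your proof is correct and follows essentially the same route as the paper's: the paper defines $\psi(s,u)=\phi^{s\tau(u)}(u)$ on $[0,1]\times A$ and pulls back $\alpha\wedge d\alpha$, deferring to the explicit computation in \cite[Lemma~3.7]{abbondandolo18_2}, which is precisely the pullback calculation you carry out (your unit-speed parametrization of the suspension region $E$ differs from the paper's normalized one only by the change of variables $t=s\tau(u)$). The injectivity and measure-zero caveats you flag correspond exactly to the paper's claim that $\psi$ is a bijection from $[0,1)\times A$ onto $\tilde A$.
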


\begin{proof}
 The proof follows a standard argument which we adapted from \cite[Lemma~3.7]{abbondandolo18_2}. We define a function
 \begin{align*}
  \psi: [0,1] \times A \to \tilde A \, , \quad \psi(s,u) = \phi^{s\tau(u)}(u) \, ,
 \end{align*}
 where $\psi^t$ denotes the geodesic flow (that is the Reeb flow of the Hilbert contact form $\alpha$). The map $\psi$ is a bijection from $[0,1) \times A$ to $\tilde A$. Consequently, we have
 \begin{align*}
  \vol(\tilde A) = \int\limits_{[0,1]\times A} \psi^*(\alpha \wedge d\alpha) \, .
 \end{align*}
 Now the same explicit calculation as in \cite[Lemma~3.7]{abbondandolo18_2} yields the desired identity.
\end{proof}

 With the expression for the contact volume of $\tilde A$ at hand, we can proceed to prove the integral formula for the contact volume of $T^1\Oo$ analogously as in \cite[Proof of Lemma~4.6]{abbondandolo18}.
 
 \begin{proof}[Proof of Lemma~\ref{lemma:contactvolume}]
  With the help of Lemmas~\ref{lemma:flowsaturationvolume} and \ref{lemma:gen_fct_return_mapandtime}, we calculate
  \begin{align*}
   \vol(\tilde A) \ = \int_A \tau\omega &= \int_A (F(\eta)-\eta F'(\eta))\,  d\xi \wedge d\eta \\
   &= L \int_{-1}^1 (F(\eta) - \eta F'(\eta))\, d\eta \\
   &= 2L \int_{-1}^1 F(\eta)\, d\eta - L\eta F(\eta)\big|_{\eta=-1}^{\eta=1} \\
   &= 4L \int_0^1 F(\eta)\, d\eta - (m+n)L^2 - 2L \int_\Gamma \cos\beta\, d\beta \wedge ds \, ,
\intertext{where we also used the fact that $F$ is an even function. For the contact volume of the complement of $\tilde A$, by Lemma~\ref{lemma:flowsaturationset}, we obtain}
  \vol(T^1\Oo \setminus \tilde A)\ &= \vol( \{ u\in T^1\Oo \,|\, |K(u)| \ge r(s_0) \} ) \\[0.5ex]
  &= 2 \vol( \{ u \in T^1\Oo \,|\, K(u) \ge r(s_0) \} ) \\[0.5ex]
  &= 2 \vol( \R/2\pi\Z \times \Gamma ) \\[0.5ex]
  &= 2 \int\limits_{\R/2\pi\Z \times \Gamma} r(s)\, d\theta \wedge d\beta \wedge ds = 4\pi \int_\Gamma r(s)\, d\beta \wedge ds \, ,
  \end{align*}
   where we used expression \eqref{eq:hilbertcontactform} for the Hilbert contact form. The claim follows by adding the two identities above.
 \end{proof}

We conclude this section by proving the following statement which characterizes rotationally symmetric Besse spindle orbifolds in terms of their generating function.

\begin{prop}\label{prop:besse_iff_F_const}
 The generating function is constant if and only if $\Oo$ is Besse.
\end{prop}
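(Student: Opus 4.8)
The plan is to translate both implications into statements about the first return dynamics on the Birkhoff annulus $A$ and then to account for the geodesics that avoid $A$. For the implication that a Besse metric has constant generating function, I would use that by Lemma~\ref{lemma:uniqueequator} there is a single equator, which in the normalised coordinates of Proposition~\ref{thm:tannery_orbifold} is the global maximum of $r$; consequently $|K|<r(s_0)$ for every non-equatorial geodesic, so all of them meet $A$ by Lemma~\ref{lemma:flowsaturationset}. Proposition~\ref{thm:tannery_orbifold} says each such geodesic makes $\tfrac{m+n}{2-\alpha}$ revolutions while oscillating $(1+\alpha)$ times, hence exactly $\tfrac{m+n}{2}$ revolutions per return to $A$ in either parity; since $\dot\theta$ has the sign of $\cos\beta=-\eta$, this means $W(\eta)=-\tfrac{m+n}{2}$ for $\eta>0$ and $W(\eta)=+\tfrac{m+n}{2}$ for $\eta<0$. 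Feeding these values into Lemma~\ref{lemma:windingnumber_generatingfct} gives $F'(\eta)=0$ throughout $(-1,1)$, so $F$ is constant.

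For the converse I would start from $F'\equiv0$, so that every $\eta$ is a critical point and, by Corollary~\ref{rmk:critpointsgeneratingfct}, every geodesic meeting $A$ closes up. The essential difficulty is that the flow saturation of $A$ is only $\{|K|<r(s_0)\}$ by Lemma~\ref{lemma:flowsaturationset}: if $r$ possessed a further critical point, then geodesics with $|K|\ge r(s_0)$ would oscillate in a region where $r>r(s_0)$ and never enter $A$, so constancy of $F$ would say nothing about them. To close this gap I would exploit that, by Lemma~\ref{lemma:gen_fct_return_mapandtime}, constancy of $F$ forces the return time $\tau(\eta)=F(\eta)$ to be constant, and in particular bounded as $\eta\to\pm1$. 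A direct evaluation of $\tau(\eta)=2\int_{s_1}^{s_2} r\,(r^2-K^2)^{-1/2}\,ds$ over one oscillation (with turning points $s_1<s_2$) shows that the limit as $|K|\to r(s_0)$ is finite when $s_0$ is a local maximum of $r$ (the turning points collapse onto $s_0$), but diverges logarithmically when $s_0$ is a local minimum (the integrand blows up at the interior point $s_0$, where $r\to r(s_0)=|K|$, so the geodesics linger near $P_{s_0}$ and become asymptotic to it). Boundedness of $\tau$ therefore rules out the minimum case, so $s_0$ is a local maximum; since $r(s_0)$ is the smallest critical value, any further critical point would force an intermediate local minimum of strictly smaller critical value, which is impossible. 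Hence $s_0$ is the unique critical point and the global maximum of $r$, every non-equatorial geodesic satisfies $|K|<r(s_0)$ and thus meets $A$ and is closed, and together with the closed equator and the meridians (the case $\eta=0$, where $F'(0)=0$ always) this makes every geodesic closed, i.e.\ $\Oo$ is Besse.

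The step I expect to be hardest is precisely this identification of the minimal-length equator with the global maximum of $r$. Because $F$ only records the geodesics lying in the flow saturation of $A$, constancy of $F$ must be converted into global information about the shape of $r$; the mechanism is the blow-up of the first return time near an equator that is a local minimum, which contradicts the identity $\tau\equiv F$ being finite and thereby excludes any hidden family of oscillating geodesics confined to a higher region of $r$.
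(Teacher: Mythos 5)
Your forward implication is correct and is essentially the paper's argument in different clothing: the paper deduces from Proposition~\ref{thm:tannery_orbifold} that the first return of any point of $A$ is the point itself (for $m+n$ even) or its antipode (for $m+n$ odd), so that $F'(\eta)\in L\Z$, and then uses smoothness and oddness of $F'$ to force $F'\equiv 0$; you instead extract the winding number $W(\eta)=\mp\tfrac{m+n}{2}$ per return (sign opposite to that of $\eta$) from the same proposition and insert it into Lemma~\ref{lemma:windingnumber_generatingfct}. Both routes rest on the same structure theorem and are equally valid. Your converse also follows the paper's skeleton: all geodesics meeting $A$ are closed by Corollary~\ref{rmk:critpointsgeneratingfct}, the crux is to show that $s_0$ is a local maximum of $r$, after which minimality of the critical value $r(s_0)$ gives that $s_0$ is the unique critical point and Lemma~\ref{lemma:flowsaturationset} finishes. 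But you prove the crux differently: the paper linearizes the geodesic equations along $P_{s_0}$, citing \cite[Proof of Lemma~4.7]{abbondandolo18}, where the fact that all geodesics through the equator return to their initial vector forces the linearized Poincar\'e map of the equator to be the identity and hence $r''(s_0)<0$; you instead use boundedness of the return time $\tau=F$ together with the asymptotics of $\tau(\eta)=2\int_{s_1}^{s_2}r\,(r^2-K^2)^{-1/2}\,ds$ as $|K|\to r(s_0)$.

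Your mechanism is sound, but the case analysis has a genuine gap. A critical point of a smooth function need not be a local maximum or a local minimum: $r$ could have a monotone inflection at $s_0$, or be locally constant near $s_0$, so ``boundedness of $\tau$ rules out the minimum case, hence $s_0$ is a local maximum'' is a non sequitur. Moreover, the positive half of your dichotomy is false as stated: at a \emph{degenerate} local maximum, say $r(s_0)-r(s)\sim c(s-s_0)^4$, the turning points do collapse onto $s_0$, yet the integral diverges like $\big(r(s_0)-|K|\big)^{-1/4}$. What your argument actually needs --- and what is true --- is that $\tau(\eta)$ stays bounded as $\eta\to\pm 1$ \emph{only} when $s_0$ is a nondegenerate local maximum; the divergence in all the remaining configurations (minimum, inflection, degenerate maximum, flat piece) does hold and can be established, e.g.\ by a dyadic decomposition of the integral near $s_0$, but this uniform statement is precisely the missing content and is not supplied by the two model computations you describe. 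The paper's linearization bypasses the issue entirely, giving $r''(s_0)<0$ in one step for all configurations. Your final uniqueness step is fine once nondegeneracy is known: a second critical point would produce an interior minimum of $r$ with critical value strictly below $r(s_0)$, contradicting minimality.
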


\begin{proof}
 Again, we adapt the proof of the corresponding statement for smooth surfaces \cite[Lemma~4.7]{abbondandolo18} to the orbifold case.

Assume that $\Oo$ is Besse. Then, for a point $(\eta,\xi)\in A$ and for $(m+n)$ even, the point of first recurrence is $(\eta,\xi)$ itself, whereas for $(m+n)$ odd, it is the antipode $(\xi+\frac{L}{2},\xi)$. By Lemma~\ref{lemma:gen_fct_return_mapandtime}, this implies (independently of the parity of $(m+n)$) that $F'(\eta)\in L\Z$ for all $\eta\in\R/L\Z$. Consequently, as $F'$ is smooth, it is forced to be a constant integer multiple of $L$. Moreover, as $F'$ is odd, it has to be constantly zero. This proves that $F$ has to be constant. 

Now, assume that $F$ is constant and therefore have $F'(\eta)=0$. By Corollary~\ref{rmk:critpointsgeneratingfct}, for $\eta\in(-1,1)$ the corresponding geodesics are closed and thus in particular periodic. Consequently, so are the geodesics corresponding to $\eta=\pm 1$, that is, the equators. We note that so far we have shown all geodesics with starting vector in $A$ to be closed.

 One can now show that the fact that all geodesics starting on the equator $P_{s_0}$ are periodic implies that $r$ attains a local maximum at $s_0$. This is done by linearizing the geodesic equations \eqref{eq:geodesiceqations} at $P_{s_0}$ (we refer to our reference \cite[Proof of Lemma~4.7]{abbondandolo18} for details). Because $P_{s_0}$ was chosen as an equator of minimal length, we conclude that it is the unique equator of $\Oo$. By Lemma~\ref{lemma:flowsaturationset}, the only orbits of the geodesic flow that do not meet $A$ are those which parametrize the equator $P_{s_0}$. As those are closed curves as well, together with the preceding paragraph, we have shown $\Oo$ to be Besse.
\end{proof}

\section{Proof of the systolic inequalities}\label{sec:systolicineq}

With our preliminary considerations from the preceding sections, we are now able to prove our main result.

\begin{proof}[Proof of Theorem A]
 By Proposition~\ref{prop:systolic_ratio_besse} we only need to show that if $\Oo$ is not Besse, then its systolic ratio is less than $2(m+n)\pi$. That is, we have to show that there is a closed geodesic $\gamma$ whose lift to the unit tangent bundle is contractible such that
 \begin{align*}
  L(\gamma)^2 < \mathrm{area}(\Oo) \rho_\text{contr}(\Oo_\text{Besse}) = \frac{1}{2\pi} \vol(T^1\Oo) \rho_\text{contr}(\Oo_\text{Besse}) = (m+n)\vol(T^1\Oo) \, ,
 \end{align*}
 where we have used the relation \eqref{eq:areacontactvolume} between the Riemannian area of $\Oo$ and the contact volume of $T^1\Oo$. As in Section~\ref{sec:generatingfunction_prop} we let $A$ be the Birkhoff annulus associated to an equator $P_{s_0}$ of minimal radius $r(s_0)$ among all equators. By Corollary~\ref{rmk:critpointsgeneratingfct_contractiblegeodesic} to show the existence of a geodesic as described above it is sufficient to show the existence of a critical point $\eta_0$ of $F$ with critical value $\mu:=F(\eta_0)$ satisfying
 \begin{align*}
  \mu^2 < \frac{m+n}{4} \vol(T^1\Oo) \, .
 \end{align*}
 From Lemma~\ref{lemma:contactvolume} we know that
 \begin{align*}
  \vol(T^1\Oo) \ge 4L \int_0^1 F(\eta)\, d\eta - (m+n)L^2 + \int_\Gamma (4\pi r(s) - 2L \cos\beta)\, d\beta \wedge ds \, ,
 \end{align*}
 where
 \begin{align*}
  \Gamma := \{ (\beta,s) \in {\textstyle \left( - \frac \pi 2, \frac \pi 2 \right) } \times (0,M) \,|\, K(\beta,s) \ge r(s_0) \} \, .
 \end{align*}
 The Clairaut integral is given by $K(\beta,s) = r(s)\cos(s)$ and so we have $r(s) \ge r(s_0)$ on $\Gamma$. Consequently, we obtain
 \begin{align*}
  4\pi r(s) - 2L \cos\beta \ge 4\pi r(s_0) - 2L\cos\beta = 2L - 2L\cos\beta \ge 0 \quad \text{on } \Gamma \, ,
 \end{align*}
 and thus the second integral in the expression for the contact volume of $T^1\Oo$ is bounded from below by zero. This gives us the inequality
 \begin{align}\label{eq:contactvolume_ineq}
  \vol(T^1\Oo) \ge 4L \int_0^1 F(\eta)\, d\eta - (m+n)L^2 \, .
 \end{align}
 By Proposition~\ref{prop:iterates_contractible} we know that the $(m+n)$-fold iterate of the equator lifts to a contractible curve in $T^1\Oo$. Thus, we can assume the inequality
 \begin{align}\label{eq:equatorlengthbound}
  (m+n)^2 L^2 \ge (m+n)\vol(T^1\Oo)
 \end{align}
 to hold, for otherwise the equator would already be the curve we are trying to find. Combining inequalities \eqref{eq:contactvolume_ineq} and \eqref{eq:equatorlengthbound} we have arrived at the following bound for the integral over the generating function:
 \begin{align}\label{eq:generatingfunctionintegral}
  \int_0^1 F(\eta)\, d\eta \le \frac{m+n}{2} L \, .
 \end{align}
Since is $\Oo$ is not Besse, we recall from Proposition~\ref{prop:besse_iff_F_const} that $F$ is not constant. As a continuous function, $F$ attains a minimum on $[-1,1]$. Furthermore, we know that $F$ is positive and from Lemma~\ref{lemma:generating_function_bound} we get
 \begin{align*}
  F(1) \ge \frac{m+n}{2}L \, .
 \end{align*}
 This together with \eqref{eq:generatingfunctionintegral} implies that the minimum of $F$ on $[0,1]$ is attained for some $\eta_0\in[0,1)$ and takes a value $\mu=F(\eta_0)$ in the interval $(0,\frac{m+n}{2}L)$. Because $F$ is an even function, this minimum has to be a minimum of $F$ on $[-1,1]$ as well and therefore a critical value of $F|_{(-1,1)}$. From Lemma~\ref{lemma:generating_function_bound} and the fact that $\mu$ is the minimal value of $F$, we conclude
 \begin{align*}
  F(\eta) \ge \max \left\{\mu, \frac{m+n}{2}L\eta \right\} \quad \forall \eta\in[0,1] \, .
 \end{align*}
 Moreover, the above inequality has to be strict at $\eta_0 = \frac{2}{m+n} \frac{\mu}{L}$ as $F$ is a differentiable function. It follows that
\begin{align*}
 \int_0^1 F(\eta)\, d\eta &> \int_0^1 \max \left\{\mu, \frac{m+n}{2}L\eta \right\}\, d\eta \\
 &= \mu + \frac 12 \left( 1 - \frac{2}{m+n}\frac{\mu}{L} \right) \left( \frac{m+n}{2}L - \mu \right) = \frac{m+n}{4}L + \frac{1}{m+n}\frac{\mu^2}{L} \, .
\end{align*}
 Using this inequality together with \eqref{eq:contactvolume_ineq}, we get
\begin{align}\label{eq:contactvolumebound}
 \vol(T^1\Oo) \ge 4L \int_0^1 F(\eta)\, d\eta - (m+n)L^2 > \frac{4}{m+n}\mu^2 \, ,
\end{align}
 which is what we wanted to show.
\end{proof}

The proof of Theorem B has the same structure with some minor modification. We provide a condensed version for the convenience of the reader.

\begin{proof}[Proof of Theorem B]
 We first prove that $\rho_{\text{contr},2}$ for a rotationally symmetric Besse spindle orbifold is given as stated: From Corollary~\ref{thm:tannery_orbifold}, we know that the length of the equator is $2\pi$ and it has winding number $1$. Moreover, every other geodesic is closed with length $(m+n)\pi$ and has winding number $\frac{m+n}{2}$. Then according to Remark~\ref{rmk:windingnumber_homotopyclass}, the lift of any closed geodesic other than the equator to $T^1\Oo$ represents an element in the subgroup of the fundamental group of order $2$; the equator has to be iterated $\frac{m+n}{2}$ times for its lift to do so as well. Consequently, we have $\ell_{\text{min},k} = (m+n)\pi$. Together with the area of a Besse orbifold \eqref{eq:besseorbifold_area}, the expression for $\rho_{\text{contr},2}$ follows.

 Conversely, we have to show that if $\Oo$ is not Besse, then there is a closed geodesic $\gamma$ whose lift to $T^1\Oo$ represents an element in the subgroup of the fundamental group of order $2$ such that
 \begin{align*}
  L(\gamma)^2 &< \mathrm{area}(\Oo) \rho_{\text{contr},2}(\Oo_\text{Besse}) = \frac{m+n}{4}\vol(T^1\Oo) \, .
 \end{align*}
 Again, let $A$ be the Birkhoff annulus associated to an equator $P_{s_0}$ of minimal radius $r(s_0)$ among all equators. By virtue of Corollary~\ref{rmk:critpointsgeneratingfct_contractiblegeodesic} it suffices to find a critical point $\eta_0$ of $F$ such that
 \begin{align*}
  F(\eta_0)^2 < \frac{m+n}{4}\vol(T^1\Oo) \, .
 \end{align*}
 Inequality \eqref{eq:contactvolume_ineq} holds as before. If we again denote by $L$ the length of the equator, we can assume the inequality
 \begin{align*}
  \left(\frac{m+n}{2}L\right)^2 \ge \frac{m+n}{4}\vol(T^1\Oo)
 \end{align*}
 to hold, for otherwise the equator would already be the desired curve. Combining those two inequalities, again we obtain \eqref{eq:generatingfunctionintegral}. From here on we can conclude in the same way as in the proof of Theorem A.
\end{proof}

We are left to prove Theorem C. Before proving the bound, we readily compute the systolic ratio $\rho_{\frac{m+n}{2-\alpha}}(\Oo)$ for a Besse spindle orbifold $\Oo$ of type $S^2(m,n)$.
By Proposition~\ref{thm:tannery_orbifold} there are $\frac{m+n}{2-\alpha}-1$ closed geodesics shorter than the minimal common period $\frac{2(m+n)\pi}{2-\alpha}$ of all geodesics, namely the iterates of the equator of length $2\pi$. Consequently, we have $\tau_{\frac{m+n}{2-\alpha}} = \frac{2(m+n)\pi}{2-\alpha}$. Together with $\mathrm{area}(\Oo)=2\pi(m+n)$, see \eqref{eq:besseorbifold_area}, we obtain
\begin{align*}
 \rho_{\frac{m+n}{\alpha-2}}(\Oo) = \frac{2(m+n)\pi}{(2-\alpha)^2} \, .
\end{align*}
The bound is again obtained by the same arguments as in the proof of Theorem A:

\begin{proof}[Proof of Theorem C]
 We are left to prove that if $\Oo$ is not Besse, then we can find $\frac{m+n}{2-\alpha}$ sufficiently short closed geodesics, in the sense that their squared length satisfies
\begin{align*}
 \text{length}^2 < \mathrm{area}(\Oo) \rho_{\frac{m+n}{2-\alpha}}(\Oo) = \frac{\vol(T^1\Oo)}{2\pi} \rho_{\frac{m+n}{2-\alpha}}(\Oo) = \frac{m+n}{(2-\alpha)^2} \vol(T^1\Oo) \, .
\end{align*}  
Note that $\frac{1}{2-\alpha} = \frac{1+\alpha}{2}$. Hence, by Corollary~\ref{rmk:critpointsgeneratingfct_contractiblegeodesic} and the rotational symmetry it is sufficient to find a critical point $\eta_0$ of $F$ such that
\begin{align*}
 F(\eta_0)^2 < \frac{m+n}{4} \vol(T^1\Oo) \, .
\end{align*}
At the same time, we can assume that the first $\frac{m+n}{2-\alpha}$ iterates of the equator do not satisfy the desired length bound for otherwise we were done. That is, we can assume
\begin{align*}
 \frac{(m+n)^2}{(2-\alpha)^2} L^2 \ge \frac{m+n}{(2-\alpha)^2} \vol(T^1\Oo) \quad\Leftrightarrow\quad L^2 \ge \frac{\vol(T^1\Oo)}{m+n} \, ,
\end{align*}
where again $L$ denotes the length of the equator. We realize that the inequalities obtained so far coincide with those in the proof of Theorem A. As such, the rest of the argument carries over.
\end{proof}

\bibliographystyle{alpha}
\addtocontents{toc}{\protect\partbegin}
\bibliography{biblio}

\begin{thebibliography}{ABHSa21}

\bibitem[AB19]{Abbondandolo:2019tl}
A.~Abbondandolo and G.~Benedetti.
\newblock On the local systolic optimality of {Z}oll contact forms.
\newblock arXiv:1912.04187, 2019.

\bibitem[ABHS17]{abbondandolo17}
A.~Abbondandolo, B.~Bramham, U.~L. Hryniewicz, and P.~A.~S. Salomão.
\newblock A systolic inequality for geodesic flows on the two-sphere.
\newblock {\em Math. Ann.}, 367(1-2):701--753, 2017.

\bibitem[ABHS18]{abbondandolo18_2}
A.~Abbondandolo, B.~Bramham, U.~L. Hryniewicz, and P.~A.~S. Salomão.
\newblock Sharp systolic inequalities for {R}eeb flows on the three-sphere.
\newblock {\em Invent. Math.}, 211(2):687--778, 2018.

\bibitem[ABHSa21]{abbondandolo18}
A.~Abbondandolo, B.~Bramham, U.~L. Hryniewicz, and P.~A.~S. Salom\~{a}o.
\newblock Sharp systolic inequalities for {R}iemannian and {F}insler spheres of
  revolution.
\newblock {\em Trans. Amer. Math. Soc.}, 374(3):1815--1845, 2021.

\bibitem[ALM21]{Abbondandolo:2021tl}
A.~Abbondandolo, C.~Lange, and M.~Mazzucchelli.
\newblock Higher systolic inequalities for 3-dimensional contact manifolds.
\newblock arXiv:2107.12138, 2021.

\bibitem[ALR18]{Amann18}
M.~Amann, C.~Lange, and M.~Radeschi.
\newblock Odd-dimensional orbifolds with all geodesics closed are covered by
  manifolds, 2018.
\newblock arXiv:1811.10320; To appear in Math. Ann.

\bibitem[APB14]{paiva14}
J.~C. \'{A}lvarez Paiva and F.~Balacheff.
\newblock Contact geometry and isosystolic inequalities.
\newblock {\em Geom. Funct. Anal.}, 24(2):648--669, 2014.

\bibitem[Bes78]{besse78}
A.~L. Besse.
\newblock {\em Manifolds all of whose geodesics are closed}, volume~93 of {\em
  Ergebnisse der Mathematik und ihrer Grenzgebiete}.
\newblock Springer-Verlag, Berlin-New York, 1978.

\bibitem[BK21]{Benedetti:2021aa}
G.~Benedetti and J.~Kang.
\newblock A local contact systolic inequality in dimension three.
\newblock {\em J. Eur. Math. Soc. (JEMS)}, 23(3):721--764, 2021.

\bibitem[Bri07]{brin07}
M.~G. Brin.
\newblock Seifert fibered spaces: Notes for a course given in the spring of
  1993, 2007.
\newblock arXiv:0711.1346.

\bibitem[Cro88]{croke88}
C.~B. Croke.
\newblock Area and the length of the shortest closed geodesic.
\newblock {\em J. Differential Geom.}, 27(1):1--21, 1988.

\bibitem[GG82]{gromoll81}
D.~Gromoll and K.~Grove.
\newblock On metrics on {$S^{2}$} all of whose geodesics are closed.
\newblock {\em Invent. Math.}, 65(1):175--177, 1981/82.

\bibitem[GUW09]{Guillemin09}
V.~Guillemin, A.~Uribe, and Z.~Wang.
\newblock Geodesics on weighted projective spaces.
\newblock {\em Ann. Global Anal. Geom.}, 36(2):205--220, 2009.

\bibitem[JN83]{jankins83}
M.~Jankins and W.~D. Neumann.
\newblock {\em Lectures on Seifert manifolds}, volume~2.
\newblock Brandeis University, 1983.

\bibitem[Lan20a]{lange20}
C.~Lange.
\newblock On metrics on 2-orbifolds all of whose geodesics are closed.
\newblock {\em J. Reine Angew. Math.}, 758:67--94, 2020.

\bibitem[Lan20b]{lange20_orb}
C.~Lange.
\newblock Orbifolds from a metric viewpoint.
\newblock {\em Geom. Dedicata}, 209:43--57, 2020.

\bibitem[Mos57]{Moster:1957}
P.~S. Mostert.
\newblock On a compact {L}ie group acting on a manifold.
\newblock {\em Ann. of Math. (2)}, 65:447--455, 1957.

\bibitem[RW17]{radeschi17}
M.~Radeschi and B.~Wilking.
\newblock On the {B}erger conjecture for manifolds all of whose geodesics are
  closed.
\newblock {\em Invent. Math.}, 210(3):911--962, 2017.

\bibitem[Sul78]{Sullivan:1978}
Dennis Sullivan.
\newblock A foliation of geodesics is characterized by having no ``tangent
  homologies''.
\newblock {\em J. Pure Appl. Algebra}, 13(1):101--104, 1978.

\bibitem[Tau07]{Taubes:2007wi}
C.~H. Taubes.
\newblock The {S}eiberg-{W}itten equations and the {W}einstein conjecture.
\newblock {\em Geom. Topol.}, 11:2117--2202, 2007.

\bibitem[Zol03]{Zoll:1908}
O.~Zoll.
\newblock {\"U}ber {F}l\"{a}chen mit {S}charen geschlossener geod\"{a}tischer
  {L}inien.
\newblock {\em Math. Ann.}, 57(1):108--133, 1903.

\end{thebibliography}

\end{document}